\newtheorem{thm}{Theorem}[section]
\newtheorem{lem}[thm]{Lemma}
\newtheorem{cor}[thm]{Corollary}
\newtheorem{prop}[thm]{Proposition}  
\theoremstyle{remark}
\theoremstyle{definition}
\newtheorem{defn}[thm]{Definition}
\newtheorem{rem}[thm]{Remark} 
\newtheorem{eg}[thm]{Examples}
\newtheorem{prob}[thm]{Problem}
\newtheorem{def/prop}[thm]{Definition/Proposition}
\newtheorem{thmx}{Theorem}
\newtheorem{thmy}{Theorem}
\newtheorem{py}[thmy]{Problem\!} 
\numberwithin{equation}{section}
\def\tr{\mathop{\mathrm{tr}}\nolimits}
\def\det{\mathop{\mathrm{det}}\nolimits}
\def\Im{\mathop{\mathrm{Im}}\nolimits}
\def\Ker{\mathop{\mathrm{Ker}}\nolimits}
\def\Hom{\mathop{\mathrm{Hom}}\nolimits}
\def\Gal{\mathop{\mathrm{Gal}}\nolimits}
\def\SL{\mathop{\mathrm{SL}}\nolimits}
\def\GL{\mathop{\mathrm{GL}}\nolimits}
\def\M{\mathop{\mathrm{M}}\nolimits}
\def\mod{\mathop{\mathrm{mod}}\nolimits}
\newcommand{\mf}[1]{{\mathfrak{#1}}}
\newcommand{\bb}[1]{{\mathbb{#1}}}
\newcommand{\mca}[1]{{\mathcal{#1}}}
\newcommand{\bs}[1]{{\boldsymbol{#1}}}
\newcommand{\inj}{\hookrightarrow}
\newcommand{\surj}{\twoheadrightarrow}
\newcommand{\congto}{\overset{\cong}{\to}}
\newcommand{\iffu}{\underset{\rm iff}{\iff}}
\newcommand{\Z}{\bb{Z}}
\newcommand{\Zp}{\bb{Z}_{p}}
\newcommand{\Q}{\bb{Q}}
\newcommand{\R}{\bb{R}}
\newcommand{\C}{\bb{C}}
\newcommand{\F}{\bb{F}}
\newcommand{\der}{\partial}
\newcommand{\ol}{\overline}
\newcommand{\ul}{\underline}
\newcommand{\tc}[1]{\textcircled{\scriptsize {#1}}}
\newcommand{\ds}{\displaystyle}
\newcommand{\wt}[1]{{\widetilde{#1}}}
\DeclareMathOperator*{\restprod}%
 {\mathchoice{\ooalign{\ensuremath{\displaystyle\prod}\crcr\ensuremath{\displaystyle\coprod}}}%
             {\ooalign{\ensuremath{\textstyle\prod}\crcr\ensuremath{\textstyle\coprod}}}%
             {\ooalign{\ensuremath{\scriptstyle\prod}\crcr\ensuremath{\scriptstyle\coprod}}}%
             {\ooalign{\ensuremath{\scriptscriptstyle\prod}\crcr\ensuremath{\scriptscriptstyle\coprod}}}%
 }
\newcommand{\pmx}[1]{\begin{pmatrix}#1\end{pmatrix}}
\newcommand{\spmx}[1]{{\small \pmx{#1}}}
\def\be { \begin{equation} }
\def\ee { \end{equation} }
\title[Non-acyclic ${\rm SL}_2$-representations of twists knots] 
{Non-acyclic ${\rm SL}_2$-representations of twist knots, $-3$-Dehn surgeries, and $L$-functions}
\author{Ryoto Tange} 
\email{rtange.math@gmail.com}
\address{Department of Mathematics, School of Education, Waseda University\\ 
1-104, Totsuka-cho, Shinjuku-ku, 169-8050, Tokyo, Japan} 
\author{Anh T. Tran} 
\email{att140830@utdallas.edu}
\address{Department of Mathematical Sciences, The University of Texas at Dallas\\ 
Richardson, TX 75080, USA} 
\author{Jun Ueki} 
\email{uekijun46@gmail.com}
\address{Department of Mathematics, School of System Design and Technology, Tokyo Denki University\\ 
5 Senju Asahi-cho, Adachi-ku, 120-8551, Tokyo, Japan}
\subjclass[2010]{Primary 57M25, 57M27; Secondary 11S99, 11T99 
} 
\keywords{twist knot, Reidemeister torsion, Dehn surgery, non-acyclic representation, $L$-function} 
\begin{document}
	
\begin{abstract} 
We study irreducible $\SL_2$-representations of twist knots. 
We first determine all non-acyclic $\SL_2(\C)$-representations, which turn out to lie on a line denoted as $x=y$ in $\R^2$. 
Our main tools are character variety, Reidemeister torsion, and Chebyshev polynomials. 
We also verify a certain common tangent property, which yields a result on the $L$-functions of universal deformations, that is, the orders of the associated knot modules. 
Secondly, we prove that a representation is on the line $x=y$ if and only if it factors through the $-3$-Dehn surgery, 
and is non-acyclic if and only if the image of a certain element is of order 3. 
Finally, we study absolutely irreducible non-acyclic representations $\ol{\rho}$ over a finite field with characteristic $p>2$, 
to concretely determine all non-trivial $L$-functions $L_{\bs \rho}$ of the universal deformations over complete discrete valuation rings. 
We show among other things that $L_{\bs \rho}$ $\dot{=}$ $k_n(x)^2$ holds for a certain series $k_n(x)$ of polynomials. 
\end{abstract} 

\ \\[-2cm] 

\maketitle

\setcounter{tocdepth}{3}
{\small 
\tableofcontents } 

\newpage 

\setcounter{section}{-1}
\section{Introduction} 
In this article, we study irreducible $\SL_2$-representations of twist knots to concretely determine all non-trivial $L$-functions of ${\rm SL}_2$-deformations, which may be seen as infinitesimal ${\rm SL}_2$-analogues of the classical Alexander polynomials, as well as to point out several interesting phenomena on the zeros of the acyclic torsion functions. 
We remark that the former half of this paper concerns ${\rm SL}_2(\C)$-representations, while the latter deals with representations over finite fields and certain profinite rings.  

In this introduction, we fix our conventions, state all theorems, outline our arguments in this article, and attach remarks. 
For each $n\in \Z$, \emph{the twist knot} $J(2,2n)$ is defined by the diagram below, 
the horizontal twists being right handed if $n$ is positive and left handed if negative. 
\begin{center} 
\includegraphics[width=4cm]{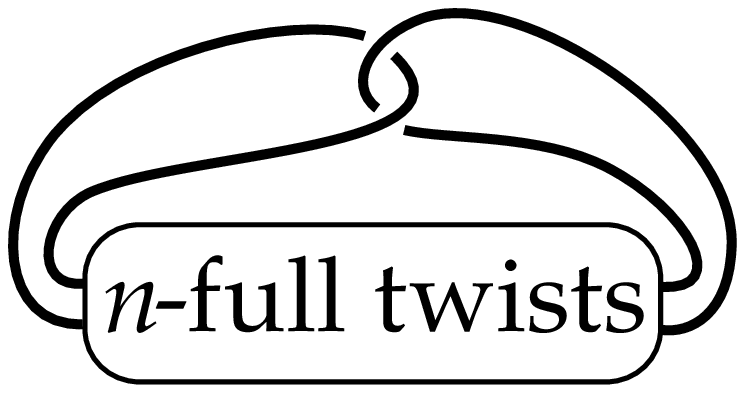} 
\end{center} 
We have $J(2,0)=0_1$ (unknot), $J(2,2)=3_1$ (trefoil), $J(2,4)=5_2$, and $J(2,-2)=4_1$ (figure-eight knot). 
Regarding a $1/2$-full twist to be a half twist, $J(2, -2n)$ and $J(2,2n+1)$ are the mirror images to each other, hence we only consider $J(2,2n)$.

The knot group $\pi_n:=\pi_1(S^3-J(2,2n))$ 
of $J(2,2n)$ admits the following presentation 
\cite[Proposition 1]{HosteShanahan2004JKTR}: 
\[\pi_n=\left<a,b \,|\, aw^n=w^nb\right>, \ \  	w=[a,b^{-1}]=ab^{-1}a^{-1}b.\]

Since twist knots are 2-bridge knots, the Culler--Shalen theory \cite{CullerShalen1983} together with Riley's calculation 
assures that conjugacy classes of $\rho\in \Hom(\pi_n, \SL_2(\C))$ are parametrized by 
$x:=\tr \rho(a)$ and $y:=\tr \rho(ab)$. 

A representation $\rho$ is said to be \emph{acyclic} if $H_i(\pi,\rho)=0$ holds for every $i$ and \emph{non-acyclic} if otherwise. 
In Section 1, we prove the following theorem. 
\begin{thmx}[(Non-acyclic representations)] \label{thm.na} \emph{
Conjugacy classes of non-acyclic irreducible $\SL_2(\C)$-representations of $J(2,2n)$ are exactly given by 
$x=y=1-2\cos\dfrac{2\pi l}{3n-1}$, $0<l \leq \dfrac{|3n-1|-1}{2}$, $l \in \Z$.}
\end{thmx}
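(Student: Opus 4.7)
The plan is to combine a Riley-type parametrization of the irreducible character variety $X^{\mathrm{irr}}(\pi_n)$ with a Fox-calculus criterion for acyclicity, and then reduce everything to an identity among Chebyshev polynomials.

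First I would set up the character variety. Using the standard Riley matrix form $\rho(a) = \begin{pmatrix} s & 1 \\ 0 & s^{-1} \end{pmatrix}$, $\rho(b) = \begin{pmatrix} s & 0 \\ -u & s^{-1} \end{pmatrix}$ with $x = s + s^{-1} = \tr \rho(a)$ and $y = \tr \rho(ab) = 2 - u$ (up to normalization), I would expand $\rho(w)^n$ for $w = [a, b^{-1}]$ via the Cayley--Hamilton identity $M^n = S_{n-1}(\tr M)\, M - S_{n-2}(\tr M)\, I$ with $S_k$ the Chebyshev polynomials of the second kind. Plugging this into the relator $a w^n = w^n b$ produces a single Riley polynomial $\phi_n(x,y) = 0$ cutting out $X^{\mathrm{irr}}(\pi_n)$.

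Next I would bring in the standard homological criterion: for an irreducible $\SL_2(\C)$-representation of a knot exterior, non-acyclicity of the twisted complex with $\C^2_\rho$-coefficients is equivalent to the vanishing at $t=1$ of the twisted Alexander polynomial $\Delta_\rho(t)$, equivalently to the vanishing of an explicit $2\times 2$ minor of the Fox Jacobian of the relator $a w^n b^{-1} w^{-n}$ evaluated at $\rho$. Computing the Fox derivatives $\partial_a r$ and $\partial_b r$ of the relator $r = a w^n b^{-1} w^{-n}$ and expanding them again through the Chebyshev identity produces a second polynomial condition $F_n(x,y) = 0$ whose intersection with $\phi_n = 0$ is the non-acyclic locus.

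The third and central step is to show that on $\phi_n=0$ the condition $F_n=0$ forces $x = y$. The expected mechanism is an explicit factorization, modulo $\phi_n$, in which $(x-y)$ appears as a factor of $F_n$, with the complementary factor being a Chebyshev polynomial in $\tr \rho(w)$ that is nonzero under the appropriate normalization. Geometrically, this is where the $\Z/2$-involution on $X^{\mathrm{irr}}(\pi_n)$ coming from the strong involution of the 2-bridge knot, reflected in the $(x,y)$-coordinates, is expected to play its role.

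Finally, restricting to the diagonal $x = y$, the Riley polynomial $\phi_n(x,x)$ becomes a one-variable polynomial that, after Chebyshev simplification, reduces to a factor of the form $S_{3n-2}$ evaluated at a linear expression in $x$. Using $S_k(2\cos\theta) = \sin((k+1)\theta)/\sin\theta$ and discarding the roots corresponding to reducible representations (i.e., $x=2$ and any symmetric redundancy $l \leftrightarrow (3n-1) - l$), the remaining roots are precisely $x = 1 - 2\cos(2\pi l /(3n-1))$ for $0 < l \leq (|3n-1|-1)/2$. The main obstacle I expect is Step 3: extracting the factor $(x-y)$ from the Fox-Jacobian minor modulo $\phi_n$ is a delicate Chebyshev identity, and while it is strongly suggested by the symmetry of the $J(2,2n)$ presentation, the actual bookkeeping of Chebyshev-polynomial relations is where the computation is concentrated.
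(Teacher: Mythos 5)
Your overall architecture is the same as the paper's --- parametrize irreducible characters by $(x,y)=(\tr\rho(a),\tr\rho(ab))$ via the Riley/Chebyshev expansion of $\rho(w)^n$, detect non-acyclicity by vanishing of the torsion (equivalently of the twisted Alexander polynomial at $t=1$), and finish with a Chebyshev root count --- but two of your steps do not go through as written. Your Step 3 is left as a hope, and the mechanism you guess is not the one that works: the common zero locus of $f_n$ and $\tau_n$ is not obtained by exhibiting $(x-y)$ as a factor of the torsion modulo the Riley polynomial with a nonvanishing Chebyshev cofactor. Indeed, at every common zero the torsion vanishes to order \emph{two} along $f_n=0$ (Theorem \ref{thm.tangent}), so no such transverse factorization exists. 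What actually works is an elimination: from $f_n=0$ and $\tau_n=0$ one gets the linear relation $(y-2)(x^2-x-y)\mca{S}_n(z)=x-2$, and from $f_n=0$ together with the Pell-type identity $\mca{S}_{n+1}(z)^2+\mca{S}_n(z)^2-z\mca{S}_{n+1}(z)\mca{S}_n(z)=1$ (Lemma \ref{lem.Cheb2}) one gets $(1+(y-1)^2-z(y-1))\mca{S}_n(z)^2=1$; combining these yields $(z-2)(x-y)^2=0$. The cofactor $z-2=-(y-2)(x^2-y-2)$ is not automatically nonzero and must be excluded by a separate case analysis using both equations (Lemma \ref{lem.neq}, which also disposes of $n=0,1$ and $z=\pm2$). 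Without this elimination and exclusion your central step has no proof.

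Your Step 4 is actually incorrect. Writing $1-x=t+t^{-1}$ one finds $f_n(x,x)=-\mca{S}_{3n-1}(1-x)$, whose roots correspond to $t^{3n-1}=\pm1$, i.e.\ $x=1-2\cos\frac{l\pi}{3n-1}$ for \emph{all} $0<l<|3n-1|$. These are pairwise distinct irreducible characters: there is no redundancy $l\leftrightarrow(3n-1)-l$ (that involution sends $\cos\frac{l\pi}{3n-1}$ to its negative, hence to a different value of $x$), and none of these points is reducible. Only the roots with $t^{3n-1}=+1$, i.e.\ even $l$, are non-acyclic; the odd-$l$ roots are irreducible characters on the diagonal that are perfectly acyclic (for $n=2$, $f_2(x,x)=-(x^2-3x+1)(x^2-x-1)$ and only the factor $x^2-3x+1=k_2$ gives non-acyclic points, while $x^2-x-1=-g_2$ gives acyclic ones). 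To select the even-$l$ roots you must impose the torsion condition a second time, now restricted to $x=y$: it becomes $(x^2-2x)\mca{S}_n(z)=1$, equivalently $(t^2+t^{-2}+1)(t^{3n}-t^{-3n})=t^3-t^{-3}$, which holds when $t^{3n-1}=1$ and fails when $t^{3n-1}=-1$ (the latter would force $t^6=1$, contradicting $z\neq\pm2$). Discarding only ``reducible'' roots, as you propose, would overcount the non-acyclic locus by roughly a factor of two.
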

This implies that every such representation corresponds to a point on the diagonal $x=y$ in $\R^2\subset \C^2$. 
In order to prove this assertion, we investigate the intersection of curves defined by Chebyshev-like polynomials $f_n(x,y)$, $\tau_n(x,y) \in \Z[x,y]$. 
The polynomial $f_n(x,y)$ defines a component of the character variety and coincides with the Riley polynomial $\Phi_n(x,u)$ via $-u=y-x^2+2$. 
The polynomial $\tau_n(x,y)$ is the Reidemeister torsion regarded as a function so that $\tau_n(x,y)=0$ iff a representation $\rho$ with $(\tr \rho(a),\tr\rho(ab))=(x,y)$ is non-acyclic. 
We first prove that the intersection of their zero set lies on $x=y$ and then determine all common roots of $f_n(x,x)$ and $\tau_n(x,x)$. 
We also introduce several Chebyshev-like polynomials $g_n,$ $h_n$, $k_n\in \Z[x]$ and prove $f_n(x,x)=g_nk_n$, $\tau_n(x,x)=h_nk_n$, where $k_n$ is the greatest common divisor. 
The roots of $g_n$ and $k_n$ are also determined and are given by ($3n-1$)-th roots of unity (cf. Proposition \ref{prop.roots} and Remark \ref{rem.3n-1}). 
We in addition prove the following theorem. 
\begin{thmx}[(The common tangent property)] \label{thm.tangent} \emph{
The two curves $f_n(x,y)=0$ and $\tau_n(x,y)=0$ in $\R^2$ have a common tangent line at every intersection point, 
while the second derivatives of their implicit functions do not coincide. 
In other words, every zero of $\tau_n(x,y)$ on $f_n(x,y)=0$ has multiplicity two in the function ring $\C[x,y]/(f_n(x,y))$.}
\end{thmx}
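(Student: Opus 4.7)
The plan is to work locally at each intersection point $P=(x_0,x_0)$, where by Theorem \ref{thm.na} the value $x_0$ runs over the roots of $k_n$, all simple by Proposition \ref{prop.roots}. I first record that $P$ is a smooth point of $f_n=0$: differentiating $f_n(x,x)=g_n(x)k_n(x)$ along the diagonal gives
\[
(\partial_x f_n+\partial_y f_n)(P)=g_n(x_0)k_n'(x_0)\ne 0,
\]
since $\gcd(g_n,k_n)=1$ forces $g_n(x_0)\ne 0$ while simplicity gives $k_n'(x_0)\ne 0$. The same argument applies to $\tau_n$, so the completed local rings on both curves at $P$ are DVRs.

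The main device is the auxiliary polynomial
\[
\Phi(x,y):=h_n(x)f_n(x,y)-g_n(x)\tau_n(x,y),
\]
whose restriction to the diagonal is identically zero. Hence $\Phi=(y-x)\phi(x,y)$ for a unique $\phi\in\C[x,y]$, and differentiating in $y$ and restricting to $y=x$ yields
\[
\phi(x,x)=h_n(x)(\partial_y f_n)(x,x)-g_n(x)(\partial_y\tau_n)(x,x).
\]
The crux of the proof is the divisibility
\[
k_n(x)\mid\phi(x,x)\qquad(\ast)
\]
in $\C[x]$, equivalently $h_n(x_0)(\partial_y f_n)(P)=g_n(x_0)(\partial_y\tau_n)(P)$ at every root $x_0$ of $k_n$. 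Combined with the diagonal relation $(\partial_x+\partial_y)f_n(P)/g_n(x_0)=(\partial_x+\partial_y)\tau_n(P)/h_n(x_0)=k_n'(x_0)$, the identity $(\ast)$ forces $\nabla f_n(P)$ and $\nabla\tau_n(P)$ to be proportional, which is the common tangent assertion.

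To upgrade to intersection multiplicity exactly two, I pass to the completed local ring $\mathcal{O}_P:=\C[[u,v]]/(f_n)$ with $u=x-x_0$, $v=y-x_0$. Since $(\partial_x+\partial_y)f_n(P)\ne 0$, the class of $y-x$ is a uniformizer of $\mathcal O_P$. A short Taylor expansion of $\phi$ around the diagonal together with $(\ast)$ gives $\phi\equiv(y-x)\eta\pmod{f_n}$ locally at $P$ for some $\eta\in\mathcal O_P$, so
\[
g_n\tau_n\equiv -(y-x)^2\eta\pmod{f_n}.
\]
Setting $(a,b)=\nabla f_n(P)$ and $(\alpha,\beta)=\nabla\phi(P)$, an explicit computation yields $\eta(P)=(a\beta-b\alpha)/(a+b)$, so $\mathrm{val}_P(\tau_n)=2$ precisely when $\nabla\phi(P)$ is not parallel to $\nabla f_n(P)$.

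The hard parts are establishing $(\ast)$ and the non-degeneracy $a\beta\ne b\alpha$; both require input beyond the diagonal restriction. I would derive $(\ast)$ either by direct manipulation of the explicit Chebyshev-type recursions for $f_n$ and $\tau_n$ produced in Section 1, or structurally via a Porti-type formula $\tau_n\doteq\partial f_n/\partial u$ along the character variety (where $u=x^2-y-2$), which produces the required proportionality of gradients at once. The non-degeneracy $a\beta\ne b\alpha$ would then follow by specializing these Chebyshev formulas at the $(3n-1)$-th roots of unity described in Remark \ref{rem.3n-1}.
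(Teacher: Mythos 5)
Your overall architecture is sound and runs parallel to the paper's: establish that the gradients of $f_n$ and $\tau_n$ are proportional at each intersection point (common tangent), then show a second-order non-degeneracy to pin the multiplicity at exactly two (your local-ring computation plays the role of the paper's Lemma \ref{lem.mult2}). But the proposal stops exactly where the theorem begins. Your two deferred claims --- the divisibility $(\ast)$, i.e.\ $h_n(x_0)\,\partial_y f_n(P)=g_n(x_0)\,\partial_y\tau_n(P)$ at every root $x_0$ of $k_n$, and the non-degeneracy $a\beta\neq b\alpha$ --- \emph{are} the common tangent property and the multiplicity-two statement, respectively; everything else in your write-up is formal bookkeeping around them. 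Saying you ``would derive $(\ast)$ either by direct manipulation of the Chebyshev recursions or via a Porti-type formula $\tau_n\doteq\partial f_n/\partial u$'' is a plan, not a proof: the Porti-type identity is nowhere established for these normalizations and would itself need an argument of comparable length, and ``direct manipulation'' is precisely the computation you have not done. The paper does this computation explicitly in Subsection \ref{subsec.tangent}: using $\dfrac{d\mca{S}_n}{dz}=\dfrac{(n-1)\mca{S}_{n+1}-(n+1)\mca{S}_{n-1}}{z^2-4}$ (Lemma \ref{lem.der}) together with the relations $\mca{S}_n(z)=\dfrac{1}{\alpha^2-2\alpha}$, $\mca{S}_{n-1}(z)=\dfrac{\alpha-1}{\alpha^2-2\alpha}$ valid at a non-acyclic point, it obtains closed forms for all four partials in terms of $n$ and $\alpha$, reads off that the two implicit slopes are literally the same rational expression, and computes $\dfrac{d^2}{dx^2}(y_f-y_\tau)\big|_{x=\alpha}=\dfrac{-(3n-1)^2(\alpha+1)\alpha(\alpha-2)(\alpha-3)}{(n\alpha^2-2n\alpha-1)^3}\neq0$ using $\alpha\neq-1,0,2,3$ from Lemma \ref{lem.neq}. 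Without some version of that computation your argument has no content.

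Two smaller problems. First, your auxiliary polynomial $\Phi=h_nf_n-g_n\tau_n$ does not vanish on the diagonal: since $\tau_n(x,x)=2h_nk_n$, you get $\Phi(x,x)=-g_nh_nk_n$; you need $\Phi=2h_nf_n-g_n\tau_n$ (harmless, but it propagates through your formula for $\phi(x,x)$). Second, your smoothness claim for the curve $\tau_n=0$ at $P$ rests on $h_n(x_0)\neq0$ for every root $x_0$ of $k_n$; this does not follow from $k_n$ being the GCD of $f_n(x,x)$ and $\tau_n(x,x)$ (a common root of $h_n$ and $k_n$ would make $x_0$ a double root of $\tau_n(x,x)$ but still a simple root of $f_n(x,x)$, leaving the GCD unchanged). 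In the paper this non-vanishing is only an empirical observation (Remark \ref{rem.tangent}(ii)); it is a consequence of the explicit formula $(\partial_x+\partial_y)\tau_n(\alpha,\alpha)=\dfrac{2(3n-1)}{(\alpha+1)(\alpha-2)(\alpha-3)}$, i.e.\ again of the computation you have omitted.
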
 
At the end of Section 1, we define $\tau=\tau_n(x,y_f(x))=\tau_n(x_f(y),y)$ 
by using the implicit functions $y=y_f(x)$ and $x=x_f(x)$ of $f_n(x,y)=0$ around a non-acyclic representation 
and give explicit formulas of the 2nd derivatives (Corollary \ref{cor.d2}). 

In Section 2, we address a characterization problem of non-acyclic representations. 
We first prove the following theorem on the $-3$-Dehn surgeries, which is known to be exceptional. 
\begin{thmx}[($-3$-Dehn surgery)] \label{thm.Dehn} \emph{
The conjugacy class of an irreducible $\SL_2(\C)$-representation $\rho$ of $J(2,2n)$ is on the line $x=y$ if and only if $\rho$ factors through the $-3$-Dehn surgery. }
\end{thmx}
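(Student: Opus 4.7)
The plan is to reduce the theorem to an explicit matrix computation using Riley's parameterization. For an irreducible non-abelian $\SL_2(\C)$-representation $\rho$ of $\pi_n = \langle a,b \mid aw^n = w^n b\rangle$, I would put
\[
\rho(a) = \pmx{s & 1 \\ 0 & s^{-1}}, \qquad \rho(b) = \pmx{s & 0 \\ -u & s^{-1}},
\]
with $x = s + s^{-1}$ and $u = x^2 - 2 - y$, so that $\rho$ is well-defined precisely when $f_n(x,y) = 0$. Since $\mu = a$ and $\lambda$ generate the peripheral $\Z^2$ and $\rho(\mu)$ is non-central for irreducible $\rho$, the image $\rho(\mu^{-3}\lambda)$ lies in the torus centralizing $\rho(\mu)$; outside the degenerate locus $s = \pm 1$ (i.e. $x = \pm 2$, which is treated separately), the condition $\rho(\mu^{-3}\lambda) = I$ reduces to the scalar equation $\tr\rho(\mu^{-3}\lambda) = 2$. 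Thus $\rho$ factors through the $-3$-surgery quotient $\pi_n / \langle\!\langle \mu^{-3}\lambda \rangle\!\rangle$ precisely when this trace identity holds on the character curve $f_n(x,y) = 0$.

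The heart of the argument is then to compute $\tr\rho(\mu^{-3}\lambda)$ as a polynomial in $x, y$ modulo $f_n(x,y)$. For this I would use an explicit word expression for the preferred longitude of $J(2,2n)$, of the form $\widetilde{w^n}\,w^n\,a^{-2e}$ with $\widetilde w$ the reverse of the word $w$ and $e$ the exponent sum making $\lambda$ null-homologous, and evaluate $\rho(w^n)$ recursively using the Chebyshev polynomials $S_{n-1}(\tr\rho(w))$ in the spirit of Section~1. The target identity is
\[
\tr\rho(\mu^{-3}\lambda) - 2 \; \equiv \; (x-y)\, Q_n(x,y) \pmod{f_n(x,y)},
\]
with $Q_n$ non-vanishing on the irreducible component of the character variety. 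Such a factorization immediately yields both directions of the theorem.

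An independent cross-check is provided by Theorem~\ref{thm.na} together with the explicit factorization $f_n(x,x) = g_n\,k_n$ worked out in Section~1: the zero set of $x-y$ on $f_n = 0$ consists of the explicit collection of points parameterized by $(3n-1)$-th roots of unity, and these should match exactly the irreducible representations of the $-3$-Dehn filling. Once the polynomial identity above is verified, one can close the argument by a finite comparison of characters on either side.

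The main obstacle I foresee is the Chebyshev bookkeeping needed to produce this factorization. Computing $\rho(\lambda)$ directly yields matrix entries that are rational expressions in $s$ and $u$ of degree growing with $n$, and the factor $x-y$ is not apparent from the raw formulas. The resolution will rely on the recursive structure of $f_n$ developed in Section~1 and on a careful reduction of the trace polynomial modulo $f_n$ that isolates $(x-y)$; the parabolic case $x = \pm 2$ must be handled by direct inspection, and is in fact excluded from the $x=y$ locus since $\cos(2\pi l/(3n-1)) = -1/2$ has no integer solution $l$ with $3n-1 \not\equiv 0 \pmod 3$.
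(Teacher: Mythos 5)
Your overall strategy --- reduce to the scalar condition $\tr\rho(a^{-3}\lambda_a)=2$ on the curve $f_n(x,y)=0$ and then establish a polynomial factorization of $\tr\rho(a^{-3}\lambda_a)-2$ --- is reasonable, and the reduction step (simultaneous diagonalizability of $\rho(a)$ and $\rho(\lambda_a)$ away from the degenerate eigenvalues) is exactly what the paper uses in its alternative argument. But the proposal has two genuine problems. First, the entire content of the theorem is packed into the ``target identity'' $\tr\rho(a^{-3}\lambda_a)-2\equiv(x-y)Q_n(x,y)\pmod{f_n}$ with $Q_n$ nowhere vanishing, and you do not prove it: you explicitly defer the Chebyshev computation that would produce it, and you give no mechanism for showing that the cofactor $Q_n$ is nonzero at every point of the curve, which is a global statement comparable in difficulty to the results of Section 1. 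Second, the identity is false as stated. Writing $\rho(a)=\spmx{M&1\\0&M^{-1}}$ and $\rho(\lambda_a)=\spmx{L&*\\0&L^{-1}}$, one has $\tr\rho(a^{-3}\lambda_a)-2=(L-M^3)^2/(LM^3)$, and the cusp relation $u=\frac{(1-M^2)(1-L)}{L+M^2}$ gives $L-M^3=\frac{(1+M)M^2(y-x)}{u+1-M^2}$, so the trace defect vanishes to order \emph{two} along $x=y$, not one. (This order-two phenomenon is exactly what Proposition \ref{prop.b} records: $\mf{d}_{n,n}(x,x)-2=(x-2)(x+1)^2f_n(x,x)^2$, with the square on $f_n$.) A search for a first-order factorization with non-vanishing cofactor will not terminate.

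The paper avoids the Chebyshev bookkeeping altogether: it quotes the Hoste--Shanahan eigenvalue relation $u=\frac{(1-M^2)(1-L)}{L+M^2}$ (Proposition \ref{prop.(5.9)}), from which $L=M^3$ is equivalent to $u=x^2-x-2$, i.e.\ to $y=x$, in two lines of algebra; the only remaining work is to rule out the degenerate eigenvalues ($M+1\neq0$ and $M^4\neq1$), which is done in Lemmas \ref{lem.M+1} and \ref{lem.M4neq1}. If you want to carry out a computation in the spirit of your plan, Subsection \ref{subsec.DIP} shows what it takes even for the single direction ``$x=y$ implies factoring through the surgery'': one must introduce the doubly-indexed family $\mf{d}_{m,n}=\tr a^{-3}\ol{w}^mw^n$ and run a two-index induction, and the output is an identity along the diagonal $x=y$ only, not a congruence modulo $f_n(x,y)$ on the whole character curve.
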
 
In the proof, we invoke Riley's explicit representation and the relation formula of the eigenvalues at the meridian and the longitude. 
We also give an alternative proof for the only if part of Theorem \ref{thm.Dehn}, which is clearly workable over $\Z$, hence over a field with positive characteristic,  
by introducing doubly-indexed Chebyshev-like polynomials $\mf{d}_{m,n}(x,y)\in \Z[x,y]$ with $\mf{d}_{n,n}(x,x)-2=(x-2)(x+1)^2f_n(x,x)^2$ (Proposition \ref{prop.b}). 

We further prove the following theorem on a characterization of non-acyclic representations. 
\begin{thmx}[(Characterization)] \label{thm.order3} \emph{
The conjugacy class of an irreducible $\SL_2(\C)$-representation $\rho$ of $J(2,2n)$ on $x=y$ is non-acyclic if and only if $\rho(a^{-1}w^n)$ is of order 3. } 
\end{thmx}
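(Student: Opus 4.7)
The plan is to compute $c_n := \tr\rho(a^{-1}w^n)$ directly on the line $x=y$ via Riley's parametrization, matching the condition $c_n = -1$ (equivalent to order three in $\SL_2(\C)$) with the non-acyclic locus of Theorem~\ref{thm.na}. Using Riley's normal form $\rho(a) = \pmx{M & 1 \\ 0 & M^{-1}}$, $\rho(b) = \pmx{M & 0 \\ -u & M^{-1}}$ with $x = M + M^{-1}$, the condition $x = y$ forces $u = (x-2)(x+1)$, and a direct matrix calculation gives $t := \tr\rho(w) = 2 - (x-2)^2(x+1)$. By the Cayley--Hamilton identity in $\SL_2(\C)$, $c_n$ satisfies the recursion $c_n = t\,c_{n-1} - c_{n-2}$, and the initial values $c_0 = \tr\rho(a)^{-1} = x$ and $c_1 = \tr(\rho(a)^{-1}\rho(w)) = x$ (a short matrix computation) yield $c_n = x\,P_n(t)$, with $P_n(2\cos\phi) = \cos((2n-1)\phi/2)/\cos(\phi/2)$.

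The crucial step is the reparametrization $x = 1 - 2\cos\theta$, under which the triple-angle formula gives $t = 2\cos(3\theta)$. Combining the closed form for $P_n$ with $\cos(3\theta/2) = (2\cos\theta - 1)\cos(\theta/2) = -x\cos(\theta/2)$ and a product-to-sum simplification, I obtain
\[
c_n + 1 \;=\; \frac{2\sin((3n-1)\theta/2)\sin((3n-2)\theta/2)}{\cos(\theta/2)}.
\]
The ``only if'' direction is immediate: at a root $x = 1 - 2\cos(2\pi l/(3n-1))$ of $k_n$ with $0 < l \leq (|3n-1|-1)/2$ as in Theorem~\ref{thm.na}, $\sin((3n-1)\theta/2) = \sin(\pi l) = 0$ while $\cos(\theta/2) \neq 0$; thus $c_n = -1$ and $\rho(a^{-1}w^n)$ has order three.

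For the converse, the identity implies $k_n(x) \mid c_n(x) + 1$ in $\Z[x]$; write $c_n(x) + 1 = k_n(x)\,q(x)$. The roots of $q$ are the reducible point $x = -1$ (the $l = 0$ case) together with the $(3n-2)$-th cyclotomic-related values of $x$ coming from the $\sin((3n-2)\theta/2)$ factor; any apparent zero at $x = 3$ (arising when $|3n-1|$ is even as a $0/0$ of the trigonometric expression) is spurious, since direct polynomial evaluation via $P_n(-2) = (-1)^{n+1}(2n-1)$ gives $c_n(3) = 3(-1)^{n+1}(2n-1) \neq -1$. By Proposition~\ref{prop.roots}, the roots of $g_n$ are controlled by the $(3n-1)$-th root-of-unity structure; together with $\gcd(3n-1, 3n-2) = 1$ and the exclusion of the reducible point $x = -1$ from the irreducible character variety $f_n(x,x) = 0$, this ensures $q$ and $g_n$ share no root on the character variety. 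Therefore $c_n = -1$ combined with $g_n(x)k_n(x) = 0$ forces $k_n(x) = 0$, so $\rho$ is non-acyclic.

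The main obstacle is the coprimality argument for the converse: verifying that the ``extra'' factor $q$ of $c_n(x) + 1$ avoids $g_n$'s roots on the character variety relies on the precise root structure of $g_n$ from Proposition~\ref{prop.roots}, and on reconciling the apparent $0/0$ singularities of the trigonometric expression with the underlying polynomial identity $c_n = x P_n(t)$.
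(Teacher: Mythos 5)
Your proposal is correct and follows essentially the same route as the paper: both reduce the order-three condition to $\tr\rho(a^{-1}w^n)=-1$ and compute this trace on $x=y$ through the Chebyshev recursion and the substitution $1-x=2\cos\theta$ (the paper's $1-x=t+t^{-1}$), matching its $-1$-locus with the $t^{3n-1}=1$ points of Theorem~\ref{thm.na}. The only real divergence is in the converse, where the paper simply notes that $c_n=\mp1$ at \emph{every} root of $f_n(x,x)$ according to $t^{3n-1}=\pm1$, which renders your coprimality argument between $(x+1)k_{-n+1}$ and $g_n$ unnecessary (your factorization $c_n+1=(x+1)k_nk_{-n+1}$ appears in the paper only as a closing remark).
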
 

Our study is indeed motivated by a problem in arithmetic topology. 
In Section 3, based on our study of non-acyclic representations, 
we investigate the $L$-functions of universal deformations of residual representations. These $L$-functions were introduced in \cite{KMTT2018} in a perspective of the Hida--Mazur theory, and 
may be seen as infinitesimal ${\rm SL}_2$-analogues of the classical Alexander polynomials (cf. Remark \ref{rem.L}). 
Let $\ol{\rho}:\pi_n\to \SL_2(F)$ be a representation over a finite field $F$ with characteristic $p>2$ and let $O$ be a complete discrete valuation ring (CDVR) of characteristic zero with the residue field $F$. 
A \emph{deformation} (or a \emph{lift}) of $\ol{\rho}$ over a complete local $O$-algebra $R$ is a representation $\rho:\pi_n\to\SL_2(R)$ with the residual representation $\ol{\rho}$. A \emph{universal deformation} $\bs \rho:\pi_n\to \SL_2(\mca{R})$ of $\ol{\rho}$ 
over complete local $O$-modules (we also write ``over $O$'' for simplicity) 
is a deformation such that any deformation over any $R$ uniquely factors through $\bs \rho$ up to strict equivalence. 
If $\bs{\rho}$ is absolutely irreducible, then $\bs \rho$ uniquely exists up to $O$-isomorphism and strict equivalence. 

When $\mca{R}$ is a Noetherian UFD and the group homology $H_1(\pi_n,\bs{\rho})$ with local coefficients is a finitely generated torsion $\mca{R}$-module, the $L$-function $L_{\bs \rho} \in \mca{R}/$ $\dot{=}$ is defined to be the order of $H_1(\pi_n,\bs{\rho})$, where $\dot{=}$ denotes the equality up to multiplication by units in $\mca{R}$. 
Let $\Delta_{{\bs \rho},i}(t)$ denote the $i$-th $\bs \rho$-twisted Alexander polynomials. 
Then we have $L_{\bs \rho}$ $\dot{=}$ $\Delta_{{\bs \rho},1}(1)$. 
A general theory of twisted invariants yields $L_{\bs \rho} $ $\dot{=}$ $ \tau_{\bs \rho} \Delta_{{\bs\rho},0}(0)$. 
For most cases we have $\Delta_{{\bs\rho},0}$ $\dot{=}$ $1$, so that we have $L_{\bs \rho}$ $\dot{\neq}$ $1$ if and only if $\tau_{\ol{\rho}}=0$, that is, $\ol{\rho}$ is non-acyclic. 
Now B.~Mazur's Question 2 in \cite[page 440]{Mazur2000} may be varied as follows: 
\begin{py} \label{q} 
\emph{
Investigate the $L$-functions $L_{\bs\rho}$ of the universal deformations $\bs \rho$ over $O$ of absolutely irreducible non-acyclic residual representations $\ol{\rho}$.} 
\end{py} 

We completely answer this problem for the case of twist knots. Indeed,  
we will determine all residual representations with non-trivial $L$-functions, as well as explicitly determine the $L$-functions themselves and exhibit various examples. 
Among other things, we prove the following theorem, whose precise version is given as  Proposition \ref{prop.nap} (an analogue of Theorem \ref{thm.na} over a finite field) and Theorem \ref{thm.L.precise}. 

\begin{thmx} \label{thm.L} \emph{Notations being as above, every absolutely irreducible representation $\ol{\rho}:\pi_n\to \SL_2(F)$ of a twist knot corresponds to a root of $k_n$ in $F$.} 

\emph{
Suppose that $\ol{\rho}$ corresponds to a root $\ol{\alpha}$ of $k_n$ in $F$ with multiplicity $m$ and that 
$\alpha_1=\alpha,$ $\alpha_2, \cdots, \alpha_m$ are distinct lifts of $\ol{\alpha}$ with $k_n(\alpha_i)=0$ and $\alpha\in O$. 
If $\dfrac{\der f_n}{\der y}(\ol{\alpha},\ol{\alpha})\neq 0$ holds, so that there is a universal deformation ${\bs \rho}:\pi_n\to \SL_2(O[[x-\alpha]])$ over $O$, then the equalities 
\[L_{\bs \rho} \,\dot{=}\, k_n(x)^2 \,\dot{=}\, \prod_i (x-\alpha_i)^2\] 
in $\mca{R}=O[[x-\alpha]]$ hold. 
If in addition $p\nmid 3n-1$, then $m=1$ and $L_{\bs \rho}$ $\dot{=}$ $ (x-\alpha)^2$ holds. }

\emph{
If instead $\dfrac{\der f_n}{\der x}(\ol{\alpha},\ol{\alpha})\neq 0$, then a similar equality holds in $\mca{R}=O[[y-\alpha]]$. } 
\end{thmx}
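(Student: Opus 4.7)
My plan is to reduce the $L$-function to the torsion polynomial $\tau_n$ evaluated along the formal branch of $f_n=0$ supplied by the implicit function theorem, and then to compare the result with $k_n(x)^2$ using the common tangent property (Theorem \ref{thm.tangent}) together with Hensel lifting.

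For the first assertion I would establish Proposition \ref{prop.nap} by adapting the proof of Theorem \ref{thm.na}: the key polynomials $f_n,\tau_n\in \Z[x,y]$ and the decomposition $f_n(x,x)=g_nk_n$, $\tau_n(x,x)=h_nk_n$ in $\Z[x]$ persist after reduction mod $p>2$. Since the intersection of $\bar f_n=0$ and $\bar\tau_n=0$ still lies on $x=y$ by the same Chebyshev-polynomial identities used in Theorem \ref{thm.na}, the non-acyclic absolutely irreducible residual representations are in bijection with the roots of $\bar k_n$ in $F$.

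For the deformation-theoretic content, the hypothesis $\partial f_n/\partial y(\bar\alpha,\bar\alpha)\neq 0$ combined with the formal implicit function theorem over the complete local ring $O$ produces a unique $y_f(x)\in O[[x-\alpha]]$ satisfying $y_f(\alpha)=\alpha$ and $f_n(x,y_f(x))=0$. Riley's explicit representation, now taken with entries in $\mca{R}=O[[x-\alpha]]$, furnishes a deformation $\bs\rho$ of $\bar\rho$ whose universality follows from $f_n$ cutting out the relevant component of the character variety together with absolute irreducibility of $\bar\rho$ in Mazur's framework. The same absolute irreducibility forces $\Delta_{\bs\rho,0}\,\dot{=}\,1$, so $L_{\bs\rho}\,\dot{=}\,\tau_{\bs\rho}=\tau_n(x,y_f(x))$ in $\mca{R}$.

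To identify this element with $k_n(x)^2$, I would argue that for every lift $\alpha_i\in\bar O$ of $\bar\alpha$ with $k_n(\alpha_i)=0$, both $(\alpha_i,\alpha_i)$ and $(\alpha_i,y_f(\alpha_i))$ lie on the unique formal branch of $f_n=0$ through $(\bar\alpha,\bar\alpha)$ guaranteed by transversality, forcing $y_f(\alpha_i)=\alpha_i$ and hence $\tau_n(\alpha_i,y_f(\alpha_i))=h_n(\alpha_i)k_n(\alpha_i)=0$. Theorem \ref{thm.tangent}, applied over $K=\mathrm{Frac}(O)$ and extended to $\bar K$, upgrades each such vanishing to order exactly two. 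Hensel lifting factors $k_n(x)\,\dot{=}\,\prod_i(x-\alpha_i)$ in $\mca{R}$, so that $k_n(x)^2$ is a Weierstrass element of degree $2m$ with the same root data in $\bar O$ as $\tau_n(x,y_f(x))$. Corollary \ref{cor.d2} then verifies that the leading quadratic coefficient of $\tau_n(x,y_f(x))$ at $x=\alpha$ lies in $O^\times$, so Weierstrass preparation delivers $\tau_n(x,y_f(x))\,\dot{=}\,k_n(x)^2$ in $\mca{R}$. When $p\nmid 3n-1$, the identification of the roots of $k_n$ with certain $(3n-1)$-th roots of unity (Remark \ref{rem.3n-1}) keeps $\bar k_n$ separable, forcing $m=1$ and $L_{\bs\rho}\,\dot{=}\,(x-\alpha)^2$. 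The parallel case $\partial f_n/\partial x(\bar\alpha,\bar\alpha)\neq 0$ is handled by the symmetric implicit function $x_f(y)$ and the same argument with the roles of $x$ and $y$ exchanged.

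The hardest step is the last integral comparison: lifting the characteristic-zero multiplicity-two statement of Theorem \ref{thm.tangent} to $O[[x-\alpha]]$ while controlling the $\pi$-adic content of $\tau_n(x,y_f(x))$. In the ramified case $m>1$ arising when $p\mid 3n-1$, several lifts $\alpha_i$ collapse modulo $\pi$, and one must verify via the explicit second-derivative formulas of Corollary \ref{cor.d2} that no extra factor of $\pi$ appears in the quadratic term, so that the Weierstrass preparation of $\tau_n(x,y_f(x))$ has the same degree and unit leading coefficient as $k_n(x)^2$.
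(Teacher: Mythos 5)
Your overall architecture matches the paper's: reduce Theorem~\ref{thm.na} mod $p$ to get Proposition~\ref{prop.nap}, build $\bs\rho$ from the Hensel/implicit-function branch $y_f(x)$ and Riley's representation, kill $\Delta_{\bs\rho,0}$, identify $L_{\bs\rho}$ with $\tau_n(x,y_f(x))$, and finish with the tangency (Theorem~\ref{thm.tangent}) plus Weierstrass preparation. However, there is a genuine gap at exactly the step you flag as hardest. You claim that ``the leading quadratic coefficient of $\tau_n(x,y_f(x))$ at $x=\alpha$ lies in $O^\times$'' and that this delivers the Weierstrass preparation. By Corollary~\ref{cor.d2} that coefficient is, up to units, $\dfrac{(3n-1)^2}{(\alpha-2)(n\alpha^2-2n\alpha-1)^2}$, and the denominator is a unit under your hypotheses; so the coefficient is a unit \emph{only when} $p\nmid 3n-1$. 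In the ramified case $p\mid 3n-1$, $m>1$ --- the only case where anything beyond $(x-\alpha)^2$ happens --- it is divisible by $p^2$, consistent with the target answer $\prod_i(x-\alpha_i)^2$, whose $(x-\alpha)^2$-coefficient is $\prod_{i\ge 2}(\alpha-\alpha_i)^2\in\mf{m}_O$. (Compare the paper's example $n=-3$, $p=5$, where $\frac{d^2L_{\bs\rho}}{dx^2}|_{x=\alpha}=5$ and the Weierstrass degree is $4$.) So neither your main mechanism nor your fallback (``no extra factor of $\pi$ in the quadratic term'') can fix the Weierstrass degree. What is actually needed, and what the paper supplies, is (i) that the reduction $\tau_n(x,\ol{y}_f(x))\in F[[x-\ol\alpha]]$ is a \emph{nonzero} power series, so $L_{\bs\rho}$ is not divisible by $p$ and is $\dot{=}$ a Weierstrass polynomial (Lemma~\ref{lem.tauk}, Lemma~\ref{lem.order2}); and (ii) that the zeros of $\tau_n(x,y_f(x))$ in the residue disk are \emph{exactly} $\alpha_1,\dots,\alpha_m$ (Corollary~\ref{cor.tauk}, via Theorem~\ref{thm.na} over an algebraic closure of ${\rm fr}\,O$), each of multiplicity two by Lemma~\ref{lem.mult2}/Lemma~\ref{lem.order2}. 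Your proposal asserts the ``same root data'' but never establishes (i), and (i) cannot be read off from the second derivative at the single point $\alpha$.

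Two smaller remarks. First, your reduction of Theorem~\ref{thm.na} mod $p$ is not quite ``the same identities'': the exclusion $z\neq-2$ in Lemma~\ref{lem.neq} genuinely fails when $p\mid 3n-1$ (one gets $x=y=3$ with $k_n(3)=\pm\frac{3n-1}{2}=0$), and the paper's Proposition~\ref{prop.nap} has to treat this case separately; the conclusion survives, but the case analysis is needed. Second, your derivation of $\Delta_{\bs\rho,0}\,\dot{=}\,1$ directly from absolute irreducibility (via $H_0(\pi_n,\ol\rho)=0$ and Nakayama) is a legitimate and arguably cleaner alternative to the paper's computation with the $2$-minors of $\der_1$ and $k_n(2)=\pm1$; that part is fine.
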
 

Since the character variety and the Reidemeister torsion are indeed defined over $\Z$, our arguments over $\C$ are mostly applicable over any integral domain. 
An analogue of Theorem \ref{thm.na} over $F$ determines all representations with nontrivial $L$-functions and that of 
Theorem \ref{thm.tangent} over $\mca{R}$ proves that every zero of the $L$-function is of order two. 

We remark that the common tangent property is also observed for non-acyclic representations of many other two-bridge knots, 
so that 
we may expect further geometric back ground for the order two phenomenon of the $L$-functions.


Our work in this article is derived from the scope of the following dictionary of the analogy between knots and prime numbers (cf.~\cite{MorishitaTerashima2007, MTTU, KMTT2018}, \cite[Chapter 14]{Morishita2012}).\\[-7mm] 
\begin{center}
\begin{tabular}{|c||c|}
\hline Low dimensional topology & Number theory\\ 
\hline \hline 
Deformation space of hyperbolic structures & Universal $p$-ordinary modular deformation space\\
\hline 
Dehn surgery points with $\Z$-coefficient & Arithmetic points \\
\hline 
\end{tabular}
\end{center} 
Arithmetic interpretations of 
exceptional surgeries and the diagonal $x=y$ would be of interest in our future study, 
while Theorem \ref{thm.order3} would help to investigate the images of representations. 
Our work would indeed live in the complement of the dictionary above in an extended picture. 
See also Remarks \ref{rem.Benard}, \ref{rem.Dehn}, \ref{rem.Brieskorn}, and \ref{rem.final}.\\

This article is organized as follows. 
All theorems are already stated in this introduction. 
In Section 1, we prove Theorems A and B on non-acyclic representations of twist knots.
In Subsections 1.1--1.3, we recall the Chebyshev polynomials $\mca{S}_n$, the defining polynomials $f_n(x,y)$ of the character varieties, and the Reidemeister torsions $\tau_n(x,y)$ respectively, and prepare several lemmas and propositions. 
In Subsection 1.4, we study the common roots of $f_n$ and $\tau_n$ to prove Theorem \ref{thm.na}. 
In Subsection 1.5, we introduce several Chebyshev-like polynomials $g_n$, $h_n$, and $k_n$ in $\Z[x]$ to describe the common divisors of $f_n(x,x)$ and $\tau_n(x,x)$. 
In Subsection 1.6, we calculate the slopes of tangent lines of the curves $f_n=0$ and $\tau_n=0$ to verify Theorem \ref{thm.tangent}. 
We also investigate the derivatives of $\tau=\tau_n(x,y_f(x))=\tau_n(x_f(y),y)$. 
In Subsection 1.7, we attach the tables of our polynomials and graphs with some observational remarks. 

In Section 2, we address a characterization problem of non-acyclic representations and prove Theorems \ref{thm.Dehn} and \ref{thm.order3}. 
In Subsection 2.1, we recall the Dehn surgery and remark some related results. 
In Subsection 2.2, we prove Theorem C by using eigenvalues method. 
In Subsection 2.3, we introduce doubly-indexed Chebyshev polynomials $\mf{d}_{m,n}$ to partially give an alternative proof of Theorem C. 
In Subsection 2.4, we recall Riley's representation and attach an outline of another partial proof. 
In Subsection 2.5, we introduce another Chebyshev-like polynomial $c_n$ to prove Theorem D and give a remark.

In Section 3, we study the $L$-functions of universal deformations. 
In Subsection \ref{subsec.twisted}, we overview the definitions and the relationships of Reidemeister torsions and twisted Alexander invariants. 
In Subsection \ref{subsec.resrep}, we study absolutely irreducible residual representations over a finite field $F$ with characteristic $p>2$. 
We prepare several fundamental tools and establish a version of Theorem \ref{thm.na} over $F$. 
In Subsection \ref{subsec.univdefo}, we investigate universal deformations ${\bs \rho}$ and point out the relationship between $\tau_{\bs \rho}$ and $\tau_n(x,y)$. 
In Subsection \ref{subsec.L}, we determine all residual representations with non-trivial $L$-functions, as well as determine the $L$-functions themselves. 
We obtain a precise version of Theorem \ref{thm.L} as a consequence of Theorem \ref{thm.tangent}. 
In Subsection \ref{subsec.eg}, we give a systematic study of examples of residual representations with non-trivial $L$-functions.

\section{Non-acyclic representations} 
\subsection{Chebyshev polynomials} 
We define the Chebyshev polynomial $\mca{S}_n(z)\in \Z[z]$ of the second kind for each $n\in \Z$ by $\mca{S}_n(2\cos \theta)=\dfrac{\sin n\theta}{\sin \theta}$ with $\theta \in \R$, or equivalently, by $\mca{S}_0(z)=0$, $\mca{S}_{\pm 1}(z)=\pm1$, 
and the bi-monic Chebyshev recurrence formula $\mca{S}_{n+1}(z)-z \mca{S}_n(z)+\mca{S}_{n-1}(z)=0$. 
We in addition have $\mca{S}_{\pm2}(z)=\pm z,$ $\mca{S}_{\pm3}(z)=\pm (z^2- 1)$, $\ldots$ . 
(We prefer to use $\mca{S}_n$ instead of $S_n=\mca{S}_{n+1}$ in the references, since the symmetricity $\mca{S}_{-n}=-\mca{S}_n$ simplifies our argument.) 
We may easily verify the following lemmas.  

\begin{lem} \label{lem.Cheb} 
{\rm (1)} We have $\mca{S}_n(\pm2)=(\pm1)^{n-1}n$ for every $n\in \Z$. 
Suppose $z\neq \pm 2$ and $z=s+s^{-1}$ for some $s\in \C^*$. Then we have $\mca{S}_n(z)=(s^n-s^{-n})/(s-s^{-1})$ for every $n\in \Z$. 

{\rm (2)} Let $R$ be any commutative ring with $z\in R$ and suppose that $(p_n)_n \in R^\Z$ satisfies the recurrence formula $p_{n+1}-zp_n+p_{n-1}=0$. 
Then we have $p_n=p_1\mca{S}_n(z)-p_0\mca{S}_{n-1}(z)=p_0\mca{S}_{n+1}(z)-p_{-1}\mca{S}_n(z)$ for every $n\in \Z$.
\end{lem}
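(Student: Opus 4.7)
The plan is to prove all three formulas by direct verification against the bimonic recurrence $p_{n+1} - z p_n + p_{n-1} = 0$, which I shall refer to as $(\ast)$. Both parts reduce to the general principle that any sequence satisfying $(\ast)$ is determined by two consecutive values, combined with the base cases $\mca{S}_0 = 0$, $\mca{S}_{\pm 1} = \pm 1$, $\mca{S}_{\pm 2}(z) = \pm z$.

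For the first identity in part (1), I would induct on $|n|$. The base cases $n = 0, \pm 1$ follow directly from the defining values and agree with $(\pm 1)^{n-1} n$. The inductive step amounts to checking that the sequence $n \mapsto (\pm 1)^{n-1} n$ itself satisfies $(\ast)$ at $z = \pm 2$, which is an elementary identity; the symmetry $\mca{S}_{-n} = -\mca{S}_n$ handles negative indices simultaneously. For the second identity, where $z \neq \pm 2$ and $z = s + s^{-1}$ with $s \neq \pm 1$, I would set $q_n := (s^n - s^{-n})/(s - s^{-1})$. A short expansion of $(s+s^{-1})(s^n - s^{-n})$ shows that $(q_n)$ satisfies $(\ast)$, and the initial values $q_0 = 0$, $q_1 = 1$ match those of $\mca{S}_n(z)$, so induction in both directions on $n \in \Z$ gives $q_n = \mca{S}_n(z)$ throughout.

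For part (2), I would observe that for any fixed $z \in R$, both shifted sequences $(\mca{S}_n(z))_n$ and $(\mca{S}_{n-1}(z))_n$ satisfy $(\ast)$, hence so does every $R$-linear combination. Evaluating the candidate $p_1 \mca{S}_n(z) - p_0 \mca{S}_{n-1}(z)$ at $n = 0$ and $n = 1$ yields $p_0$ and $p_1$ respectively, using $\mca{S}_{-1}(z) = -1$, $\mca{S}_0(z) = 0$, $\mca{S}_1(z) = 1$. Since the recurrence $(\ast)$ is solvable in both directions over any commutative ring (the coefficients of $p_{n+1}$ and $p_{n-1}$ are units), a solution is uniquely determined by its values at two consecutive indices, and the first equality $p_n = p_1 \mca{S}_n(z) - p_0 \mca{S}_{n-1}(z)$ follows for all $n \in \Z$. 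The second equality is then obtained by applying the first identity to the shifted sequence $(p_{n-1})_n$, or equivalently by checking it directly at $n = 0, -1$.

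There is essentially no substantive obstacle: the statement is a standard package of identities about Chebyshev polynomials of the second kind. The only delicate point is careful bookkeeping of signs and index shifts, owing to the author's symmetric normalization $\mca{S}_{-1} = -1$ in place of the more common $\mca{S}_{-1} = 0$.
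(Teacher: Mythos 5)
Your proof is correct; the paper itself offers no proof of this lemma, merely stating that it ``may easily be verified,'' and your two-sided induction from the base values $\mca{S}_{-1}=-1$, $\mca{S}_0=0$, $\mca{S}_1=1$ together with the observation that the recurrence is uniquely solvable in both directions (leading coefficients being units) is exactly the routine verification the authors intend. All sign and index-shift checks you perform are accurate, including the reduction of the second formula in (2) to the first via the shift $n\mapsto n-1$.
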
 

\begin{lem} \label{lem.Cheb2}
We have $\mca{S}_{n+1}(z)^2+\mca{S}_n(z)^2-z\mca{S}_{n+1}(z)\mca{S}_n(z)=1$ for every $n\in \Z$. 
\end{lem}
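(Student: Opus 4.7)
The plan is to prove this by induction on $n \geq 0$ using the defining recurrence, and then extend to negative indices using the symmetry $\mca{S}_{-n} = -\mca{S}_n$ (which follows immediately from the recurrence and the initial values $\mca{S}_0=0$, $\mca{S}_{\pm 1}=\pm 1$). The identity has a pleasant form: if we let $Q_n(z) := \mca{S}_{n+1}(z)^2 + \mca{S}_n(z)^2 - z\,\mca{S}_{n+1}(z)\mca{S}_n(z)$, then the claim is $Q_n(z) \equiv 1$, and in fact it suffices to show $Q_{n+1}(z) = Q_n(z)$.

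The base case $n=0$ gives $Q_0 = 1^2 + 0^2 - z\cdot 1\cdot 0 = 1$, matching the claim. For the inductive step, I would rewrite
\[
Q_{n+1} - Q_n = \bigl(\mca{S}_{n+2}^2 - \mca{S}_n^2\bigr) - z\,\mca{S}_{n+1}\bigl(\mca{S}_{n+2} - \mca{S}_n\bigr) = \bigl(\mca{S}_{n+2} - \mca{S}_n\bigr)\bigl(\mca{S}_{n+2} + \mca{S}_n - z\,\mca{S}_{n+1}\bigr),
\]
and observe that the second factor vanishes identically by the Chebyshev recurrence $\mca{S}_{n+2} - z\mca{S}_{n+1} + \mca{S}_n = 0$. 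This proves $Q_{n+1} = Q_n$ for all $n \geq 0$, so $Q_n \equiv 1$ for all nonnegative $n$. Using $\mca{S}_{-n} = -\mca{S}_n$ one checks $Q_{-n-1}(z) = Q_n(z)$, which extends the identity to every $n \in \Z$.

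As an alternative sanity check, one can give a one-line proof on the open set where $z = s + s^{-1}$ with $s \neq \pm 1$: by Lemma \ref{lem.Cheb}\,(1), $(s - s^{-1})^2 Q_n(z)$ equals
\[
(s^{n+1} - s^{-n-1})^2 + (s^n - s^{-n})^2 - (s + s^{-1})(s^{n+1} - s^{-n-1})(s^n - s^{-n}),
\]
and a direct expansion collapses this to $(s - s^{-1})^2$. Since both sides of the claimed identity are polynomials in $z$ and agree on a Zariski dense set, they coincide in $\Z[z]$. No genuine obstacle is expected; the only care needed is the sign-tracking for negative $n$, which the symmetry $\mca{S}_{-n} = -\mca{S}_n$ handles uniformly.
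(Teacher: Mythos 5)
Your proof is correct: the telescoping identity $Q_{n+1}-Q_n=(\mca{S}_{n+2}-\mca{S}_n)(\mca{S}_{n+2}+\mca{S}_n-z\mca{S}_{n+1})=0$ together with $Q_0=1$ settles the claim, and both your induction and your $z=s+s^{-1}$ verification check out. The paper gives no proof at all (it dismisses this lemma with ``we may easily verify''), so your argument is exactly the kind of routine verification intended; the only remark worth making is that the recurrence holds for all $n\in\Z$, so the telescoping already covers negative indices and the symmetry $\mca{S}_{-n}=-\mca{S}_n$ is not strictly needed.
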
 


\subsection{Character variety} 
We recall the description of the $\SL_2(\C)$-character variety of $J(2,2n)$ with use of the Chebyshev polynomials by following \cite{NagasatoTran2016OJM}. 

For each $g\in \pi_n$, let $\tr g$ denote the map $\Hom(\pi_n, \SL_2(\C))\to \C; \rho\mapsto \tr\rho(g)$.  
Then we have $x=\tr a=\tr b$, $y=\tr ab =\tr ba$. We also put $z=\tr w=\tr ab^{-1}a^{-1}b$. 

\begin{lem} We have $z=2x^2-x^2y+y^2-2$, hence $z-2=-(y-2)(x^2-y-2)$. 
\end{lem}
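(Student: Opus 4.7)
The plan is to reduce the claim to a standard Fricke-type identity for traces in $\SL_2$. Recall that for any $X,Y\in \SL_2(\C)$ one has the two fundamental identities $\tr(X^{-1})=\tr(X)$ and $\tr(XY)+\tr(XY^{-1})=\tr(X)\tr(Y)$, together with the commutator trace formula
\[
\tr([X,Y]) \;=\; \tr(X)^2+\tr(Y)^2+\tr(XY)^2-\tr(X)\tr(Y)\tr(XY)-2.
\]
Since $w=ab^{-1}a^{-1}b=[a,b^{-1}]$, the strategy is simply to apply this commutator identity with the pair $(a,b^{-1})$ and to express everything back in terms of $x=\tr a=\tr b$ and $y=\tr(ab)$.

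More precisely, first I would observe that $\tr(b^{-1})=\tr(b)=x$, and that the identity $\tr(ab)+\tr(ab^{-1})=\tr(a)\tr(b)=x^2$ gives $\tr(ab^{-1})=x^2-y$. Substituting $(\tr X,\tr Y,\tr(XY))=(x,x,x^2-y)$ into the Fricke commutator formula yields
\[
z \;=\; x^2+x^2+(x^2-y)^2-x\cdot x\cdot(x^2-y)-2,
\]
and a short expansion collapses this to $z=2x^2-x^2y+y^2-2$, which is the first claim.

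For the second claim I would simply factor: subtracting $2$ gives $z-2=2x^2-x^2y+y^2-4$, and one checks directly by expansion that
\[
-(y-2)(x^2-y-2) \;=\; 2x^2-x^2y+y^2-4,
\]
finishing the proof. There is no real obstacle here; the only thing to be mindful of is the sign convention in the commutator (using $[a,b^{-1}]$ rather than $[a,b]$), which is harmless since the Fricke identity only involves the traces of $a$, $b^{-1}$, and $ab^{-1}$, all of which are expressible in $x$ and $y$ by the basic two-variable trace relations above.
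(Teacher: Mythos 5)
Your proof is correct and is essentially the same computation as the paper's: both reduce the claim to the elementary $\SL_2$ trace relations $\tr(X^{-1})=\tr X$ and $\tr(XY)+\tr(XY^{-1})=\tr X\,\tr Y$. The only cosmetic difference is that you package the calculation through the standard Fricke commutator formula for $\tr[X,Y]$ (applied to $(a,b^{-1})$ with $\tr(ab^{-1})=x^2-y$), whereas the paper expands $\tr(bab^{-1}a^{-1})$ step by step from those same relations; the factorization $z-2=-(y-2)(x^2-y-2)$ is a direct check in both cases.
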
 

\begin{proof} 
We have $\tr AB=\tr BA =\tr A \tr B -\tr AB^{-1}$ for any $A,B \in \SL_2(\C)$, hence 
$\tr A^{-1}=\tr A$ and $\tr A^2=(\tr A)^2-2$. These formulas yield 
$z=\tr w=\tr ab^{-1}a^{-1}b
=\tr bab^{-1}a^{-1}
=\tr ba \tr b^{-1}a^{-1} -\tr baab
=y^2-\tr a \tr ab^2+\tr b^2
=y^2-x (\tr ab \tr b-\tr a)+(x^2-2)
=y^2-x(xy-x)+(x^2-2)
=y^2-x^2y+2x^2-2
=2x^2-x^2y+y^2-2$. 
\end{proof} 

We put $f_n(x,y)=(y-1)\mca{S}_n(z)-\mca{S}_{n-1}(z)$ for each $n\in \Z$. 
Then we have $f_0=1$, $f_1=y-1$, and $f_{n+1}-zf_n+f_{n-1}=0$. 
The following proposition slightly refines the arrangement in 
\cite[(1.2), (1.3)]{NagasatoTran2016OJM} of Le's Theorem \cite[Theorem 3.3.1]{Le1993}. 
(Note that $K_{-n}=J(2,2n)$ holds for $K_{-n}$ in \cite{NagasatoTran2016OJM}.) 

\begin{prop} \label{prop.f} 
The $\SL_2(\C)$-character variety of $\pi_n$ is given by $(x^2-y-2)f_n(x,y)=0$. 
Namely, $(x,y)\in \C^2$ satisfies $(x^2-y-2)f_n(x,y)=0$ if and only if $(x,y)=(\tr \rho (a),\tr \rho(ab))$ holds for some representation $\rho$. 
Each conjugacy class of irreducible representations corresponds to each point on $f_n(x,y)=0$ with $x^2-y-2\neq0$. 
\end{prop}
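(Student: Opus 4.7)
The plan is to split the analysis into the irreducible and reducible loci, using Riley's normalization for the former and the abelianization for the latter.

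First I would fix the Riley form: up to conjugation in $\SL_2(\C)$, any irreducible representation $\rho$ of the free group $\langle a,b\rangle$ with $\tr\rho(a)=\tr\rho(b)=x$ can be placed as $\rho(a)=\spmx{M & 1 \\ 0 & M^{-1}}$, $\rho(b)=\spmx{M & 0 \\ -u & M^{-1}}$ with $x=M+M^{-1}$ and $u\in\C$; a direct computation of $\tr\rho(ab)$ then yields $u=x^2-y-2$, so irreducibility is equivalent to $x^2-y-2\neq 0$. Reducible characters therefore lie on $x^2-y-2=0$, and every such point is realized by the abelian representation $a,b\mapsto\mathrm{diag}(M,M^{-1})$ with $M+M^{-1}=x$, which trivially satisfies the knot group relation since it factors through the abelianization $\Z$ of $\pi_n$.

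Next, for the irreducible case, I would compute the entries of $\rho(w)$ explicitly in the Riley basis, verify $\tr\rho(w)=z$ in agreement with Lemma~1.2, and apply the Cayley--Hamilton identity $\rho(w)^n=\mca{S}_n(z)\rho(w)-\mca{S}_{n-1}(z)I$ from Lemma~1.1(2). The defining relation $\rho(a)\rho(w)^n-\rho(w)^n\rho(b)=0$ then becomes the matrix identity
\[
\mca{S}_n(z)\bigl(\rho(a)\rho(w)-\rho(w)\rho(b)\bigr)\;=\;\mca{S}_{n-1}(z)\bigl(\rho(a)-\rho(b)\bigr).
\]
The $(2,2)$-entries of both sides vanish identically, and the $(1,1)$-entries collapse to an identity that holds automatically from the explicit formulas for the entries of $\rho(w)$. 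The $(1,2)$- and $(2,1)$-entries each reduce to the single scalar equation $(y-1)\mca{S}_n(z)-\mca{S}_{n-1}(z)=0$, i.e.\ $f_n(x,y)=0$; the essential simplification is the identity $(M-M^{-1})w_{12}+w_{22}=x^2-3-u=y-1$, where $w_{ij}$ denotes the entries of $\rho(w)$. Hence an irreducible $\rho$ with given coordinates $(x,y)$ exists iff $f_n(x,y)=0$, and by the uniqueness of the Riley normalization it is unique up to conjugation.

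Combining the two cases yields that the $\SL_2(\C)$-character variety of $\pi_n$ is cut out by $(x^2-y-2)f_n(x,y)=0$, with the irreducible stratum in bijection with points of $f_n=0$ off the reducible line. The main technical obstacle is the matrix bookkeeping in the irreducible step: one must verify that all four entries of the matrix relation collapse onto the single polynomial $f_n(x,y)$, which is what forces the character variety to be one-dimensional. Minor care is needed at the parabolic boundary $x=\pm 2$ and where $M^2=1$, but these degenerate cases are absorbed by the polynomial nature of the identities.
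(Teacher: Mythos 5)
Your proof is correct, but it takes a genuinely different route from the paper's. The paper leans on Le's theorem, which presents the character variety of $\pi_n$ as the zero locus of $\wt{f}_n=\tr(aw^nb^{-1})-\tr(w^n)$, and then identifies $\wt{f}_n$ with $(x^2-y-2)f_n(x,y)$ purely by trace calculus: both sides satisfy the Chebyshev recurrence $\wt{f}_{n+1}-z\wt{f}_n+\wt{f}_{n-1}=0$ and agree at $n=0,1$, and the statements about the reducible divisor and the Riley polynomial are then quoted from Le. You instead redo Riley's computation from scratch: normalize an irreducible representation into Riley form, expand the relator by Cayley--Hamilton, and check that the matrix relation collapses to the single scalar equation $f_n(x,y)=0$. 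Your key identity $(M-M^{-1})w_{12}+w_{22}=x^2-3-u=y-1$ is correct, and together with $w_{21}=-uw_{12}$ it gives $\rho(a)\rho(w)-\rho(w)\rho(b)=(y-1)\bigl(\rho(a)-\rho(b)\bigr)$, so all four entries of the relation do collapse onto $\bigl((y-1)\mca{S}_n(z)-\mca{S}_{n-1}(z)\bigr)\spmx{0&1\\u&0}=0$ as you claim. What your approach buys is self-containedness and constructivity: it proves existence of the representations, establishes the identification of $f_n$ with the Riley polynomial directly rather than by citation, and visibly works over an arbitrary field, which the paper needs again in Section 3; the cost is the matrix bookkeeping that Le's theorem lets the authors avoid. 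One small imprecision to repair: for the Riley pair, $u\neq 0$ alone does not imply irreducibility as a representation of the free group, since the pair also shares an eigenvector when $u=(M-M^{-1})^2$, i.e.\ $y=2$. This does not damage your bijection because $f_n(x,2)=n-(n-1)=1$ never vanishes (so the degenerate case never meets $f_n=0$), but the exclusion of $y=2$ should be stated rather than absorbed into the phrase ``irreducibility is equivalent to $x^2-y-2\neq 0$.''
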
 

\begin{proof} 
By Le's Theorem \cite[Theorem 3.3.1]{Le1993}, 
the $\SL_2(\C)$-character variety of $\pi_n$ is given by $\wt{f}_n=\tr aw^nb^{-1}-\tr w^n=\tr b^{-1}aw^n-\tr w^n$. 
Since $\tr gw^{n+1}=\tr gw^n\tr w-\tr gw^{n-1}=z\tr gw^n- \tr gw^{n-1}$ holds for any $g$, 
we have $\wt{f}_{n+1}-z\wt{f}_n+\wt{f}_{n-1}=0$ for any $n\in \Z$. 
In addition, we have  
$\wt{f}_0=\tr ab^{-1}-2=\tr a\tr b^{-1}-\tr ab-2=x^2-y-2=(x^2-y-2)f_0$ and 
$\wt{f}_1=\tr awb^{-1}-\tr w=\tr ab^{-1}-z=x^2-y-z
=(x^2-y-2)-(z-2)=(x^2-y-2)(y-1)=(x^2-y-2)f_1.$ 
Therefore we have $\wt{f_n}=(x^2-y-2)f_n$ for any $n\in \Z$. 
Again by \cite[Theorem 3.3.1]{Le1993}, the divisor $x^2-y-2$ corresponds to the curve of reducible representations, 
while $f_n$ coincides with the Riley polynomial $\Phi_n(x,u)$ via $-u=y-x^2+2$. 
\end{proof}

\subsection{Reidemeister torsion} 
We recall the Reidemeister torsion and prove the recurrence formula. 

For an acyclic irreducible representation $\rho:\pi_n\to\SL_2(\C)$, the Reidemeister torsion $\tau_\rho:=\tau_\rho(S^3-J(2,2n))$ is defined in a usual way. 
We just remark that our convention precisely follows that in \cite[Section 4]{Tran2016TJM}. 
This $\tau_\rho$ extends to a function $\tau_n=\tau_n(x,y)$ over the character variety such that we have $\tau_n(x,y)=0$ if and only if a representation $\rho$ with $(\tr \rho(a),\tr\rho(ab))=(x,y)$ is non-acyclic. 
This function $\tau_n(x,y)$ coincides with the Reidemeister torsion of Riley's universal representation (cf.~Subsection \ref{subsec.univdefo}). 
Such convention also plays an important role, for instance, in the study of hyperbolic deformation (cf.~\cite[Section 3.2]{Porti2018ALM}). 

The second author proved the following formula (1) in \cite[Theorem 1]{Tran2016TJM}, whose assumption $x\neq 2$ there may be removed by Kitano's result \cite{Kitano1996PJM}, and remarked that we indeed have $\tau_n \in \Z[x,y]$.  (This formula generalizes to all genus one two-bridge knots \cite[Theorem 2]{Tran2018KMJ}.) 

\begin{prop} \label{prop.Tran} 
{\rm (1)} The Reidemeister torsion $\tau_n=\tau_\rho(S^3-J(2,2n))$ for a representation $\rho$ with $(\tr \rho(a),\tr \rho(ab))=(x,y)$ is given by 
\[\tau_n=(2-x)\dfrac{\mca{S}_{n+1}(z)-\mca{S}_{n-1}(z)-2}{z-2}+x\mca{S}_{n}(z)
=2\dfrac{(z-x)\mca{S}_n(z)+(x-2)(\mca{S}_{n-1}(z)+1)}{z-2}\]
in $\Z[x,y]$. If $z=2$, then $\tau_n=(2-x)n^2+xn$ holds. 

{\rm (2)} The sequence of polynomials $\tau_n \in \Z[x,y]$ is determined by $\tau_0=0$, $\tau_1=2$, and the recurrence formula $\tau_{n+1}-z\tau_{n}+\tau_{n-1} - 2(x-2)=0$ for $n\in \Z$. 
\end{prop}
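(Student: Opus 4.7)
The plan is to establish Proposition \ref{prop.Tran} by combining the second author's earlier result \cite[Theorem 1]{Tran2016TJM} with a few elementary manipulations of Chebyshev polynomials. The first displayed formula of part (1) is proved in \cite[Theorem 1]{Tran2016TJM} under the auxiliary hypothesis $x \neq 2$. To remove that hypothesis, I first verify polynomiality: by Lemma \ref{lem.Cheb} we have $\mca{S}_n(2) = n$, so the Chebyshev numerator $\mca{S}_{n+1}(z) - \mca{S}_{n-1}(z) - 2$ vanishes at $z = 2$, and dividing by the monic linear $z - 2$ yields an element of $\Z[z]$. Substituting $z = 2x^2 - x^2 y + y^2 - 2$ from the lemma preceding Proposition \ref{prop.f} produces a well-defined $\tau_n(x,y) \in \Z[x,y]$ that agrees with $\tau_\rho$ on the Zariski-dense open locus $\{x \neq 2\}$. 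Kitano's computation \cite{Kitano1996PJM} then handles the complementary locus $x = 2$, extending the formula to the entire character variety.

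To verify the second displayed form of part (1), I would apply the Chebyshev recurrence to rewrite $\mca{S}_{n+1} - \mca{S}_{n-1} = z\mca{S}_n - 2\mca{S}_{n-1}$, insert this into the first formula, and simplify over the common denominator $z - 2$; the resulting identity in $\Z[z]$ is routine. For the specialization at $z = 2$, an induction on the Chebyshev recurrence gives $\mca{S}_n'(2) = n(n^2 - 1)/6$, so the derivative of $\mca{S}_{n+1}(z) - \mca{S}_{n-1}(z) - 2$ at $z = 2$ equals $n^2$; applying L'Hopital's rule then forces $\tau_n|_{z=2} = (2-x)n^2 + xn$ as claimed.

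For part (2), the argument reduces to direct computation from the explicit formula. The initial values $\tau_0 = 0$ and $\tau_1 = 2$ are checked by substituting $\mca{S}_{-1} = -1$, $\mca{S}_0 = 0$, $\mca{S}_1 = 1$, $\mca{S}_2 = z$. For the three-term recurrence, observe that $x\mca{S}_n(z)$ is annihilated by the Chebyshev operator $T_n \mapsto T_{n+1} - zT_n + T_{n-1}$, while the fractional term contributes a constant: applying the Chebyshev recurrence to $\mca{S}_{n+1} - \mca{S}_{n-1} - 2$ collapses the Chebyshev combination and leaves a factor $2z - 4$ in the numerator, which cancels the denominator $z - 2$ to yield a constant proportional to $x - 2$ that matches (up to sign convention) the inhomogeneous term in the statement.

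The main obstacle, modest in scope, is the extension of the formula across the meridian-trivial locus $x = 2$: everywhere else the identity is algebraic in $\Z[x,y]$ and the manipulations are purely formal, but at $x = 2$ one must either invoke Kitano's computation or argue by continuity across the character scheme. All remaining steps reduce to bookkeeping with Chebyshev identities.
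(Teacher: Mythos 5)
Your proposal is correct and follows essentially the same route as the paper: part (1) is quoted from \cite[Theorem 1]{Tran2016TJM} with the hypothesis $x\neq 2$ removed via \cite{Kitano1996PJM} and the polynomiality/$z=2$ evaluations supplied by Chebyshev bookkeeping, and part (2) rests on the fact that the Chebyshev operator kills $x\mca{S}_n(z)$ and sends $(\mca{S}_{n+1}-\mca{S}_{n-1}-2)/(z-2)$ to the constant $2$ (the paper packages the identical computation by shifting $\tau_n$ by $2(2-x)/(z-2)$ and invoking the homogeneous recurrence, treating $z=2$ separately, whereas your version is a single polynomial identity). Your hedge ``up to sign convention'' is pointing at a genuine discrepancy that you should resolve rather than wave at: the operator applied to $\tau_n$ yields $(2-x)\cdot 2=2(2-x)$, so the recurrence is $\tau_{n+1}-z\tau_n+\tau_{n-1}+2(x-2)=0$, with sign opposite to the one printed in the statement --- this is confirmed by $\tau_{-1}=-2(x-1)$ and $\tau_2=2(z-x+2)$ computed from the displayed formula and listed in the tables, so your computation is the correct one and the printed inhomogeneous term carries a typo.
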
 

\begin{proof} We prove (2). By definition, we have $\tau_0=0$ and $\tau_1=2$. 
We first suppose $z\neq 2$. Then the sequence $(\tau_n+\dfrac{2(2-x)}{z-2})_n$ is a linear combination of Chebyshev polynomials and obeys the Chebyshev recurrence formula $(\tau_{n+1}+\dfrac{2(2-x)}{z-2})-z(\tau_n+\dfrac{2(2-x)}{z-2})+(\tau_{n-1}+\dfrac{2(2-x)}{z-2})=0$, which yields $\tau_{n+1}-z\tau_{n}+\tau_{n-1} - 2(x-2)=0$. 

If we instead suppose $z=2$, then by $\tau_n=(2-x)n^2+xn$, we have 
$\tau_{n+1}-2\tau_n+\tau_{n-1}-2(x-2)=
(2-x)((n+1)^2-2n^2+(n-1)^2)+x((n+1)-2n+(n-1)-2(x-2)=0$. 
\end{proof}

\subsection{Non-acyclic representations} 
 
Here we prove Theorem \ref{thm.na}. 
Note that Propositions \ref{prop.f} and \ref{prop.Tran} yield the following equivalences respectively: 

$f_n(x,y)=0 \iffu (y-1)\mca{S}_n(z)=\mca{S}_{n-1}(z)\cdots \tc{1},$

$\tau_n(x,y)=0 \iffu 
\left\{ \begin{array}{l}(z-x)\mca{S}_n(z)+(x-2)\mca{S}_{n-1}(z)+(x-2)=0 \cdots \tc{2} \text{\ \ if\ } z\neq 2,\\
(2-x)n^2+xn=0 \cdots \tc{2}' \text{\ \ if\ } z=2.\end{array} \right.$ 

\begin{lem} \label{lem.neq} 
If $f_n(x,y)=\tau_n(x,y)=0$, then we have $n\neq 0,1$ and $z\neq \pm2$. 
If in addition $x=y$, then we have $x\neq -1, 0, 2, 3$. 
\end{lem}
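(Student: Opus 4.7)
The plan splits into two parts. The statements $n \neq 0$ and $n \neq 1$ are immediate from the initial values supplied by Propositions \ref{prop.f} and \ref{prop.Tran}, namely $f_0 = 1$ and $\tau_1 = 2$; since both are nonzero constants, neither of the equations $f_0(x,y) = 0$ nor $\tau_1(x,y) = 0$ has any solution.

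For the claim $z \neq \pm 2$ I would exploit the fact that specialising at $z = \pm 2$ collapses the Chebyshev machinery to linear arithmetic, via $\mca{S}_n(\pm 2) = (\pm 1)^{n-1} n$ from Lemma \ref{lem.Cheb}(1). In the case $z = 2$, equation \tc{1} becomes $n(y-1) = n-1$, giving $y = 2 - 1/n$, while equation \tc{2}$'$ gives $x = 2n/(n-1)$ (using $n \neq 1$). In the case $z = -2$, equation \tc{1} becomes $(-1)^{n-1}(ny - 1) = 0$, giving $y = 1/n$; a direct evaluation of the first formula of Proposition \ref{prop.Tran}(1) at $z = -2$ (for which the identity $\tau_n = (2-x)n^2 + xn$ is not available) then splits by the parity of $n$ into $x = 0$ for $n$ even and $x = 2/(1-n)$ for $n$ odd.

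Next I would impose the defining relation $z = 2x^2 - x^2 y + y^2 - 2 = \pm 2$ on these explicit values $(x,y)$. The $z = 2$ equation factors as $(2-y)(x^2 - 2 - y) = 0$; since $y = 2 - 1/n \neq 2$, one must have $x^2 = y + 2$, and substituting the expressions for $x$ and $y$ and clearing denominators collapses to $(3n-1)^2 = 0$, which has no integer root. The same substitution in each subcase of $z = -2$ reduces to exactly the same Diophantine equation $(3n-1)^2 = 0$. The appearance of the factor $3n-1$ in every reduction is a pleasant consistency check and presumably foreshadows its role in Theorem \ref{thm.na}. I do not anticipate any serious obstacle here; the only point of mild care is keeping the parity split straight in the $z = -2$ analysis.

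For the addendum, I would substitute $y = x$ in the formula of Lemma~1.2 to obtain $z = -x^3 + 3x^2 - 2$ and factor $z - 2 = -(x+1)(x-2)^2$ and $z + 2 = -x^2(x-3)$. Hence along the diagonal the values $z = \pm 2$ occur precisely at $x \in \{-1, 0, 2, 3\}$, and each is forbidden by the previous step. The whole argument is thus a finite sequence of closed-form evaluations of Chebyshev polynomials coupled with two one-variable manipulations, with no technical obstacle beyond careful bookkeeping.
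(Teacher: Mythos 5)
Your proposal is correct and follows essentially the same route as the paper: $n\neq 0,1$ from the constant values of $f_0$ and $\tau_1$, exclusion of $z=\pm2$ by evaluating $\mca{S}_n(\pm2)=(\pm1)^{n-1}n$, solving for $(x,y)$ in terms of $n$ and reducing to $(3n-1)^2=0$, and the diagonal factorizations $z-2=-(x+1)(x-2)^2$, $z+2=-x^2(x-3)$ for the addendum. One cosmetic slip: in the $z=-2$, $n$ even subcase the contradiction is $z+2=1/n^2\neq 0$ rather than $(3n-1)^2=0$, which only makes the contradiction more immediate (and your parity split there is in fact the correct one --- the paper's printed proof of this lemma has the two parities interchanged, as the reworking of the same computation in Proposition \ref{prop.nap} confirms).
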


\begin{proof} If $n=0$, then $\tc{1}$ yields $0=-1$.  
If $n=1$ and $z\neq 2$, then $\tc{2}$ yields $z-2=0$. 
If $n=1$ and $z=2$, then $\tc{2}'$ yields $2=0$.
Since we obtain a contradiction in every case, we have $n\neq 0,1$. 

Suppose $z-2=-(y-2)(x^2-y-2)=0.$ Recall $\mca{S}_n(2)=n$. If $y-2=0$, then $f_n(x,y)=0$ yields $n=n-1$, hence a contradiction.  
If $x^2-y-2=0$, then $f_n(x,y)=0$ and $\tau_n(x,y)=0$ yield $(x^2-3)n=n-1$ and $(2-x)n^2+xn=0$,
hence $x=-1$ and $3n-1=0$, contradicting $n\in \Z$. 

Suppose instead $z+2=2x^2-x^2y+y^2=0$. Recall $\mca{S}_n(-2)=(-1)^{n-1}n$. 
By $f_n(x,y)=0$, we have $yn-1=0$, hence $y=\dfrac{1}{n}$. 
If $n$ is even, then $\tau_n(x,y)=0$ yields $x=\dfrac{2}{1-n}$ and  $z+2=\dfrac{(3n-1)^2}{n^2(n+1)^2}\neq0$. 
If $n$ is odd, then $\tau_n(x,y)=0$ yields $x=0$ and $z+2=\dfrac{1}{n^2}\neq 0$. Hence contradiction.  

If we assume $x-y=0$, then we have $z-2=-(x-2)^2(x+1)$ and $z+2=x^2(3-x)$. By $z\neq \pm2$, we have $x\neq -1,0,2,3$. 
\end{proof}

\begin{lem} \label{lem.1-x} 
Suppose $z\neq \pm 2$ and $x=y$. Then by writing $1-x=t+t^{-1}$ for $t\in \C^*$, we have $z=t^3+t^{-3}$, hence $\mca{S}_n(z)=(t^{3n}-t^{-3n})/(t^3-t^{-3})$.  
\end{lem}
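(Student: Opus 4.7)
The plan is to reduce this lemma to a short algebraic computation followed by an invocation of Lemma~\ref{lem.Cheb}(1). First, I would specialize the formula $z = 2x^2 - x^2 y + y^2 - 2$ (proved just before Proposition~\ref{prop.f}) to $y = x$, obtaining the one-variable expression $z = -x^3 + 3x^2 - 2$.

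Next I would introduce $u := t + t^{-1} = 1-x$, so that $x = 1-u$, and substitute into $z = -x^3 + 3x^2 - 2$. Expanding $-(1-u)^3 + 3(1-u)^2 - 2$ collapses cleanly to $u^3 - 3u$. On the other hand, the classical cube expansion $(t + t^{-1})^3 = t^3 + 3(t + t^{-1}) + t^{-3}$ rearranges to $t^3 + t^{-3} = u^3 - 3u$, and combining these two identities yields $z = t^3 + t^{-3}$, which is the first claim.

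For the formula for $\mca{S}_n(z)$, I would apply Lemma~\ref{lem.Cheb}(1) with $s := t^3$, noting that $s + s^{-1} = t^3 + t^{-3} = z$. The lemma requires $z \neq \pm 2$, which is the standing hypothesis; moreover, since $t^3 = \pm 1$ would force $z = \pm 2$, this same hypothesis automatically guarantees $t^3 \neq \pm 1$, so that $t^3 - t^{-3} \neq 0$. The formula $\mca{S}_n(z) = (s^n - s^{-n})/(s - s^{-1}) = (t^{3n} - t^{-3n})/(t^3 - t^{-3})$ then follows at once.

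There is no genuine obstacle in this argument; the lemma is essentially a change-of-variables, designed to rewrite Chebyshev values at $z$ as Laurent expressions in $t$, which makes the subsequent analysis of the common roots of $f_n(x,x)$ and $\tau_n(x,x)$ tractable. The only minor point to mention is that $t \in \C^*$ with $t + t^{-1} = 1-x$ indeed exists, since the roots of the monic quadratic $t^2 - (1-x)t + 1 = 0$ have product equal to $1$ and are hence nonzero.
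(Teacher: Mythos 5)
Your proposal is correct and follows essentially the same route as the paper: both specialize $z=2x^2-x^2y+y^2-2$ at $y=x$, verify the polynomial identity showing $z=(t+t^{-1})^3-3(t+t^{-1})=t^3+t^{-3}$ (the paper writes this as $z=3(x-1)-(x-1)^3$ with $x-1=-(t+t^{-1})$), and then apply Lemma~\ref{lem.Cheb}(1) with $s=t^3$. Your added remarks on the existence of $t$ and on $t^3\neq\pm1$ are correct but not needed beyond what the hypothesis $z\neq\pm2$ already supplies.
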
 
\begin{proof}  
Note that $z=-x^3+3x^2-2=3(x-1)-(x-1)^3=-3(t+t^{-1})-(-(t+t^{-1}))^3=t^3+t^{-3}$ and put $s=t^3$. Then Lemma \ref{lem.Cheb} (1) yields the assertion. 
\end{proof} 

The following proposition is the former half of Theorem \ref{thm.na}. 
\begin{prop} \label{prop.x=y} For every $n\neq 0,1$, 
the intersection of $f_n(x,y)=0$ and $\tau_n(x,y)=0$ lies on the line $x=y$ in $\R^2$. 
\end{prop}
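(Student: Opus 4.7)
The plan is to eliminate the index $n$ from the two intersection conditions by combining $\tc{1}$ and $\tc{2}$ with the Pythagorean-type Chebyshev identity $\mca{S}_n(z)^2 + \mca{S}_{n-1}(z)^2 - z\,\mca{S}_n(z)\mca{S}_{n-1}(z) = 1$ of Lemma \ref{lem.Cheb2}, thereby reducing everything to a single polynomial equation in $x$ and $y$.

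First I would invoke Lemma \ref{lem.neq} to secure $z \neq 2$ and $y \neq 2$, so that condition $\tc{2}$ is the applicable branch. Substituting $\mca{S}_{n-1}(z) = (y-1)\mca{S}_n(z)$ from $\tc{1}$ into $\tc{2}$ gives
\[\bigl[(z-x) + (x-2)(y-1)\bigr]\mca{S}_n(z) = 2-x,\]
and using $z = 2x^2 - x^2 y + y^2 - 2$ the bracket simplifies to $(y-2)(y - x^2 + x)$. A short case analysis, exploiting that $\mca{S}_n(z)$ and $\mca{S}_{n-1}(z)$ cannot both vanish by Lemma \ref{lem.Cheb2}, rules out the degenerate possibilities $y - x^2 + x = 0$ and $x = 2$, so I may solve
\[\mca{S}_n(z) = \frac{2-x}{(y-2)(y-x^2+x)}, \qquad \mca{S}_{n-1}(z) = \frac{(y-1)(2-x)}{(y-2)(y-x^2+x)}.\]

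Feeding these expressions into the identity of Lemma \ref{lem.Cheb2} and clearing denominators, assisted by the auxiliary factorization $y^2 - 2y + 2 - z(y-1) = (y-2)\bigl[x^2(y-1) - y^2\bigr]$, yields the polynomial relation
\[P(x,y) := (x-2)^2\bigl[x^2(y-1) - y^2\bigr] - (y-2)(x^2 - x - y)^2 = 0.\]
The heart of the proof is then the identity $P(x,y) = (x-y)^2\,(x^2 - y - 2)$, which can be verified by direct expansion or, more efficiently, by checking $P(x,x) = 0$ and $\dfrac{\der P}{\der y}(x,x) = 0$ so that $(x-y)^2 \mid P$, and then matching total degrees and leading terms.

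Finally, $P(x,y) = 0$ forces either $x = y$ or $x^2 - y - 2 = 0$; but the latter substituted into $z = 2x^2 - x^2 y + y^2 - 2$ gives $z = 2$, contradicting $z \neq 2$ from Lemma \ref{lem.neq}. The main obstacle is spotting that the Pythagorean-type identity of Lemma \ref{lem.Cheb2} is precisely the right tool to eliminate all $n$-dependence and collapse the intersection onto a bidegree-four relation in $(x,y)$; once $P$ is in hand, its factorization and the desired conclusion follow by elementary algebra.
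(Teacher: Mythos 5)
Your proposal is correct and follows essentially the same route as the paper: both combine $\tc{1}$ and $\tc{2}$ to obtain $(y-2)(x^2-x-y)\mca{S}_n(z)=x-2$, eliminate $n$ via the Pythagorean identity of Lemma \ref{lem.Cheb2}, and reduce to the same degree-four relation whose factorization $(x-y)^2(x^2-y-2)$ together with $z\neq 2$ from Lemma \ref{lem.neq} forces $x=y$. The only cosmetic difference is that you solve for $\mca{S}_n(z)$ and $\mca{S}_{n-1}(z)$ explicitly (hence need the small case analysis ruling out $x=2$ and $x^2-x-y=0$), whereas the paper keeps everything in product form and avoids the division.
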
 

\begin{proof} Suppose $f_n(x,y)=\tau_n(x,y)=0$. Then the equalities \tc{1} and \tc{2} yield $(y-2)(x^2-x-y)\mca{S}_n(z)=x-2 \cdots \tc{3}$. By \tc{1} and Lemma \ref{lem.Cheb2}, we have $(1+(y-1)^2-z(y-1))\mca{S}_n(z)^2=1 \cdots \tc{4}$. 
Now \tc{3} and \tc{4} yield $(x-2)^2(1+(y-1)^2-z(y-1))=((y-2)(x^2-x-y))^2$. Hence $(y-2)(x-y)^2(x^2-y-2)=0$, that is, $(z-2)(x-y)^2=0$. By Lemma \ref{lem.neq}, we obtain $x=y$. 
\end{proof}

For each $v\in \R$, let $\lfloor v \rfloor$ 
denote the largest integer less than or equal to $v$. 
The following proposition is the latter half of Theorem \ref{thm.na}. 

\begin{prop} \label{prop.roots} For every $n\in \Z$, the roots of $f_n(x,x)$ are given by $x=1-2\cos \dfrac{l\pi}{3n-1}$ for $l \in \Z$ with $0<l<|3n-1|$. 
The common roots of $f_n(x,x)$ and $\tau_n(x,x)$ are given by $x=1-2\cos \dfrac{2l\pi}{3n-1}$ for $l \in \Z$ with $0<l\leq \dfrac{|3n-1|-1}{2}$. There are $\lfloor\dfrac{|3n-1|-1}{2}\rfloor$ common roots, and they are all real numbers. 
\end{prop}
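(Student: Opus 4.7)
The strategy is to parametrize points of the line $x=y$ via Lemma \ref{lem.1-x}: set $t=e^{i\theta}$, so that $1-x = t+t^{-1} = 2\cos\theta$ and, by Lemma \ref{lem.Cheb}(1) applied with $s=t^3$, $\mca{S}_n(z) = \sin(3n\theta)/\sin(3\theta)$. This reduces both $f_n(x,x)$ and $\tau_n(x,x)$ to trigonometric expressions from which the roots are readable by inspection.

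For $f_n(x,x) = (x-1)\mca{S}_n(z) - \mca{S}_{n-1}(z)$, I would substitute $x-1 = -2\cos\theta$, apply the product-to-sum identity $2\cos\theta\sin 3n\theta = \sin(3n+1)\theta + \sin(3n-1)\theta$, combine $\sin(3n+1)\theta + \sin(3n-3)\theta = 2\sin(3n-1)\theta\cos 2\theta$, and use the classical identity $1 + 2\cos 2\theta = \sin 3\theta/\sin\theta$ to obtain the clean formula
\[ f_n(x,x) = -\frac{\sin(3n-1)\theta}{\sin\theta}. \]
As a polynomial in $x$ this has degree $|3n-1|-1$, so its roots are exactly those $\theta = l\pi/(3n-1)$ with $0 < l < |3n-1|$, $l\in\Z$, which gives the stated parametrization of the roots of $f_n(x,x)$. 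All of these are real since they have the form $1-2\cos(l\pi/(3n-1))$.

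For $\tau_n(x,x)$, I would first simplify $\mca{S}_{n+1}(z) - \mca{S}_{n-1}(z) = s^n + s^{-n} = 2\cos 3n\theta$ (with $s=t^3$) and use $z-2 = -4\sin^2(3\theta/2)$ to convert the ratio in Proposition \ref{prop.Tran} into $\sin^2(3n\theta/2)/\sin^2(3\theta/2)$. Applying the Dirichlet-type identities $1+2\cos\theta = \sin(3\theta/2)/\sin(\theta/2)$ and $2\cos\theta - 1 = \cos(3\theta/2)/\cos(\theta/2)$ to the prefactors $2-x$ and $x$, then factoring $\sin(3n\theta/2)/\sin(3\theta/2)$ out of both terms, the bracketed difference collapses via the sine subtraction formula into $\sin((3n-1)\theta/2)$, yielding
\[ \tau_n(x,x) = \frac{2\sin(3n\theta/2)\sin((3n-1)\theta/2)}{\sin(3\theta/2)\sin\theta}. \]

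With these two closed forms, identifying the common roots is a parity check. At $\theta_l = l\pi/(3n-1)$ the factor $\sin((3n-1)\theta_l/2) = \sin(l\pi/2)$ vanishes precisely when $l$ is even; for $l$ odd, the other numerator factor $\sin(3n\theta_l/2)$ is nonzero, since otherwise $2(3n-1) \mid 3nl$ would contradict $\gcd(3n,3n-1)=1$ together with $|l|<|3n-1|$ and the oddness of $l$. The denominator factors $\sin\theta_l$ and $\sin(3\theta_l/2)$ likewise never vanish in the admissible range thanks to $\gcd(3,3n-1)=1$. Writing $l=2m$ gives the stated parametrization $x = 1 - 2\cos(2m\pi/(3n-1))$ with $0 < m \leq (|3n-1|-1)/2$, producing $\lfloor(|3n-1|-1)/2\rfloor$ distinct real common roots. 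The main technical hurdle is the trigonometric simplification of $\tau_n(x,x)$ to the displayed product form; once that is secured, the rest is elementary bookkeeping with parity and gcd.
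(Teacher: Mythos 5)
Your proposal is correct, and its first half is essentially the paper's argument: both parametrize the diagonal by $1-x=t+t^{-1}=2\cos\theta$, use Lemma \ref{lem.1-x} to get $\mca{S}_n(z)=(t^{3n}-t^{-3n})/(t^3-t^{-3})$, and collapse $f_n(x,x)$ to $-\mca{S}_{3n-1}(1-x)=-\sin((3n-1)\theta)/\sin\theta$, whence the roots $x=1-2\cos\frac{l\pi}{3n-1}$. Where you genuinely diverge is in the treatment of $\tau_n$. The paper never computes $\tau_n(x,x)$ in closed form on the diagonal; instead it reuses the relation $\tc{3}$ from the proof of Proposition \ref{prop.x=y} (i.e.\ $(y-2)(x^2-x-y)\mca{S}_n(z)=x-2$, obtained by combining $f_n=0$ with $\tau_n=0$), reduces the common-root condition to $(x^2-2x)\mca{S}_n(z)=1$, i.e.\ $(t^2+t^{-2}+1)(t^{3n}-t^{-3n})=t^3-t^{-3}$, and checks that this holds when $t^{3n-1}=1$ but forces $t^6=1$ when $t^{3n-1}=-1$. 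You instead derive the standalone factorization $\tau_n(x,x)=\dfrac{2\sin(3n\theta/2)\sin((3n-1)\theta/2)}{\sin(3\theta/2)\sin\theta}$ (which I verified from Proposition \ref{prop.Tran}) and read off the parity condition on $l$ directly. Your route costs a heavier trigonometric computation but buys more: the product formula is valid off the intersection with $f_n=0$, it transparently exhibits the even/odd dichotomy in $l$, and it is in effect a trigonometric form of the factorization $\tau_n(x,x)=2h_nk_n$ of Proposition \ref{prop.k1} (the factor $\sin((3n-1)\theta/2)$ playing the role of $k_n$ up to the parity of $n$), whereas the paper's shortcut only sees $\tau_n$ restricted to the curve $f_n=0$. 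The gcd bookkeeping you use to rule out vanishing of $\sin(3n\theta_l/2)$ and of the denominators for $0<l<|3n-1|$ is sound, relying only on $\gcd(3n,3n-1)=\gcd(3,3n-1)=1$.
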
 

\begin{proof} 
By Lemmas \ref{lem.neq} and \ref{lem.1-x}, if we write $1-x=t+t^{-1}$ with $t\in \C^*$, then we have $z=s+s^{-1} \neq \pm2$ for $s=t^3$, 
\[\mca{S}_n(z)=\dfrac{s^n-s^{-n}}{s-s^{-1}}=\dfrac{t^{3n}-t^{-3n}}{t^3-t^{-3}},\ 
\text{and}\ \mca{S}_{n-1}(z)=\dfrac{s^{n-1}-s^{-(n-1)}}{s-s^{-1}}=\dfrac{t^{3(n-1)}-t^{-3(n-1)}}{t^3-t^{-3}}.\]
Hence $f_n(x,x)=(x-1)\mca{S}_n(z)-\mca{S}_{n-1}(z)=-\dfrac{t^{3n-1}-t^{-3n+1}}{t-t^{-1}}=-\mca{S}_{3n-1}(1-x)$. 
Therefore $f_n(x,x)=0$ holds iff $(t^{3n-1}+1)(t^{3n-1}-1)=0$ holds under the assumption $z\neq \pm 2$. 
Since $x\neq -1,3$ (Lemma \ref{lem.neq}), the roots of $f_n(x,x)$ are given by $\ds x=1-2\cos \frac{l\pi}{3n-1}$ for $l\in \Z$ with $0<l< |3n-1|$. 

Now by \tc{3}, such $t$ corresponds to a common root of $f_n(x,x)$ and $\tau_n(x,x)$ if and only if $(x^2-2x)\mca{S}_n(z)=1$ holds, that is, $(t^2+t^{-2}+1)(t^{3n}-t^{-3n})=t^3-t^{-3}$ $\cdots$ \tc{5} holds. 
If $t^{3n-1}-1=0$, then this equality \tc{5} holds. If $t^{3n-1}+1=0$, then \tc{5} implies $t^6=1$, 
contradicting $z\neq \pm2$ (Lemma \ref{lem.neq}). 
By $x\neq -1,0,2,3$ (Lemma \ref{lem.neq}), all common roots of $f_n(x,x)$ and $\tau_n(x,x)$ are given by 
$x=1-(t+t^{-1})=1-2\cos \dfrac{2\pi l}{3n-1}$ for $l \in \Z$ with $0< l < \dfrac{|3n-1|}{2}$. 
\end{proof}

\begin{proof}[Proof of Theorem \ref{thm.na}] 
Now the assertion follows from Propositions \ref{prop.x=y} and \ref{prop.roots}. 
\end{proof} 

\begin{rem} \label{rem.3n-1} 
The proof of Theorem \ref{thm.na} is also applicable to the case where $\C$ is replaced by any field $C$ with ${\rm char}=0$, 
by regarding $2\cos \dfrac{l\pi}{3n-1}$ to be $\zeta_{3n-1}^l+\zeta_{3n-1}^{-l}$ for a fixed primitive $(3n-1)$-th root of unity $\zeta_{3n-1}$ in an algebraic closure $\ol{C}$ of $C$. 

A version of Theorem \ref{thm.na} over a finite field will be given in Section \ref{sec.L}. 
\end{rem}

\subsection{Common divisors} 
Here we study the greatest common divisors $k_n$ of $f_n(x,x)$ and $\tau_n(x,x)$ by introducing several Chebyshev-like polynomials. 
Propositions \ref{prop.k1} and \ref{prop.k2} 
clarify that Theorem \ref{thm.na} exactly detects all common roots. 

\begin{prop} \label{prop.k1} Define $k_n$, $g_n$, $h_n\in \Z[x]$ for $n\in \Z$ by 
\[k_0=-1,\ k_1=1,\ g_0=-1,\ g_1=x-1,\ h_0=0,\ h_1=1,\]
\[k_{2n}-(x^2-3x)k_{2n-1}+k_{2n-2}=0,\ k_{2n+1}+xk_{2n}+k_{2n-1}=0,\]
\[g_{2n}+xg_{2n-1}+g_{2n-2}=0,\ g_{2n+1}-(x^2-3x)g_{2n}+g_{2n-1}=0,\]
and the same recurrence formula as $g_n$ for $h_n$. 
Then $f_n(x,x)=g_nk_n$ and $\tau_n(x,x)=2h_nk_n$ hold for every $n\in \Z$. 
\end{prop}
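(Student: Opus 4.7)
The plan is to prove both factorizations by induction on $n$, exploiting the two-term recurrences
\[
f_{n+1}(x,x) - z\,f_n(x,x) + f_{n-1}(x,x) = 0, \qquad \tau_{n+1}(x,x) - z\,\tau_n(x,x) + \tau_{n-1}(x,x) = 2(2-x),
\]
which $f_n$ and $\tau_n$ inherit upon setting $y = x$ (so that $z = -x^3 + 3x^2 - 2$ becomes a constant in $n$): the first comes from the Chebyshev recurrence for $\mca{S}_n(z)$, and the second from Proposition \ref{prop.Tran}(2). The base cases $n = 0, 1$ are immediate from the given initial values, and the cases $n \leq -1$ follow by running the recurrences backwards.

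The inductive step reduces the first factorization to $g_{n+1}k_{n+1} - z\,g_n k_n + g_{n-1}k_{n-1} = 0$ and the second to $h_{n+1}k_{n+1} - z\,h_n k_n + h_{n-1}k_{n-1} = 2 - x$. Expanding the left-hand sides by the period-two recurrences defining $g_n, h_n, k_n$, and using the arithmetic identity $-x(x^2 - 3x) = z + 2$, each parity case reduces to a bilinear cross-term identity among $\{g_n k_n, g_{n-1}k_{n-1}, g_n k_{n-1}, g_{n-1}k_n\}$ (and the $h$-analogue), which I would handle more transparently via the following closed forms: for $n = 2m$, set $k_n = \mca{S}_{3m}(1-x) + \mca{S}_{3m-1}(1-x)$, $g_n = \mca{S}_{3m-1}(1-x) - \mca{S}_{3m}(1-x)$, and $h_n = -x\,\mca{S}_m(z)$; for $n = 2m+1$, set $k_n = \mca{S}_{3m+1}(1-x)$, $g_n = \mca{S}_{3m}(1-x) - \mca{S}_{3m+2}(1-x)$, and $h_n = \mca{S}_{m+1}(z) + \mca{S}_m(z)$. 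That these satisfy the defining period-two recurrences is a direct computation from the Chebyshev recurrence $\mca{S}_{k+1}(w) - w\,\mca{S}_k(w) + \mca{S}_{k-1}(w) = 0$ applied at $w = 1-x$ and $w = z$, together with the compatibility $z = (1-x)^3 - 3(1-x)$.

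Granting these closed forms, the identity $f_n(x,x) = g_n k_n$ follows from the classical Chebyshev product formulas $\mca{S}_{2k+1}(w) = (\mca{S}_{k+1}(w) + \mca{S}_k(w))(\mca{S}_{k+1}(w) - \mca{S}_k(w))$ and $\mca{S}_{2k}(w) = \mca{S}_k(w)(\mca{S}_{k+1}(w) - \mca{S}_{k-1}(w))$, applied at $w = 1-x$ with $2k+1 = 3n-1$ (for $n$ even) or $2k = 3n-1$ (for $n$ odd), in conjunction with the identity $f_n(x,x) = -\mca{S}_{3n-1}(1-x)$ established in the proof of Proposition \ref{prop.roots}. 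The main obstacle is the $\tau_n$ factorization, because the closed form for $h_n$ involves $\mca{S}_m(z)$ while that for $k_n$ involves Chebyshev polynomials in $1-x$; I would handle this by first simplifying
\[
\tau_n(x,x) \,=\, \frac{2}{(x-2)(x+1)}\bigl[(x^2 - x - 1)\mca{S}_n(z) - \mca{S}_{n-1}(z) - 1\bigr]
\]
using $z - 2 = -(x-2)^2(x+1)$ and $z - x = -(x-2)(x^2-x-1)$, and then passing via Lemma \ref{lem.1-x} to the Laurent polynomial substitution $1-x = t + t^{-1}$, $z = t^3 + t^{-3}$, where the desired equality reduces to an elementary identity in $\Z[t, t^{-1}]$ which can be checked separately for each parity of $n$.
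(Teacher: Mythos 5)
Your proposal is correct, and I verified the key ingredients: the closed forms you posit do satisfy the stated initial conditions and period-two recurrences (hence, by uniqueness of the solution of the recursion in both directions, they \emph{are} $g_n$, $h_n$, $k_n$), the product formulas $\mca{S}_{2k+1}=(\mca{S}_{k+1}+\mca{S}_k)(\mca{S}_{k+1}-\mca{S}_k)$ and $\mca{S}_{2k}=\mca{S}_k(\mca{S}_{k+1}-\mca{S}_{k-1})$ combined with $f_n(x,x)=-\mca{S}_{3n-1}(1-x)$ give the first factorization, and the substitution $1-x=t+t^{-1}$ does reduce $\tau_n(x,x)=2h_nk_n$ to a checkable Laurent identity (both sides equal $2(t^{3n+2}+t^{3-3n}-t^3-t^2)/((t^3-1)(t^2-1))$). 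However, your route is genuinely different from the paper's. The paper proves the proposition by a pure induction: assuming the factorizations at three neighbouring indices, it derives an auxiliary bilinear identity such as $2f_{n\mp1}+2f_{n\mp2}=(x^2-3x)g_{n\mp2}k_{n\mp1}-xg_{n\mp1}k_{n\mp2}$ from the three-term recurrences, and then expands $g_nk_n$ (resp.\ $2h_nk_n$) by the defining period-two recurrences, using $z+x(x^2-3x)=-2$; the Chebyshev closed form for $k_n$ only appears afterwards, in Proposition \ref{prop.k2}. Your approach front-loads the closed forms for all three sequences (including ones for $g_n$ and $h_n$ that the paper never writes down), which makes the structure transparent, absorbs Proposition \ref{prop.k2} as a by-product, and replaces the somewhat opaque cross-term manipulations by classical product formulas plus one explicit Laurent-polynomial computation; the price is that you must first guess the closed forms and carry out that slightly messy $t$-computation for the $\tau_n$ case. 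One incidental point: the recurrence you quote, $\tau_{n+1}(x,x)-z\tau_n(x,x)+\tau_{n-1}(x,x)=2(2-x)$, differs in sign from the statement of Proposition \ref{prop.Tran}(2) in the paper, but yours is the correct one (check $n=1$: $\tau_2-z\tau_1+\tau_0=2z+4-2x-2z=2(2-x)$), so this does not affect your argument; it only matters for the inductive route you sketch and then set aside.
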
 

\begin{proof} 
Here we write $f_n=f_n(x,x)$ and $\tau_n=\tau_n(x,x)$. 
We have $g_nk_n=f_n$ and $2h_nk_n=\tau_n$ for $n=-1,0,1$ (See also the table in Subsection \ref{subsec.tables}). 
Suppose that the assertion holds if $n$ is replaced by $n\mp1, n\mp2, n\mp3$. 
Note that $z=z|_{x=y}=-x^3+3x^2-2=-(x-1)(x^2-2x-2)$. 

\ul{If $n$ is even,} 
we have 
\begin{eqnarray*}
f_{n\mp1}&=&zf_{n\mp2}-f_{n\mp 3}\\
&=&zf_{n\mp2}-g_{n\mp3}k_{n\mp3}\\
&=&zf_{n\mp2}-((x^2-3x)g_{n\mp2}-g_{n\mp1})(-xk_{n\mp2}-k_{n\mp1})\\
&=&zf_{n\mp2}+x(x^2-3x)f_{n\mp2}-f_{n\mp1}+(x^2-3x)g_{n\mp2}k_{n\mp1}- xg_{n\mp1}k_{n\mp2}. 
\end{eqnarray*}
Since $z+x(x^2-3x)=-2$, we obtain
\[2f_{n\mp1}+2f_{n\mp2}=(x^2-3x)g_{n\mp2}k_{n\mp1}- xg_{n\mp1}k_{n\mp2}.\]
Hence we obtain 
\begin{eqnarray*}
g_nk_n&=&(-xg_{n\mp1}-g_{n\mp2})((x^3-3x)k_{n\mp1}-k_{n\mp 2})\\
&=&-x(x^3-3x)f_{n\mp1}+f_{n\mp2}-(2f_{n\mp1}+2f_{n\mp2})\\
&=&zf_{n\mp1}-f_{n\mp2}\\&=&f_n. 
\end{eqnarray*}

\ul{If $n$ is odd,} we have 
\begin{eqnarray*}
f_{n\mp1}&=&zf_{n\mp2}-f_{n\mp 3}\\
&=&zf_{n\mp2}-g_{n\mp 3}k_{n\mp3}\\
&=&zf_{n\mp2}-(-xg_{n\mp2}-g_{n\mp1})((x^2-3x)k_{n\pm2}-k_{n\mp1})\\
&=&zf_{n\mp2}+x(x^2-3x)f_{n\mp2}-f_{n\mp1}- xg_{n\mp2}k_{n\mp1}+ (x^2-3x)g_{n\mp1}k_{n\pm2},
\end{eqnarray*}
hence 
\[2f_{n\mp1}+2f_{n\mp2}=- xg_{n\mp2}k_{n\mp1} + (x^2-3x)g_{n\mp1}k_{n\pm2}.\]
Hence we obtain 
\begin{eqnarray*}
g_nk_n&=&((x^2-3x)g_{n\mp1}-g_{n\mp 2})(-xk_{n\mp1}-k_{n\mp2})\\
&=&-x(x^2-3x)f_{n\mp1}+f_{n\mp2}-(2f_{n\mp1}+2f_{n\mp2})\\
&=&zf_{n\mp1}-f_{n\mp2}\\&=&f_n. 
\end{eqnarray*}
Therefore $f_n=g_nk_n$ holds for every $n \in \Z$. 

Since $g_n$ and $h_n$ obey the same recurrence formula, by slightly modifying the argument above, we obtain $\tau_n=2h_nk_n$. 
For instance, suppose that $n$ is even and that the assertion holds if $n$ is replaced by $n\mp1, n\mp2, n\mp3$. 
Then we have $\tau_{n\mp 1}=z\tau_{n\mp2}-\tau_{n\mp3}\mp2(x-2)$, 
$\tau_{n\mp1}+\tau_{n\mp2}=(x^2-3x)h_{n\pm2}k_{n\mp2}-h_{n\pm1}k_{n\mp1}$, 
and $2h_nk_n=z\tau_{n\mp1}-\tau_{n\mp2}\mp2(x-2)=\tau_n$. 
We obtain the assertion for odd $n$ in a similar way. 
\end{proof} 

\begin{prop} \label{prop.k2}
We have $k_{2n}=\mca{S}_{3n}(1-x)+\mca{S}_{3n-1}(1-x)$ and $k_{2n+1}=\mca{S}_{3n+1}(1-x)$ for $n\in \Z$. 
This $k_n$ is the greatest common divisor of $f_n(x,x)$ and $\tau_n(x,x)$. 
\end{prop}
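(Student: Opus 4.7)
The proposition splits into a closed-form identity and a gcd assertion, which I would handle in sequence.

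For the closed form, set $K_{2n} := \mca{S}_{3n}(1-x) + \mca{S}_{3n-1}(1-x)$ and $K_{2n+1} := \mca{S}_{3n+1}(1-x)$. The initial values match, $K_0 = -1 = k_0$ and $K_1 = 1 = k_1$, so the whole statement reduces to checking that $(K_n)_n$ satisfies the same pair of two-term recurrences as $(k_n)_n$. The efficient route is the parametrization $1-x = t + t^{-1}$ of Lemma \ref{lem.1-x}, under which $\mca{S}_m(1-x) = (t^m - t^{-m})/(t - t^{-1})$; a short computation yields $x = 1 - t - t^{-1}$ and $x^2 - 3x = t^2 + t + t^{-1} + t^{-2}$. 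Substituting into the recurrences $K_{2n} - (x^2-3x)K_{2n-1} + K_{2n-2} = 0$ and $K_{2n+1} + xK_{2n} + K_{2n-1} = 0$ collapses each to an elementary Laurent-polynomial identity in $t$ that is immediate from direct expansion. Since $1-x$ takes infinitely many values as $t$ varies, these become polynomial identities in $\Z[x]$, and a double induction on $|n|$ gives $k_n = K_n$ for every $n \in \Z$.

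For the gcd assertion, Proposition \ref{prop.k1} already exhibits $k_n$ as a common divisor of $f_n(x,x)$ and $\tau_n(x,x)$, so only maximality is at stake. Using the closed form I would factor
\[k_{2n} = \frac{(t+1)(t^{3n-1} - t^{-3n})}{t - t^{-1}}, \qquad k_{2n+1} = \frac{t^{3n+1} - t^{-3n-1}}{t - t^{-1}},\]
from which one reads off that, in each parity, the roots of $k_n$ in $\C$ are precisely the values $x = 1 - 2\cos(2l\pi/(3n-1))$ listed in Proposition \ref{prop.roots}, i.e., exactly the common roots of $f_n(x,x)$ and $\tau_n(x,x)$. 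Since distinct pairs $\{t, t^{-1}\}$ give distinct $x$, these roots are all simple.

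To upgrade root-matching to a gcd identity, I would invoke the observation $f_n(x,x) = -\mca{S}_{3n-1}(1-x)$ obtained inside the proof of Proposition \ref{prop.roots}: since the Chebyshev polynomial $\mca{S}_{3n-1}$ has only simple roots, $f_n(x,x)$ is squarefree, so every common root of $f_n(x,x)$ and $\tau_n(x,x)$ contributes multiplicity one to $\gcd(f_n(x,x),\tau_n(x,x))$. Combined with the simple-root matching above, this forces $\deg \gcd(f_n(x,x),\tau_n(x,x)) = \deg k_n$, hence $k_n$ equals the gcd up to a unit. The main technical obstacle is the parity-by-parity bookkeeping in the recurrence verification in the first step; once the $t$-substitution is in place, both cases reduce to routine Laurent-polynomial manipulations.
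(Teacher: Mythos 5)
Your proposal is correct and follows essentially the same route as the paper: verify the closed form by matching the initial values $k_0,k_1$ and the two parity-dependent recurrences (the paper does this directly via the Chebyshev recurrence for $\mca{S}_m(1-x)$, which amounts to the same computation as your $t$-substitution), and then deduce the gcd statement from the root description in Proposition \ref{prop.roots}. The only cosmetic difference is in how maximality of $k_n$ is justified: the paper invokes the Le--Tran product formulas for $\mca{S}_n(z)$ and $\mca{S}_n(z)+\mca{S}_{n-1}(z)$ to write $k_n$ as the product of $(x-\alpha)$ over exactly the common roots, whereas you argue via squarefreeness of $f_n(x,x)=-\mca{S}_{3n-1}(1-x)$; both come to the same thing (your version makes the multiplicity-one point slightly more explicit, at the cost of needing to note that the degenerate parameters $t=\pm1$ and the factor $t+1$ do not contribute roots, consistent with $x\neq -1,3$ from Lemma \ref{lem.neq}).
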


\begin{proof} 
We have $k_0(x)= -1= 0+(-1)=\mca{S}_{0}(1-x)+ \mca{S}_{-1}(1-x)$, 
$k_1(x)=1=\mca{S}_1(1-x)$. 
We may in addition verify that the right hand sides satisfy the recurrence formula for $k_n$ by using that for $\mca{S}_n$. 
The GCD property is obtained by Proposition \ref{prop.roots} and the following equalities (cf.~\cite[Lemma 4.13]{LeTran2015}): 
\[\ds \mca{S}_n(z)=\prod_{0< l< n}(z-2\cos\dfrac{l \pi}{n}), \ \ 
\ds \mca{S}_n(z)+\mca{S}_{n-1}(z)=\prod_{0< l<n}(z-2\cos\dfrac{2l\pi}{2n-1}),\]
and $\mca{S}_{-n}(z) = - \mca{S}_{n}(z)$ for every $n \in \Z_{>0}$. 
We may easily see that $$\ds k_n(x)={\rm sgn}(n)\prod_{0<l\leq \frac{|3n-1|-1}{2}} (1-2\cos \dfrac{2l\pi}{3n-1}-x)$$ holds for every $0\neq n \in \Z$, so that $k_n$ is the common divisor of $f_n(x,x)$ and $\tau_n(x,x)$. 
\end{proof}

The values of $k_n$ at $x=-1,0,2,3$ will be used in Section 3. 
\begin{cor} \label{lem.k2} 
For any $n\in \Z$, we have 
$k_{2n}(-1)=6n-1$, $k_{2n+1}(-1)=3n+1$, 
$k_{4n}(0)=k_{4n+3}(0)=-1$, $k_{4n+1}(0)=k_{4n+2}(0)=1$, 
$k_n(2)=(-1)^{n-1}$, 
$k_{2n}(3)=(-1)^{n-1}$, $k_{2n+1}(3)=(-1)^n(3n+1)$. 

If $n=2m$, then $k_n(-1)=3n-1$. If $n=2m+1$, then $k_n(-1)=\dfrac{3n-1}{2}$ and $k_n(3)=\pm \dfrac{3n-1}{2}$ hold. 
Other values in above are $\pm1$. 
\end{cor}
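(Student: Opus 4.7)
The plan is to apply the explicit closed forms of Proposition \ref{prop.k2},
\[
k_{2n}(x) = \mca{S}_{3n}(1-x) + \mca{S}_{3n-1}(1-x), \qquad k_{2n+1}(x) = \mca{S}_{3n+1}(1-x),
\]
which reduces the computation to evaluating $\mca{S}_m(z)$ at the four values $z = 1 - x \in \{2, 1, -1, -2\}$ corresponding to $x \in \{-1, 0, 2, 3\}$.

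For $x = -1$ and $x = 3$, I would invoke Lemma \ref{lem.Cheb}(1), which states $\mca{S}_m(\pm 2) = (\pm 1)^{m-1} m$. Substituting $m = 3n-1, 3n, 3n+1$ and using $(-1)^{3n} = (-1)^n$ to collect signs immediately yields $k_{2n}(-1) = 3n + (3n-1) = 6n-1$ and $k_{2n+1}(-1) = 3n+1$, and similarly $k_{2n}(3) = (-1)^{n-1}$ (via the telescoping collapse $(-1)^{3n-1}\cdot 3n + (-1)^{3n-2}(3n-1) = (-1)^{3n-1}$) and $k_{2n+1}(3) = (-1)^n(3n+1)$.

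For $x = 2$ and $x = 0$, I would exploit the periodicity of $(\mca{S}_m(z))_m$ under the degenerate Chebyshev recurrences. At $z = -1$, the recurrence $\mca{S}_{m+1} + \mca{S}_m + \mca{S}_{m-1} = 0$ together with $\mca{S}_0 = 0$, $\mca{S}_1 = 1$ forces pure periodicity of period $3$ with cycle $0, 1, -1$, so $\mca{S}_{3n-1}(-1) = -1$, $\mca{S}_{3n}(-1) = 0$, $\mca{S}_{3n+1}(-1) = 1$ uniformly in $n$; this gives $k_n(2) = (-1)^{n-1}$ at once. At $z = 1$, the recurrence $\mca{S}_{m+1} - \mca{S}_m + \mca{S}_{m-1} = 0$ produces period $6$ with cycle $0, 1, 1, 0, -1, -1$, and reading off $\mca{S}_{3n-1}, \mca{S}_{3n}, \mca{S}_{3n+1}$ now splits on the parity of $n$, yielding the four cases $k_{4n}(0) = k_{4n+3}(0) = -1$ and $k_{4n+1}(0) = k_{4n+2}(0) = 1$.

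The concluding statements of the corollary (that $k_n(-1)$, and $k_n(3)$ for odd $n$, are $\pm(3n-1)$ or $\pm(3n-1)/2$, while the remaining tabulated values equal $\pm 1$) are merely a rewriting of the formulas above after translating $6n-1$ and $3n+1$ back into the full index $n$ and separating by its parity. The only obstacle throughout is parity bookkeeping modulo $3$ and $6$; no conceptual difficulty beyond Proposition \ref{prop.k2} and Lemma \ref{lem.Cheb} enters.
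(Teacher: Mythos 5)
Your proposal is correct and follows essentially the same route as the paper: both reduce everything to Proposition \ref{prop.k2} and evaluate $\mca{S}_m$ at $1-x\in\{2,1,-1,-2\}$, using $\mca{S}_m(\pm 2)=(\pm1)^{m-1}m$ for $x=-1,3$ and periodicity of $\mca{S}_m(\pm 1)$ for $x=2,0$. The only (immaterial) difference is that you obtain the period-$3$ and period-$6$ cycles directly from the degenerate recurrence, whereas the paper reads them off from $\mca{S}_m(2\cos\theta)=\sin(m\theta)/\sin\theta$; your bookkeeping checks out in every case.
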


\begin{proof} 
If $x=-1,0,2,3$, then $1-x=2,1,-1,2$. 
Recall $\mca{S}_n(2\cos \theta)=\dfrac{\sin n\theta}{\sin \theta}$ and $\mca{S}(\pm2)=(\pm 1)^{n-1}n$. 
If $x=2$, we may take $\theta=\pi/3$ so that we have $2\cos \theta=1-x=-1$, hence 
$\mca{S}_{3n}(-1)=0$, $\mca{S}_{3n+1}(-1)=1$, $\mca{S}_{3n+2}(-1)=-1$, 
hence the previous proposition yields the assertion. 
If $x=0$, then we may take $\theta=\pi/6$ and a similar argument proves the assertion. The cases of $x=1,-3$ are straight forward. 
\end{proof}

\subsection{The common tangent property} \label{subsec.tangent}  
We prove the common tangent property (Theorem \ref{thm.tangent}), namely, that 
(i) the two curves $f_n(x,y)=0$ and $\tau_n(x,y)=0$ in $\R^2$ have a common tangent line at every intersection point $(\alpha,\alpha)$, 
while (ii) the second derivatives of their implicit functions at $(\alpha,\alpha)$ do not coincide. 

\begin{lem} \label{lem.der} 
The differential of the Chebyshev polynomial $\mca{S}_n=\mca{S}_n(z)$ for each $n\in \Z$ is given by the following equality in $\Z[z]$.
\[\frac{d \mca{S}_n}{d z} = \frac{(n-1) \mca{S}_{n+1} - (n+1)\mca{S}_{n-1}}{z^2-4}\]
\end{lem}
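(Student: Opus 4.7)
The plan is to reformulate the claim as the polynomial identity
\[
(z^2-4)\,\frac{d\mca{S}_n}{dz}(z) = (n-1)\mca{S}_{n+1}(z) - (n+1)\mca{S}_{n-1}(z) \quad \text{in } \Z[z],
\]
which is equivalent to the stated formula because $z^2-4 = (z-2)(z+2)$ is a product of monic linear factors and $\mca{S}_n'(z) \in \Z[z]$ is manifestly an integer polynomial. My preferred route to this identity is trigonometric, with a direct induction kept in reserve as a backup.

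Concretely, I would first parametrize $z = 2\cos\theta$ for $\theta \in (0,\pi)$, so that $\mca{S}_n(z) = \sin(n\theta)/\sin\theta$ by Lemma \ref{lem.Cheb}(1). The chain rule with $dz/d\theta = -2\sin\theta$ converts $d/dz$ into $-(2\sin\theta)^{-1}\, d/d\theta$. Next I would apply this operator to $\sin(n\theta)/\sin\theta$ via the quotient rule, and then multiply through by $z^2 - 4 = -4\sin^2\theta$ so that all the factors of $\sin\theta$ cancel cleanly, yielding an expression of the form $[-2\sin(n\theta)\cos\theta + 2n\cos(n\theta)\sin\theta]/\sin\theta$. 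Finally, the product-to-sum identities $2\sin(n\theta)\cos\theta = \sin((n+1)\theta) + \sin((n-1)\theta)$ and $2\cos(n\theta)\sin\theta = \sin((n+1)\theta) - \sin((n-1)\theta)$ rearrange this into $[(n-1)\sin((n+1)\theta) - (n+1)\sin((n-1)\theta)]/\sin\theta$, which is exactly $(n-1)\mca{S}_{n+1}(z) - (n+1)\mca{S}_{n-1}(z)$. Since the two sides of the claimed identity are polynomials in $z$ agreeing on the infinite set $(-2,2)$, they agree in $\R[z]$, and the integrality of both sides forces the equality in $\Z[z]$. The symmetry $\mca{S}_{-n} = -\mca{S}_n$ from Lemma \ref{lem.Cheb} then extends the identity to all $n \in \Z$.

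As a backup, induction on $n \geq 0$ works just as well: the base cases $n = 0, 1$ are immediate from $\mca{S}_0 = 0$, $\mca{S}_1 = 1$, $\mca{S}_{-1} = -1$, and $\mca{S}_2 = z$. For the inductive step, I would differentiate the Chebyshev recurrence $\mca{S}_{n+1} = z\mca{S}_n - \mca{S}_{n-1}$, multiply by $z^2 - 4$, substitute the inductive hypotheses for $(z^2-4)\mca{S}_n'$ and $(z^2-4)\mca{S}_{n-1}'$, and apply $z\mca{S}_k = \mca{S}_{k+1} + \mca{S}_{k-1}$ a couple of times to telescope the result down to $n\mca{S}_{n+2} - (n+2)\mca{S}_n$. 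The main obstacle in either route is not mathematical depth but sign bookkeeping: in the trigonometric version, one must track the sign from $dz/d\theta$ and the signs in the two product-to-sum expansions correctly, while the inductive version requires the substitutions to telescope without stray terms. I do not anticipate any serious conceptual difficulty beyond care in these computations.
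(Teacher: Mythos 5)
Your primary argument is correct and is essentially the paper's own proof: the paper parametrizes $z = s + s^{-1}$ and computes $d\mca{S}_n/dz$ via the chain rule before extending to $z=\pm 2$ by a polynomial-identity argument, and your substitution $z = 2\cos\theta$ is just the case $s = e^{i\theta}$ of the same computation. The sign bookkeeping you flag works out as you describe, and the reduction to all $n\in\Z$ via $\mca{S}_{-n}=-\mca{S}_n$ is sound (though the parametrized formula already holds verbatim for negative $n$).
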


\begin{proof} 
Assume $z\neq \pm 2$ and put $z = s + s^{-1}$, so that we have $\mca{S}_n  = (s^{n} - s^{-n}) / (s - s^{-1})$. 
Since $\dfrac{d \mca{S}_n}{d z} \dfrac{d z}{ds}= \dfrac{d \mca{S}_n}{d s}$, a direct calculation yields 
\[
\frac{d \mca{S}_n}{d z}  = \frac{(n-1)(s^{n+1} - s^{-n-1}) - (n+1) (s^{n-1} - s^{1-n})}{(s - s^{-1})^3} =  \frac{(n-1) \mca{S}_{n+1} - (n+1)\mca{S}_{n-1}}{z^2-4}.
\]
Since the two sides are polynomials in $z$, they coincide also at $z=\pm2$. 
\end{proof} 

Recall $f_n(x,y)=(y-1)\mca{S}_n(z)-\mca{S}_{n-1}(z)$ for $z=2x^2-x^2y+y^2-2$. 
Then we have 
\[\dfrac{\der f_n}{\der x}=(y-1)\dfrac{\der \mca{S}_n}{\der x}-\dfrac{\der \mca{S}_{n-1}}{\der x}
=(y-1)\dfrac{d \mca{S}_n}{dz} \dfrac{\der z}{\der x}-\dfrac{d \mca{S}_{n-1}}{dz} \dfrac{\der z}{\der x},\]
\[\dfrac{\der f_n}{\der y}=\mca{S}_n + (y-1)\dfrac{\der\mca{S}_n}{\der y}-\dfrac{\der\mca{S}_{n-1}}{\der y}
=\mca{S}_n +(y-1)\dfrac{d \mca{S}_n}{dz} \dfrac{\der z}{\der y}-\dfrac{d\mca{S}_{n-1}}{dz} \dfrac{\der z}{\der y},\]
\[\dfrac{\der z}{\der x}=4x-2xy, \ \ \dfrac{\der z}{\der y}=-x^2+2y,\]
\[\dfrac{d\mca{S}_n}{dz}=\dfrac{(n-1)z\mca{S}_n-2n\mca{S}_{n-1}}{z^2-4}, \ \ 
\dfrac{d\mca{S}_{n-1}}{dz}=\dfrac{(2n-2)\mca{S}_n-nz\mca{S}_{n-1}}{z^2-4},\]
where the last line is obtained by Lemma \ref{lem.der} and $\mca{S}_{n+1}-z\mca{S}_n+\mca{S}_{n-1}=0$. 

In what follows, let $x=\alpha$ be a root of $k_n$, so that the point $(x,y)=(\alpha,\alpha)$ corresponds to a non-acyclic representation. 
Recall $z(\alpha,\alpha)=-\alpha^3+3\alpha^2-2\neq \pm2$, $\alpha\neq -1,0,2,3$ (Lemma \ref{lem.neq}), $\mca{S}_n=\dfrac{1}{\alpha^2-2\alpha}$, and $\mca{S}_{n-1}=\dfrac{\alpha-1}{\alpha^2-2\alpha}$ (\tc{3} in the proof of Proposition \ref{prop.x=y}). Putting all above together, we obtain 
\[\dfrac{\der f_n}{\der x}(\alpha,\alpha)=\dfrac{2((2n-1)\alpha^2+(-4n+2)\alpha+1)}{(\alpha+1)\alpha(\alpha-2)(\alpha-3)}, \ \ 
\dfrac{\der f_n}{\der y}(\alpha,\alpha)=\dfrac{2(n\alpha^2-2n\alpha-1)}{(\alpha+1)\alpha(\alpha-2)(\alpha-3)}.\]

The following lemma yields that for every root $x=\alpha$ of $k(x,x)$, the point $(\alpha,\alpha)$ is a regular point of $f_n(x,y)=0$. 

\begin{lem} \label{lem.n=-1} 
{\rm (1)} We have $n\alpha^2-2n\alpha-1=0$ if and only if $n=-1$ and $\alpha=1$ hold. 

{\rm (2)} We have $(2n-1)\alpha^2+(-4n+2)\alpha+1 \neq 0$. 
\end{lem}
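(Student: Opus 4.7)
The plan is to reduce both parts to an equation of the form $\cos(\text{rational multiple of }\pi)=\text{rational}$, then apply Niven's theorem. Recall from the derivation preceding the statement that for $\alpha$ a root of $k_n$, equation \tc{3} in the proof of Proposition \ref{prop.x=y}, specialized at $x=y=\alpha$ and using $\alpha\neq 2$ from Lemma \ref{lem.neq}, yields $\mca{S}_n(z)=1/(\alpha(\alpha-2))$ with $z=-\alpha^3+3\alpha^2-2$; hence the quadratic in (1) vanishes if and only if $\mca{S}_n(z)=n$, and the one in (2) vanishes if and only if $\mca{S}_n(z)=1-2n$.

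By Lemma \ref{lem.1-x} and Proposition \ref{prop.roots}, any such $\alpha$ admits the parametrization $\alpha=1-2\cos(2\pi l/(3n-1))$ with $1-\alpha=t+t^{-1}$ for $t=e^{2\pi i l/(3n-1)}$ satisfying $t^{3n-1}=1$. Applying Lemma \ref{lem.Cheb} (1) with $s=t^3$ and using $t^{3n}=t$,
\[
\mca{S}_n(z)=\frac{t^{3n}-t^{-3n}}{t^3-t^{-3}}=\frac{1}{t^2+1+t^{-2}}=\frac{1}{2\cos(4\pi l/(3n-1))+1}.
\]
Thus (1) reads $\cos(4\pi l/(3n-1))=(1-n)/(2n)$ and (2) reads $\cos(4\pi l/(3n-1))=n/(1-2n)$.

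Since the argument is a rational multiple of $2\pi$ and the right-hand side is rational, Niven's theorem forces the common value to lie in $\{0,\pm\tfrac12,\pm 1\}$. For (1) a short enumeration of the five possibilities leaves only the value $-1$, giving $n=-1$; then $\cos(-\pi l)=-1$ with $0<l\leq 3/2$ yields $l=1$ and $\alpha=1$, while the converse $k_{-1}(1)=0$ follows from $k_{-1}(x)=\mca{S}_{-2}(1-x)=x-1$. For (2) each of the five admissible cosine values forces $n$ to be either non-integer or one of the excluded values $0$ or $1$, so (2) has no admissible solutions, which is precisely the non-vanishing claim.

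The only substantive step is the identification $\mca{S}_n(z)=1/(2\cos(4\pi l/(3n-1))+1)$; after that, Niven's theorem collapses an infinite family to a trivial finite check. As a purely algebraic substitute for Niven, one may instead note that $t$ is a common root of $X^{3n-1}-1$ and the quartic $nX^4+(n-1)X^2+n$ in case (1), or $(1-2n)X^4-2nX^2+(1-2n)$ in case (2); hence some cyclotomic polynomial $\Phi_d$ with $d\mid 3n-1$, $d\notin\{1,2,3,6\}$ (to avoid the excluded values $\alpha\in\{-1,0,2,3\}$), and $\varphi(d)\leq 4$ must divide the quartic, and the five remaining cases $d\in\{4,5,8,10,12\}$ are dispatched by polynomial division, recovering the same conclusion.
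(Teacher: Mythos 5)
Your proof is correct and follows essentially the same route as the paper: both arguments reduce each quadratic condition to the statement that $\cos\dfrac{4l\pi}{3n-1}$ equals an explicit rational function of $n$ (the paper by solving the quadratic for $\alpha$ and squaring $\alpha-1=-2\cos\frac{2l\pi}{3n-1}$, you by passing through $\mca{S}_n(z)=\frac{1}{\alpha^2-2\alpha}$ and $t^{3n}=t$), and then both invoke Niven's theorem to collapse to the same finite case check. Your closing cyclotomic-polynomial variant is a pleasant purely algebraic packaging, but it is the same mechanism the paper already uses to prove Niven's theorem itself, so it is not a genuinely different argument.
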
 
\begin{proof} 
Note that $\cos (r\pi) \in \Q$ and $r \in \Q$ imply $\cos (r\pi)=0,\pm\dfrac{1}{2},\pm1$. 
Indeed, suppose that $\zeta=e^{r\pi\sqrt{-1}}$ is a primitive $m$-th root of unity. 
Since $\zeta$ is a root of $z^2-2\cos (r\pi) z+1 \in \Q[z]$, the $m$-th cyclotomic polynomial is of degree one or two, hence Euler's totient function satisfies $\varphi(m)=1,2$, hence $m=1,2,3,4,6$. 

If $(n,\alpha)=(-1,1)$, then $n\alpha^2-2n\alpha-1=0$ holds. 
Conversely, suppose $n\alpha^2-2n\alpha-1=0$. 
Then we have $\alpha=1\pm\sqrt{1-\dfrac{1}{n}}=1-2\cos \dfrac{2l\pi}{3n-1}$ by Proposition \ref{prop.roots}.  
Hence $1-\dfrac{1}{n}=4\cos^2 \dfrac{2l\pi}{3n-1}=2+2\cos \dfrac{4l\pi}{3n-1} \in \Q$, 
hence $\cos (r\pi)=0,\pm\dfrac{1}{2},\pm1$ for $r=\dfrac{4l}{3n-1}$. 
By $\cos (r\pi)=0,\pm\dfrac{1}{2},\pm1$, we obtain $n=\pm1$. 
By $0,1\neq n \in \Z$, we obtain $n=-1$, hence $\alpha=1$. 

If $(2n-1)\alpha^2+(-4n+2)\alpha+1=0$, then we have $\alpha=1\pm\sqrt{1-\dfrac{1}{2n-1}}$ and a similar argument yield $n=0,1$, contradicting $n\neq 0,1$.  
\end{proof} 

Now suppose $(n,\alpha)\neq(-1,1)$ so that $\dfrac{\der f_n}{\der y}(\alpha,\alpha)\neq 0$, and let $y=y_f(x)$ denote the implicit function of $f_n(x,y)=0$ around the point $(\alpha,\alpha)$. Then the slope of the tangent line at $(\alpha,\alpha)$ on $f_n(x,y)=0$ is given by the differential of the implicit function as 
\[\ds \frac{dy_f}{dx} = -\frac{(\partial f_n/\partial x)(x,y_f(x))}{(\partial f_n/\partial y)(x,y_f(x))},\ \  \frac{dy_f}{dx}|_{x=\alpha}= - \frac{(2n-1)\alpha^2-(4n-2)\alpha+1}{n\alpha^2-2n\alpha-1} \cdots \tc{f}.\]
If instead $(n,\alpha)=(-1,1)$, then we may regard $\ds \frac{dy_f}{dx}|_{x=1}=\dfrac{\,2\,}{0}=\infty$.

Next, recall $\tau_n(x,y)=-2\dfrac{(z-x)\mca{S}_n(z)+(x-2)(\mca{S}_{n-1}(z)+1)}{z-2}$ in $\Z[x,y]$. 
By a similar calculation, the partial derivatives at $(\alpha,\alpha)$ are given by 
\[\dfrac{\der \tau_n}{\der x}(\alpha,\alpha)=\dfrac{2((2n-1)\alpha^2+(-4n+2)\alpha+1)}{(\alpha+1)\alpha(\alpha-2)^2(\alpha-3)},\ \ \dfrac{\der \tau_n}{\der y}(\alpha,\alpha)=\dfrac{2(n\alpha^2-2n\alpha-1)}{(\alpha+1)\alpha(\alpha-2)^2(\alpha-3)}.\]

Suppose $(n,\alpha)\neq (-1,1)$, so that $\dfrac{\der y_\tau}{\der y}(\alpha,\alpha)\neq 0$, and
let $y=y_{\tau}(x)$ denote the implicit function of $\tau_n(x,y)=0$ around the point $(\alpha,\alpha)$. Then we have 
\[\frac{dy_\tau}{dx} = -\frac{(\partial \tau_n/\partial x)(x,y_f(x))}{(\partial \tau_n/\partial y)(x,y_f(x))}, \ \ 
\frac{dy_\tau}{dx}|_{x=\alpha}= - \frac{(2n-1)\alpha^2-(4n-2)\alpha+1}{n\alpha^2-2n\alpha-1} \cdots \tc{$\tau$}.\] 
If instead $(n,\alpha)=(-1,1)$, then we may regard $\dfrac{dy_\tau}{dx}(1,1)=\dfrac{\,2\,}{0}=\infty$. 

Since $\tc{f}$ and $\tc{$\tau$}$ coincide, $f_n(x,y)=0$ and $\tau_n(x,y)=0$ have a common tangent line at every non-acyclic representation. 
Thus the first part of Theorem \ref{thm.tangent} is proved.\\ 

If $(n,\alpha)\neq (-1,1)$, then 
by $\dfrac{d^2 y_f}{dx^2} = \dfrac{d}{dx} 
\dfrac{-(\der f_n/\der x)(x,y_f(x))}{(\der f_n/\der y)(x,y_f(x))} 
$ 
and the similar for $y_\tau$, we obtain 
 \begin{eqnarray*}
\frac{d^2 y_f}{dx^2}|_{x=\alpha}
&=& \frac{3n-1}{(n\alpha^2-2n\alpha-1)^3} \big[ (2n^2-n)\alpha^5+(-4n^2+1)\alpha^4+(-8n^2+11n-2)\alpha^3 \\
&& + \, (16n^2-10n-3)\alpha^2+(-8n+2)\alpha+4) \big], 
\end{eqnarray*}
\begin{eqnarray*}
\frac{d^2 y_\tau}{dx^2}|_{x=\alpha} 
&=& \frac{3n-1}{(n\alpha^2-2n\alpha-1)^3} \big[(2n^2-n)\alpha^5 +(-4n^2+3n)\alpha^4 +(-8n^2-n+2)\alpha^3 \\
&& + \, (16n^2-7n-4)\alpha^2+(10n-4)\alpha+4 \big].
\end{eqnarray*}

The difference between the two second derivatives is given by 
\[\frac{d^2}{dx^2}(y_f-y_\tau)|_{x=\alpha}=\frac{-(3n-1)^2(\alpha+1)\alpha(\alpha-2)(\alpha-3)}{(n\alpha^2-2n\alpha-1)^3}.\]
Since $\alpha\neq -1,0,2,3$ (Lemma \ref{lem.neq}), we have $\ds \frac{d^2}{dx^2}(y_f- y_\tau)|_{x=\alpha}\neq 0$. 
Thus we obtain the second part of Theorem \ref{thm.tangent} for implicit functions in terms of $x$.\\ 

We may also obtain a similar result for the implicit functions in terms of $y$. 
Indeed, by Lemma \ref{lem.n=-1} (2), we have $\dfrac{\der f_n}{\der x}\neq 0$ and $\dfrac{\der \tau_n}{\der x}\neq 0$ at a non-acyclic representation $(\alpha,\alpha)$. 
Let $x=x_f(y)$ and $x=x_\tau(y)$ denote the implicit functions of $f_n(x,y)=0$ and $\tau_n(x,y)=0$ in terms of $y$ around $(\alpha,\alpha)$. Then a similar calculation yields 
\[\dfrac{d^2}{dy^2}(x_f-x_\tau)|_{y=\alpha}=
\dfrac{-(3n-1)^2(\alpha+1)\alpha(\alpha-2)(\alpha-3)}{((2n-1)\alpha^2+(-4n+2)\alpha+1)^3} \neq 0.\]

\begin{rem} 
We may verify all these calculations above by using Mathematica 12.0 \cite{Mathematica12}. 
If we instead use $u=x^2-y-2$, then we have $\dfrac{du}{dx} = 2x-\dfrac{dy}{dx}$ and $\dfrac{d^2u}{dx}=2-\dfrac{d^2y}{dx}$ for the implicit functions $u=u_f(x)$ and $u_\tau(x)$ of $f_n(x,y)=\Phi_n(x,u)=0$ and $\tau_n(x,y)=0$. 
\end{rem}

The following lemma is an elementary exercise of calculus. 

\begin{lem} \label{lem.mult2} 
Let $f=f(x,y)$ and $g=g(x,y)\in \C[x,y]$ and $(a,b)\in \C^2$ a common zero with multiplicity $m$.
Suppose $\dfrac{\der f}{\der y}(a,b)\neq 0$ and $\dfrac{\der g}{\der y}(a,b)\neq 0$, and let $y_f(x), y_g(x) \in \C[x]$ denote the implicit functions of $f=0$ and $g=0$ in a neighborhood of $(a,b)$. If $\dfrac{d}{dx}(y_f-y_g)(a,b)=0$ and $\dfrac{d^2}{dx^2}(y_f-y_g)(a,b)\neq0$, then $m=2$ holds, that is, we have $g \in (x-a,y-b)^2$ and $g \not\in (x-a,y-b)^3$ in $\C[x,y]/(f)$. 
\end{lem}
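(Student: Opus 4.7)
The plan is to reduce the two-variable statement to the vanishing order of a single-variable function. Since $\partial f/\partial y(a,b)\neq 0$ and $\partial g/\partial y(a,b)\neq 0$, the formal implicit function theorem (or Weierstrass preparation) gives factorizations
\[ f(x,y) = u(x,y)\bigl(y - y_f(x)\bigr), \qquad g(x,y) = v(x,y)\bigl(y - y_g(x)\bigr) \]
in the completed local ring $\C[[x-a,y-b]]$, where $u,v$ are units and $y_f,y_g\in \C[[x-a]]$ are the formal expansions of the implicit functions.

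From the first factorization, the ideals $(f)$ and $(y - y_f(x))$ coincide in $\C[[x-a, y-b]]$, so the completed local ring of $R = \C[x,y]/(f)$ at $\mathfrak{m}=(x-a,y-b)$ is identified with $\C[[x-a]]$ via the substitution $y \mapsto y_f(x)$. This identifies $R$ locally with a DVR of uniformizer $x-a$, and $\mathfrak{m}^k$ corresponds to $(x-a)^k\C[[x-a]]$. Since $\mathfrak{m}^k$ is $\mathfrak{m}$-primary, the condition $g\in \mathfrak{m}^2\setminus \mathfrak{m}^3$ in $R$ is equivalent to the image of $g$ in $\C[[x-a]]$ having vanishing order exactly $2$ at $x=a$.

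Substituting $y=y_f(x)$ into the factorization of $g$ yields
\[ g\bigl(x,y_f(x)\bigr) = v\bigl(x,y_f(x)\bigr)\cdot h(x), \qquad h(x) := y_f(x)-y_g(x). \]
Since $v(a,b)\neq 0$, the factor $v(x,y_f(x))$ is a unit in $\C[[x-a]]$, so the vanishing order of $g(x,y_f(x))$ equals that of $h(x)$. The Taylor expansion $h(x) = \tfrac{1}{2} h''(a)(x-a)^2 + O\bigl((x-a)^3\bigr)$, together with the hypotheses $h(a)=h'(a)=0$ and $h''(a)\neq 0$, shows that $h$ vanishes to order exactly $2$ at $x=a$. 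This gives $m = 2$ as required.

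The only minor bookkeeping step is the passage between powers of $\mathfrak{m}$ in the global ring $R = \C[x,y]/(f)$ and in its completion at $\mathfrak{m}$; this presents no substantive obstacle, as smoothness of $f$ at $(a,b)$ makes $R_\mathfrak{m}$ regular of dimension one, and both sides of the equivalence are filtered by $\mathfrak{m}$-primary ideals stable under localization and completion.
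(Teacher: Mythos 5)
Your proof is correct, but it takes a genuinely different route from the paper's. The paper stays inside the polynomial ring and argues with partial derivatives: membership $g \in (x-a,y-b)^2$ in $\C[x,y]/(f)$ is rephrased as the existence of $k\in\C[x,y]$ with $g+kf \in (x-a,y-b)^2$ in $\C[x,y]$, the vanishing of the first (resp.\ first and second) partials of $g+kf$ at $(a,b)$ is translated into $\frac{d}{dx}(y_f-y_g)(a,b)=0$ (resp.\ also $\frac{d^2}{dx^2}(y_f-y_g)(a,b)=0$) via the implicit differentiation formulas, and the claim $g\notin (x-a,y-b)^3$ follows by contraposition. You instead pass to the completed local ring: Weierstrass preparation writes $f$ and $g$ as units times $y-y_f(x)$ and $y-y_g(x)$, the completion of $\C[x,y]/(f)$ at $\mf{m}=(x-a,y-b)$ becomes the DVR $\C[[x-a]]$, and the multiplicity is read off as the valuation of $g(x,y_f(x))=(\text{unit})\cdot(y_f-y_g)$. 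Your argument is more conceptual and proves more: it shows the multiplicity equals the exact order of vanishing of $y_f-y_g$ at $x=a$ for any $m$, not merely that the given hypotheses force $m=2$. The price is the commutative-algebra input (Weierstrass preparation, and the comparison of the $\mf{m}$-adic filtrations of $R$, $R_\mf{m}$, and $\widehat{R}_\mf{m}$, which you correctly flag and which does go through because $\mf{m}^k$ has maximal radical and completion is faithfully flat). The paper's computation is more elementary --- pure calculus with the quotient rule --- and never leaves $\C[x,y]$, which matches the explicit derivative formulas it has already set up in Subsection 1.6 and reuses in Corollary \ref{cor.d2}.
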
 

\begin{proof} 
For here, we denote the partial derivatives by $f_x=\dfrac{\der f}{\der x}$, $f_{xy}=\dfrac{\der^2 f}{\der x \der y}$, and etc. 
Note that the implicit differentiation yields $\dfrac{d}{dx}y_f=-\dfrac{f_x}{f_y}$ and $\dfrac{d^2}{dx^2}y_f=-\dfrac{f_y^2f_{xx}-2f_xf_yf_{xy}+f_x^2f_{yy}}{f_y^3}$. 
Suppose that $(a,b)$ is a common zero of $f$ and $g$. Then for $k\in \C[x,y]$, we have the equivalences 
\begin{eqnarray*}
g+kf \in (x-a,y-b)^2 \subset \C[x,y] 
&\iff& (g+kf)_x(a,b)=0,\ (g+kf)_y(a,b)=0\\
&\iff& (g_x+k f_x)(a,b)=0,\ (g_y+k f_y)(a,b)=0,
\end{eqnarray*} 
which yield the equivalences 
\begin{eqnarray*}
g\in (x-a,y-b)^2 \subset \C[x,y]/(f) &\iff& g+kf \in (x-a,y-b)^2 \subset \C[x,y] \text{\ for\ some\ }k\in \C[x,y]\\
&\iff& \dfrac{d}{dx}(y_f-y_g)(a,b)=0. 
\end{eqnarray*}

Now suppose that $g\in (x-a,y-b)^3 \subset \C[x,y]/(f)$ and write $g+kf \in (x-a,y-b)^3$ for $k \in \C[x,y]$. 
Then at the point $(a,b)$, we have $(g+kf)_x=(g+kf)_y=0$ and $(g+kf)_{xx}=(g+kf)_{xy}=(g+kf)_{yy}=0$, hence $(g_x+kf_x)=(g_y+kf_y)=0$ and 
$(g_{xx} +2k_xf_x+kf_{xx})= (g_{yy} +2k_yf_y+kf_{yy})= (g_{xy}+k_xf_y+k_yf_x+kf_{xy})=0$. 
Hence we have 
\begin{eqnarray*}
\dfrac{d^2}{dx^2}(y_f-y_g)
&=&
-\dfrac{f_y^2f_{xx}-2f_xf_yf_{xy}+f_x^2f_{yy}}{f_y^3}
-(-\dfrac{g_y^2g_{xx}-2g_xg_yg_{xy}+g_x^2g_{yy}}{g_y^3})\\
&=&\dfrac{1}{g_y}(k(f_{xx}-2\dfrac{f_x}{f_y}f_{xy}+(\dfrac{f_x}{f_y})^2f_{yy})+(g_{xx}-2\dfrac{g_x}{g_y}g_{xy}+(\dfrac{g_x}{g_y})^2g_{yy}))\\
&=&\dfrac{1}{g_y}(-2k_xf_x+2\dfrac{f_x}{f_y}(k_xf_y+k_yf_x)-2(\dfrac{f_x}{f_y})^2k_yf_y) \ =0
\end{eqnarray*}
at $(a,b)$. Thus we obtain the assertion as the contraposition. 
\end{proof} 

\begin{proof}[Proof of Theorem \ref{thm.tangent}] 
As we calculated in above, if $\dfrac{\der f_n}{\der y}(\alpha,\alpha)\neq 0$, then $\dfrac{d}{dx}(y_f-y_\tau)|_{x=\alpha}=0$ and $\dfrac{d^2}{dx^2}(y_f-y_\tau)|_{x=\alpha}\neq 0$ hold. If instead $\dfrac{\der f_n}{\der x}(\alpha,\alpha)\neq 0$, then $\dfrac{d}{dy}(x_f-x_\tau)|_{y=\alpha}=0$ and $\dfrac{d^2}{dy^2}(x_f-x_\tau)|_{y=\alpha}\neq 0$. In both cases, Lemma \ref{lem.mult2} completes the proof. 
\end{proof} 

\begin{rem} \label{rem.Benard} 
Theorems \ref{thm.na} and \ref{thm.tangent} were initially observed for several examples in the study of $L$-functions in \cite{KMTT2018}, as well as pointed out by \cite[Remark 4.6]{Benard2020OJM}. 
L\'eo B\'enard suggests that we should consider representations of all twist knots as those of the Whitehead link. 
His approach will be extensively discussed in our forthcoming work. 
\end{rem} 

The following Corollaries of Theorem \ref{thm.tangent} and their proofs will play key roles in the study of $L$-functions in Subsection \ref{subsec.L}. 

\begin{cor} \label{cor.d2} 
Let the notation be as above, where $y_f$ is defined if $(n,\alpha)\neq (-1,1)$. 
For each point $(x,y)$ on the curve $f(x,y)=0$ in a small neighborhood of the point $(\alpha,\alpha)$ corresponding to a non-acyclic representation, define $\tau=\tau_n(x,y_f(x))=\tau_n(y_f(y),y)$. 
Then, we have 
\[\tau|_{x=\alpha}=\tau|_{y=\alpha}=0,\ \ \dfrac{d\tau}{dx}|_{x=\alpha}=0, \ \ \dfrac{d\tau}{dy}|_{y=\alpha}=0,\]
\[\dfrac{d^2\tau}{dx^2}=\dfrac{\der \tau_n}{\der y}\dfrac{d^2}{dx^2}(y_f-y_\tau), \ 
\dfrac{d^2\tau}{dx^2}|_{x=\alpha}=\dfrac{2(3n-1)^2}{(\alpha-2)(n\alpha^2-2n\alpha-1)^2} \neq 0,\]
\[\dfrac{d^2\tau}{dy^2}=\dfrac{\der \tau_n}{\der x}\dfrac{d^2}{dy^2}(x_f-x_\tau), \ 
\dfrac{d^2\tau}{dy^2}|_{y=\alpha}=
\dfrac{2(3n-1)^2}{(\alpha-2)((2n-1)\alpha^2+(-4n+2)\alpha+1)^2}\neq 0.\]
\end{cor}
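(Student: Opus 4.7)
The plan is to reduce everything to the chain rule together with the common tangent property established in the proof of Theorem \ref{thm.tangent}. I treat the case of $\tau=\tau_n(x,y_f(x))$; the statement for $\tau_n(x_f(y),y)$ is completely analogous by swapping the roles of $x$ and $y$.

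First, the vanishing $\tau|_{x=\alpha}=0$ is immediate: since $(\alpha,\alpha)$ is a common zero of $f_n$ and $\tau_n$ and $y_f(\alpha)=\alpha$, we have $\tau(\alpha)=\tau_n(\alpha,\alpha)=0$. Next, by the chain rule,
\[
\frac{d\tau}{dx}=\frac{\partial \tau_n}{\partial x}+\frac{\partial \tau_n}{\partial y}\cdot\frac{dy_f}{dx}.
\]
Evaluating at $x=\alpha$ and using the common tangent equality $\frac{dy_f}{dx}|_{x=\alpha}=\frac{dy_\tau}{dx}|_{x=\alpha}$ together with the identity $\frac{dy_\tau}{dx}=-\frac{\partial\tau_n/\partial x}{\partial\tau_n/\partial y}$ obtained by implicit differentiation of $\tau_n(x,y_\tau(x))\equiv 0$, the right-hand side collapses to $0$. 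This handles the first-order vanishing.

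For the second derivative, I would differentiate once more and obtain
\[
\frac{d^2\tau}{dx^2}=\frac{\partial^2\tau_n}{\partial x^2}+2\frac{\partial^2\tau_n}{\partial x\partial y}\,y_f'+\frac{\partial^2\tau_n}{\partial y^2}(y_f')^2+\frac{\partial\tau_n}{\partial y}\,y_f''.
\]
On the other hand, differentiating $\tau_n(x,y_\tau(x))\equiv 0$ twice gives
\[
0=\frac{\partial^2\tau_n}{\partial x^2}+2\frac{\partial^2\tau_n}{\partial x\partial y}\,y_\tau'+\frac{\partial^2\tau_n}{\partial y^2}(y_\tau')^2+\frac{\partial\tau_n}{\partial y}\,y_\tau''.
\]
At $x=\alpha$ the slopes $y_f'$ and $y_\tau'$ agree (this is exactly the common tangent property), so subtracting the two identities, all terms not involving a second derivative of an implicit function cancel and one is left with
\[
\frac{d^2\tau}{dx^2}\Big|_{x=\alpha}=\frac{\partial\tau_n}{\partial y}\Big|_{(\alpha,\alpha)}\cdot\frac{d^2}{dx^2}(y_f-y_\tau)\Big|_{x=\alpha}.
\]

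The last step is a direct substitution: plug in the explicit value of $\frac{\partial \tau_n}{\partial y}(\alpha,\alpha)$ computed in Subsection \ref{subsec.tangent} and the explicit value of $\frac{d^2}{dx^2}(y_f-y_\tau)|_{x=\alpha}$ computed there as well. The two rational factors involving $(\alpha+1)\alpha(\alpha-3)$ and $(n\alpha^2-2n\alpha-1)$ cancel cleanly, leaving the claimed expression $\frac{2(3n-1)^2}{(\alpha-2)(n\alpha^2-2n\alpha-1)^2}$; non-vanishing then follows from $\alpha\neq 2$ (Lemma \ref{lem.neq}) and Lemma \ref{lem.n=-1}(1). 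The only minor point of care is the hypothesis $\frac{\partial\tau_n}{\partial y}(\alpha,\alpha)\neq 0$, which the reasoning above silently requires; this is precisely the condition $(n,\alpha)\neq(-1,1)$ under which $y_f$ is defined, so the proof is consistent. The $y$-variable version follows by an entirely symmetric calculation using the implicit functions $x_f,x_\tau$ and the analogous formula $\frac{d^2}{dy^2}(x_f-x_\tau)|_{y=\alpha}$ derived at the end of Subsection \ref{subsec.tangent}. I do not expect any genuine obstacle: the argument is essentially an exercise in the chain rule, with the nontrivial content -- the common tangent property and the explicit expressions for the partial derivatives -- already supplied by Theorem \ref{thm.tangent}.
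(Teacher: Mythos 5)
Your proposal is correct and follows essentially the same route as the paper's proof: the chain rule plus the common tangent property for the first-order vanishing, the subtraction of the twice-differentiated identity $\tau_n(x,y_\tau(x))\equiv 0$ to isolate $\frac{\partial\tau_n}{\partial y}\cdot\frac{d^2}{dx^2}(y_f-y_\tau)$, and then substitution of the explicit partial derivatives from Subsection \ref{subsec.tangent}. The only point you leave unverified is the final sign in the product of $\frac{\partial\tau_n}{\partial y}(\alpha,\alpha)$ with $\frac{d^2}{dx^2}(y_f-y_\tau)|_{x=\alpha}$ (which depends on the sign convention for $\tau_n$ used in Subsection \ref{subsec.tangent}), but this does not affect the structure or validity of the argument.
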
 

\begin{proof} 
The Leibniz rule for two variable function and Theorem \ref{thm.tangent} yield 
\[\dfrac{d\tau}{dx}=\dfrac{d}{dx}\tau_n(x,y_f(x))=\dfrac{\der \tau_n}{\der x}+\dfrac{\der \tau_n}{\der y}\dfrac{d y_f}{dx} 
=\dfrac{\der \tau_n}{\der y}(\dfrac{\der\tau_n/\der x}{\der \tau_n/\der y}+\dfrac{d y_f}{dx})
=\dfrac{\der \tau_n}{\der y}(\dfrac{d y_f}{dx}-\dfrac{d y_\tau}{dx})\underset{x=\alpha}=0.\]
Again by the Leibniz rule, the second derivatives of the equality $0=\tau_n(x,y_\tau(x))$ of functions in $x$ is given by 
\[0=\dfrac{d^2}{dx^2}\tau(x,y_\tau(x))
=\dfrac{\der^2 \tau_n}{\der x^2}+2\dfrac{\der^2 \tau_n}{\der x \der y}\dfrac{dy_\tau}{dx}+\dfrac{\der^2 \tau_n}{\der y^2}(\dfrac{dy_\tau}{dx})^2+\dfrac{\der \tau_n}{\der y}\dfrac{d^2 y_\tau}{dx^2}.\]
By this equality and a similar calculation, we obtain 
\[\dfrac{d^2\tau}{dx^2}=\dfrac{d^2}{dx^2}\tau_n(x,y_f(x))
=\dfrac{\der^2 \tau_n}{\der x^2}+2\dfrac{\der^2 \tau_n}{\der x \der y}\dfrac{dy_f}{dx}+\dfrac{\der^2 \tau_n}{\der y^2}(\dfrac{dy_f}{dx})^2+\dfrac{\der \tau_n}{\der y}\dfrac{d^2 y_f}{dx^2}=\dfrac{\der \tau_n}{\der y}\dfrac{d^2}{dx^2}(y_f-y_\tau),\]
whose value at $x=\alpha$ is given by 
\[\dfrac{2(n\alpha^2-2n\alpha-1)}{(\alpha+1)\alpha(\alpha-2)^2(\alpha-3)} \frac{-(3n-1)^2(\alpha+1)\alpha(\alpha-2)(\alpha-3)}{(n\alpha^2-2n\alpha-1)^3}=\dfrac{2(3n-1)^2}{(\alpha-2)(n\alpha^2-2n\alpha-1)^2} \neq 0.\] 

Similar calculations yield the equalities for $x_f$ and $x_\tau$. 
\end{proof} 

\begin{cor} \label{cor.tauk} 
Any root of $\tau_n(x,y_f(x))$ and $\tau_n(x_f(y),y)$ is a root of $k_n$. 
\end{cor}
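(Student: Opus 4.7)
The plan is to show that any root $x_0$ of $\tau_n(x, y_f(x))$ forces $(x_0, y_f(x_0))$ to be a common zero of $f_n$ and $\tau_n$ in $\C^2$, and then to appeal to the structural results already established in Subsections 1.4--1.5 to conclude that $x_0$ is a root of $k_n$.

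First I would observe that $y_f$ is, by construction, the implicit function of $f_n = 0$ in a neighborhood of a non-acyclic point $(\alpha,\alpha)$, so we automatically have $f_n(x_0, y_f(x_0)) = 0$ for any $x_0$ in the domain of $y_f$. If in addition $\tau_n(x_0, y_f(x_0)) = 0$, then $(x_0, y_f(x_0))$ lies on the intersection of the two curves $f_n = 0$ and $\tau_n = 0$. Now Proposition \ref{prop.x=y} pins every such intersection point onto the diagonal $x = y$, whence $y_f(x_0) = x_0$. This reduces the problem to showing that any common zero of the one-variable polynomials $f_n(x,x)$ and $\tau_n(x,x)$ is a root of $k_n$, which is exactly the content of Proposition \ref{prop.k2} together with the explicit product formula
\[
k_n(x) \,=\, \mathrm{sgn}(n)\prod_{0<l\leq (|3n-1|-1)/2} \bigl(1-2\cos\tfrac{2l\pi}{3n-1}-x\bigr)
\]
exhibited in its proof (combined with Proposition \ref{prop.roots} identifying the common root set).

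The case of roots of $\tau_n(x_f(y), y)$ is entirely symmetric: one swaps the roles of $x$ and $y$, uses that $x_f$ is an implicit function of $f_n = 0$ near $(\alpha,\alpha)$, and applies Proposition \ref{prop.x=y} once more to force $x_f(y_0) = y_0$, reducing again to the diagonal. There is no genuine obstacle here: the corollary is essentially a bookkeeping consequence of the intersection analysis already carried out, and no new estimate or identity needs to be introduced.
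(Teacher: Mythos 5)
Your proposal is correct and follows essentially the same route as the paper: the paper's proof sets $\beta=y_f(\alpha)$, notes $f_n(\alpha,\beta)=\tau_n(\alpha,\beta)=0$, and invokes Theorem \ref{thm.na} (which packages exactly the Propositions \ref{prop.x=y} and \ref{prop.roots} you cite, with Proposition \ref{prop.k2} identifying the common roots as the roots of $k_n$). You have merely unpacked that theorem into its constituent steps, which is fine.
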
 

\begin{proof}
Let $x=\alpha$ be a root of $\tau_n(x,y_f(x))$ and put $\beta=y_f(\alpha)$. 
Then we have $f_n(\alpha,\beta)=0$ and $\tau_n(\alpha,\beta)=0$. Hence by Theorem \ref{thm.na}, we have $\alpha=\beta$ and $x=\alpha$ is a root of $k_n(x)$. 
A similar argument holds for $\tau_n(x_f(y),y)$. 
\end{proof}

\subsection{Tables and graphs} \label{subsec.tables} 
Here we attach the tables of our polynomials $f_n(x,x)$, $\tau_n(x,x)$, $g_n$, $h_n$, $k_n$ for $|n|\leq5$ and $f_n(x,y),$ $\tau_n(x,y)$ for $|n|\leq3$. 

\begin{center}
{\footnotesize 

\begin{tabular}{c||c}
$n$&$f_n(x,x)$\\
\hline  \hline
$-5$&$-(x - 1) (x^2 - 2 x - 1) (x^4 - 4 x^3 + 2 x^2 + 4 x - 1) (x^8 - 8 x^7 + 20 x^6 - 8 x^5 - 30 x^4 + 24 x^3 + 12 x^2 - 8 x - 1)$\\
$-4$&$(x^6 - 7 x^5 + 15 x^4 - 6 x^3 - 11 x^2 + 6 x + 1) (x^6 - 5 x^5 + 5 x^4 + 6 x^3 - 7 x^2 - 2 x + 1)$\\
$-3$&$-(x - 1) (x^2 - 3 x + 1) (x^2 - x - 1) (x^4 - 4 x^3 + x^2 + 6 x + 1)$\\
$-2$&$(x^3 - 4 x^2 + 3 x + 1) (x^3 - 2 x^2 - x + 1)$\\
$-1$&$-(x - 1) (x^2 - 2 x - 1)$\\ \hline
0&1\\
1&$x-1$\\ \hline
2&$-(x^2 - 3 x + 1) (x^2 - x - 1)$ \\
3&$(x - 1) (x^2 - 2 x - 1) (x^4 - 4 x^3 + 2 x^2 + 4 x - 1)$\\
4&$-(x^5 - 6 x^4 + 10 x^3 - x^2 - 6 x + 1) (x^5 - 4 x^4 + 2 x^3 + 5 x^2 - 2 x - 1)$\\ 
5&$(x - 1) (x^3 - 4 x^2 + 3 x + 1) (x^3 - 2 x^2 - x + 1) (x^6 - 6 x^5 + 8 x^4 + 8 x^3 - 13 x^2 - 6 x + 1)$ 
\end{tabular}

\

\begin{tabular}{c||c}
$n$&$\tau_n(x,x)$\\
\hline \hline
$-5$&$-2 (x - 1) (x^2 - 3 x + 1) (x^2 - 2 x - 1) (x^4 - 4 x^3 + 2 x^2 + 4 x - 1) (x^4 - 3 x^3 - x^2 + 3 x + 1)$\\
$-4$&$2 (x - 1) x (x^2 - 2 x - 2) (x^6 - 7 x^5 + 15 x^4 - 6 x^3 - 11 x^2 + 6 x + 1)$\\
$-3$&$-2 (x^2 - 3 x + 1) (x^2 - x - 1) (x^3 - 3 x^2 + 1)$\\
$-2$&$2 x (x^3 - 4x^2 + 3x + 1)$\\
$-1$&$-2(x-1)$\\ \hline
0&0\\
1&$2$\\ \hline
2&$-2 x (x^2 - 3 x + 1)$\\
3&$2 (x - 1) (x^2 - 2 x - 1) (x^3 - 3 x^2 + 1)$\\
4&$-2 (x - 1) x (x^2 - 2 x - 2) (x^5 - 6 x^4 + 10 x^3 - x^2 - 6 x + 1)$\\
5&$2 (x^2 - 3 x + 1) (x^3 - 4 x^2 + 3 x + 1) (x^3 - 2 x^2 - x + 1) (x^4 - 3 x^3 - x^2 + 3 x + 1)$\\
\end{tabular}

\

\begin{tabular}{c||c}
$n$&$k_n$\\ 
\hline  \hline
$-5$&$(x - 1) (x^2 - 2 x - 1) (x^4 - 4 x^3 + 2 x^2 + 4 x - 1)$\\ 
$-4$&$-x^6 + 7 x^5 - 15 x^4 + 6 x^3 + 11 x^2 - 6 x - 1$\\ 
$-3$&$-(x^2 - 3 x + 1) (x^2 - x - 1)$\\ 
$-2$&$x^3 - 4 x^2 + 3 x + 1$\\ 
$-1$&$x-1$\\ \hline 
0&$-1$\\
1&1\\ \hline 
2&$x^2 - 3 x + 1$\\
3&$-(x - 1) (x^2 - 2 x - 1)$\\
4&$-x^5 + 6 x^4 - 10 x^3 + x^2 + 6 x - 1$\\
5&$(x^3 - 4 x^2 + 3 x + 1) (x^3 - 2 x^2 - x + 1)$
\end{tabular} 

\

\begin{tabular}{c||c|c}
$n$&$g_n$&$h_n$\\
\hline  \hline
$-5$&$-(x^8 - 8 x^7 + 20 x^6 - 8 x^5 - 30 x^4 + 24 x^3 + 12 x^2 - 8 x - 1)$&$-(x - 1) (x^2 - 2 x - 1) (x^4 - 4 x^3 + 2 x^2 + 4 x - 1)$\\
$-4$&$-(x^6 - 5 x^5 + 5 x^4 + 6 x^3 - 7 x^2 - 2 x + 1)$&$- (x - 1) x (x^2 - 2 x - 2)$\\
$-3$&$(x - 1)  (x^4 - 4 x^3 + x^2 + 6 x + 1)$&$x^3 - 3 x^2 + 1$\\
$-2$&$x^3 - 2 x^2 - x + 1$&$x $\\
$-1$&$-(x^2-2x-1)$&$-1$\\ \hline
0&$-1$&0\\
1&$x-1$&1\\ \hline
2&$-(x^2 - x - 1)$&$-x$ \\
3&$-(x^4 - 4 x^3 + 2 x^2 + 4 x - 1)$&$- (x^3 - 3 x^2 + 1)$\\
4&$x^5 - 4 x^4 + 2 x^3 + 5 x^2 - 2 x - 1$&$ (x - 1) x (x^2 - 2 x - 2) $\\ 
5&$(x - 1)(x^6 - 6 x^5 + 8 x^4 + 8 x^3 - 13 x^2 - 6 x + 1)$ &$ (x^2 - 3 x + 1)  (x^4 - 3 x^3 - x^2 + 3 x + 1)$
\end{tabular}

\

\begin{tabular}{c||c}
$n$&$f_n(x,y)$\\
\hline  \hline
$-3$& $y^6-(3 x^2+1) y^5 + (3 x^4 + 8 x^2-5) y^4- (x^6 +13 x^4 -6 x^2-4) y^3$ \\
&$+ (6 x^6 + 11 x^4 - 24 x^2+6) y^2  - (12 x^6 - 16 x^4 - 2 x^2+3) y +(8 x^6 - 20 x^4 + 12 x^2 - 1)$\\ 
$-2$&$y^4 -(2 x^2 + 1) y^3+ (x^4 + 5 x^2 - 3) y^2 - (4 x^4 - x^2 - 2) y + (4 x^4  - 6 x^2 + 1)$\\ 
$-1$& $y^2 -(x^2 +1) y + (2 x^2 - 1)$\\ \hline 
0&1\\
1&$y-1$\\ \hline
2& $y^3 -(x^2+1)y^2  + (3 x^2-2) y  - (2 x^2  - 1)$\\ 
3&$ y^5-(2 x^2 + 1) y^4 + (x^4 + 6 x^2 - 4) y^3 -( 5 x^4-3) y^2 + (8 x^4 - 11 x^2 + 3) y  - (4 x^4 - 6 x^2 + 1)$  
\end{tabular}

\ 

\begin{tabular}{c||c}
$n$&$\tau_n(x,y)$\\
\hline  \hline
$-3$&$-2 (x^2 y - 2 x^2 - y^2 + 1) (x^3 y - 2 x^3 - x^2 y + 2 x^2 - x y^2 + 2 x + y^2 - 1)$\\
$-2$&$2 (x^3 y - 2 x^3 - x^2 y + 2 x^2 - x y^2 + x + y^2)$\\
$-1$&$-2 (x - 1)$\\ \hline
0&0\\
1&$2$\\ \hline
2&$-2 (x^2 y - 2 x^2 + x - y^2)$\\
3&$2 (x^2 y - 2 x^2 - y^2 + 1) (x^2 y - 2 x^2 + x - y^2 + 1)$
\end{tabular}  

}
\end{center}

We also attach the graphs of $f_n(x,y)=0$ and $\tau_n(x,y)=0$ in $\R^2$ for $n=-3,-2,-1,2,3$ drawn by Mathematica 12.0 \cite{Mathematica12}, where the blue curves are $f_n(x,y)=0$ and the orange curves are $\tau_n(x,y)=0$. 
 
\includegraphics[width=5cm]{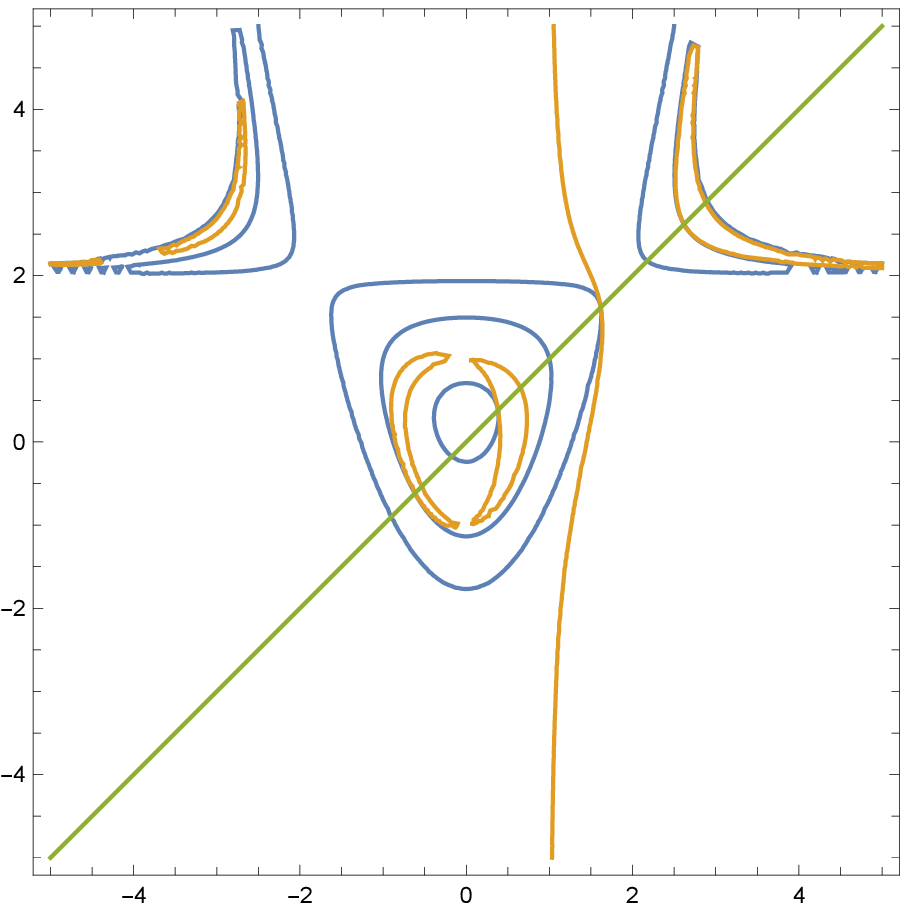} \ 
\includegraphics[width=5cm]{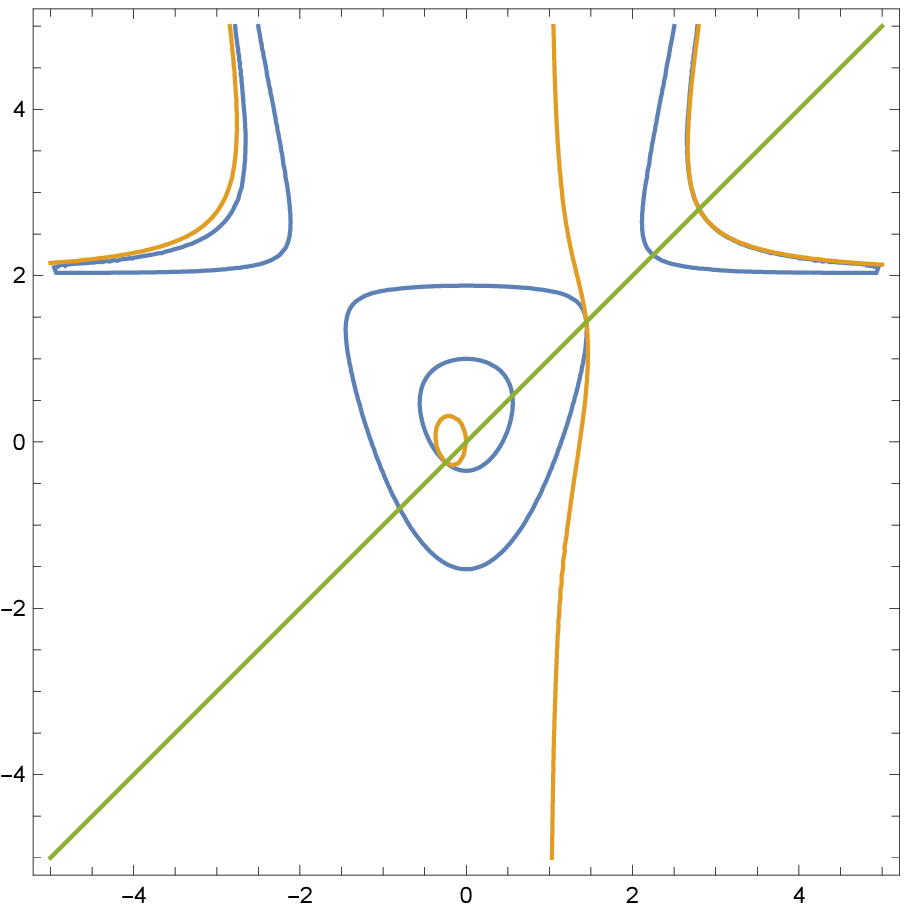} \ 
\includegraphics[width=5cm]{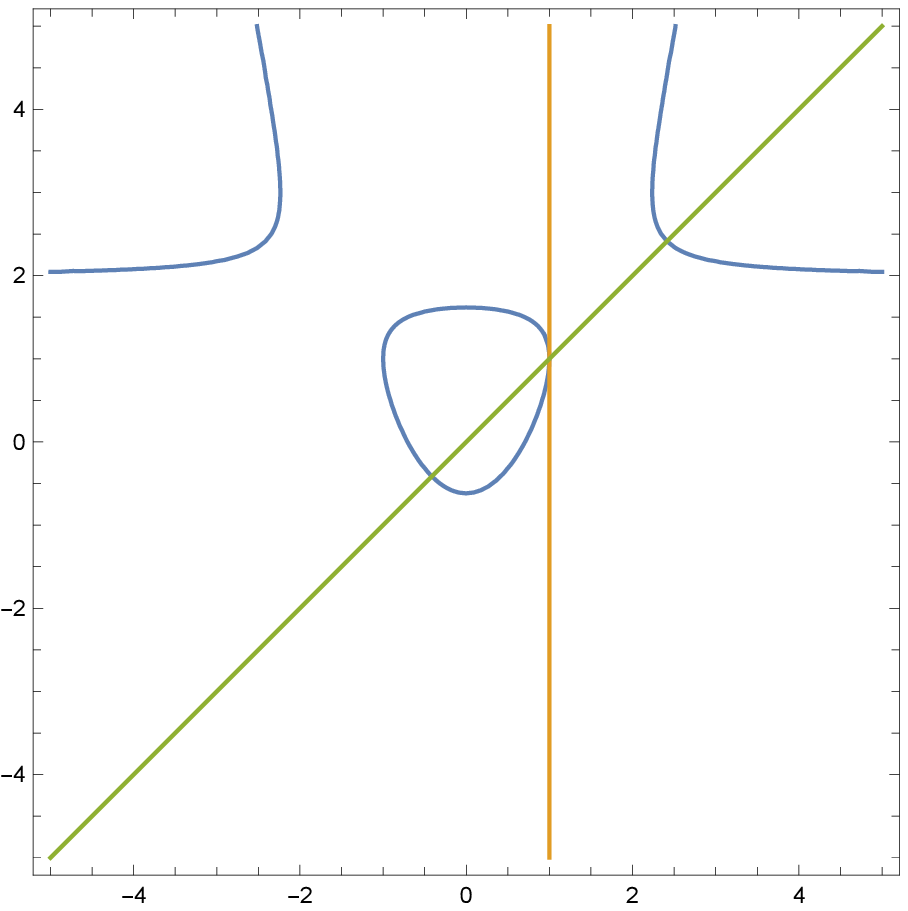} \ \\
\phantom{.} \hspace{23mm} $n=-3$ \hspace{37mm} $n=-2$ \hspace{37mm}  $n=-1$ 

\includegraphics[width=5cm]{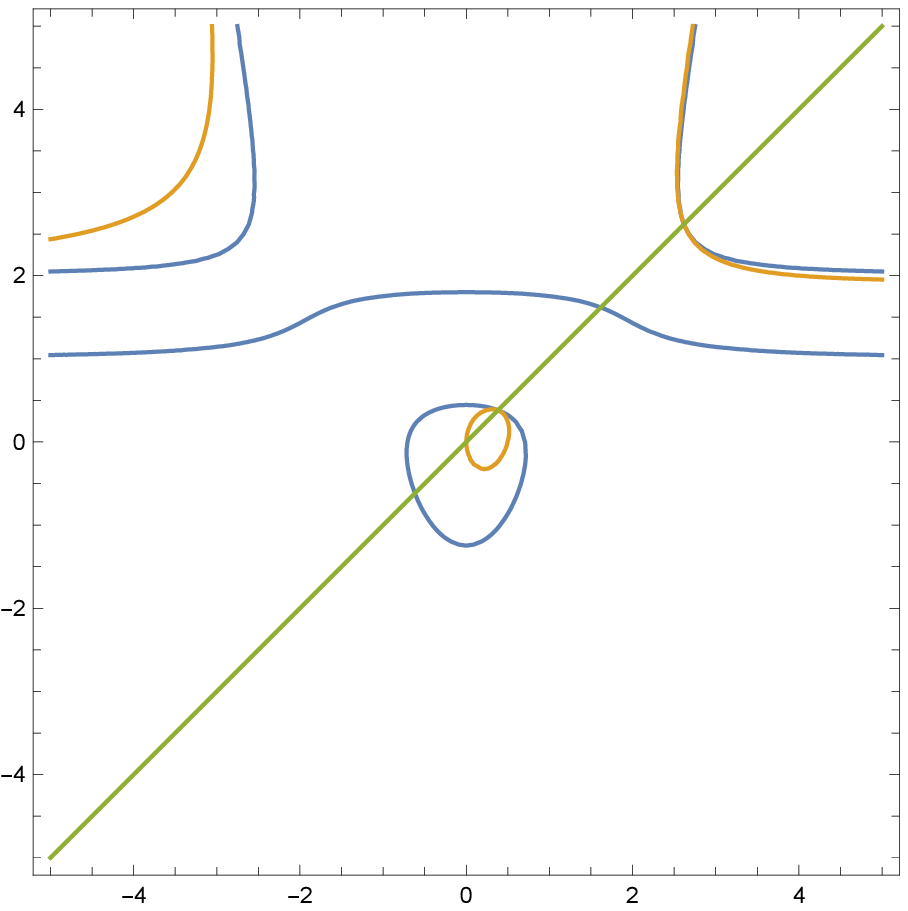} \ 
\includegraphics[width=5cm]{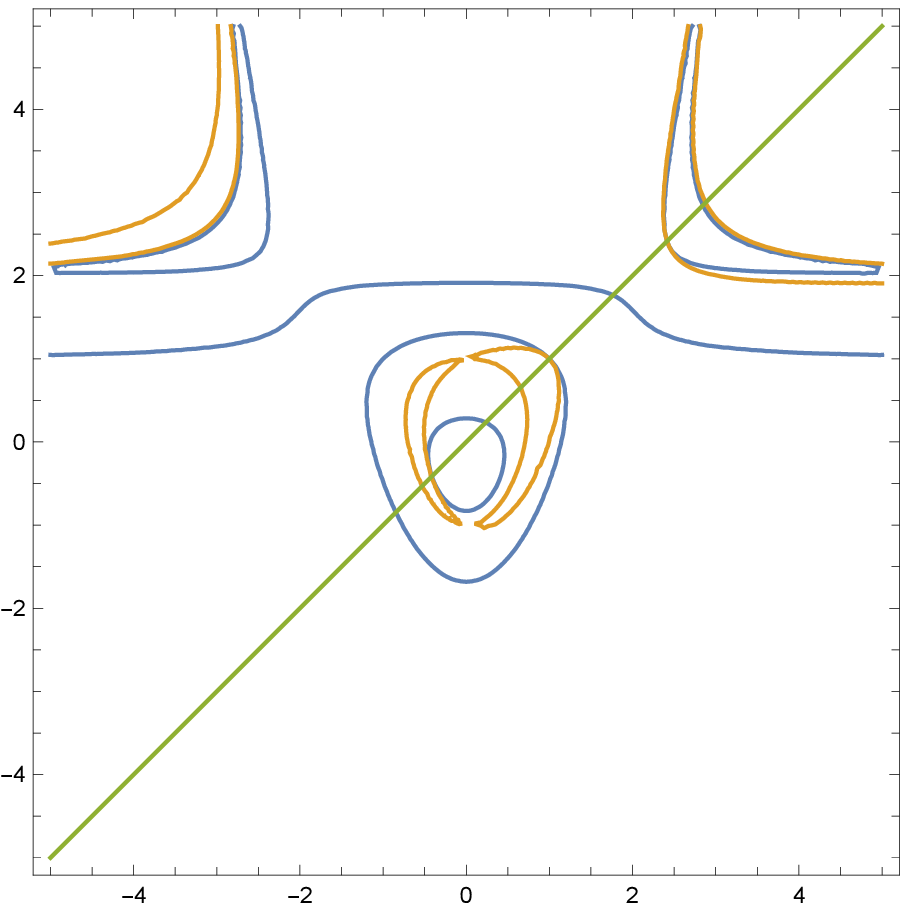} \\ 
\phantom{.} \hspace{23mm} $n=2$ \hspace{41mm} $n=3$  

\begin{rem} \label{rem.tangent} 
We may observe in the tables and graphs above that 
\begin{itemize} 
\item[(i)] $f_n(x,y)$ and $\tau_n(x,y)$ have a common root only on $x=y$, 
\item[(ii)] $g_n$ and $h_n$ have no common root, 
\item[(iii)] all roots of $f_n(x,x)$ and $\tau_n(x,x)$ are real numbers, 
\item[(iv)] $f_n(x,y)=0$ and $\tau_n(x,y)=0$ have a common tangent line at every intersection point.  
\end{itemize} 
\end{rem}

\section{Characterization} 
\subsection{$(-3)$-Dehn surgery} 
For a coprime pair of $q_1,q_2\in \Z$, let $M_{q_1/q_2}$ denote the result of the $q_1/q_2$-Dehn surgery along the twist knot $J(2,2n)$. Then we have 
$\pi_1(M_{q_1/q_2})=\langle a,b \,|\, aw^n=w^nb,\, a^{q_1}({\ol{w}^{n}}w^{n})^{q_2}=1\rangle$ for $\ol{w}=ba^{-1}b^{-1}a$, where 
the preferred longitude corresponding to a chosen meridian $\mu=a$ 
is $\lambda_a={\ol{w}^{n}}w^{n}$ 
by \cite[Section 2]{HosteShanahan2004JKTR}. 
(Note that the longitude in \cite{TranYamaguchi2018CMB} the inverse of this $\lambda_a$). 

Theorem \ref{thm.Dehn} precisely states the following: 
Let $\rho:\pi_n:=\pi_1(S^3-J(2,2n))\to \SL_2(\C)$ be an irreducible representation of the twist knot group. 
Then ${\rm tr}\rho(a)={\rm tr}\rho(ab)$ holds if and only if $\rho$ factors through the group $\pi_1(M_{-3})$ of $(-3)$-Dehn surgery, namely, there is a commutative diagram 
\[\xymatrix{
\pi_n \ar[r]^{\rho} \ar@{->>}[d]
& {{\rm SL}_2(\C)} \\ 
\pi_{1}(M_{-3}) \ar[ur] \ 
& \\
}\]
for the natural surjective homomorphism $\pi_n\surj \pi_1(M_{-3})$. 

The following remarks are due to several people: 
\begin{rem} \label{rem.Dehn} 
(1)  If we consider $J(-2,2n)$ instead, then the similar theorem holds for the $3$-Dehn surgeries. 

(2) The twist knot $J(2,2n)$ is hyperbolic iff $n\neq 0,1$. 
For each $n\neq 0,1$, 
the $(-3)$-Dehn surgery is an exceptional surgery, that is, $M_{-3}$ is not a hyperbolic manifold, and indeed is a small Seifert fibered space \cite[Theorem 1.1]{BrittenhamWu2001}. 
 
(3) Any irreducible metabelian $\SL_2(\C)$-representation of any knot group lies on ``$x=0$'' \cite[Proposition 1.1]{Nagasato2007KJM}, hence factors through the $4$-Dehn surgery. 
If an irreducible $\SL_2(\C)$-representation $\rho$ of a twist knot factors through the $4$-Dehn surgery, then $\rho$ is a metabelian representation \cite[Lemma 2.13, Proposition 3.1]{TranYamaguchi2018CMB}. 
\end{rem}

\subsection{Eigenvalues $M$ and $L$} 
In this subsection, we prove Theorem \ref{thm.Dehn}. 
Let $\rho:\pi_n \to \SL_2(\C)$ be an irreducible representation with $(\tr\rho(a),\tr\rho(ab))=(x,y)$. 
Then up to conjugate we may assume 
$$\rho(a)=\spmx{M&1\\0&M^{-1}}, \ \rho(b)=\spmx{M&0\\-u&M^{-1}}$$
with the matrix equality $\rho(aw^n)=\rho(w^nb)$, which is equivalent to $\Phi_n(x,u)=0$ for the Riley polynomial $\Phi_n(x,u)\in \Z[x,u]$. This $\rho$ is called Riley's representation. 
Here we have $x=M+M^{-1}$ and $-u=y-x^2+2$ for $x=\tr \rho(a)=M+M^{-1}$ and $y=\tr \rho(ab)$, hence $\Phi_n(x,u)=f_n(x,y)$.  

Since $\mu=a$ and $\lambda_a$ commute, we have $\rho(\lambda_a)=\spmx{L&*\\0&L^{-1}}$ for some $L$. 
The following cusp formula plays a key role in the proof of Theorem \ref{thm.Dehn}. 
\begin{prop}[{\cite[Equality (5.9)]{HosteShanahan2004JKTR}}] \label{prop.(5.9)}
We have $u=\dfrac{(1-M^2)(1-L)}{L+M^2}$. 
\end{prop}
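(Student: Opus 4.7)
The plan is to exploit the upper-triangular shape of $\rho(a)$ together with the commutation $[\rho(a),\rho(\lambda_a)]=1$: this reduces $L$ to the first coordinate of $\rho(\lambda_a)\,e_1 = \rho(\ol{w})^n\rho(w)^n\,e_1$, where $e_1=(1,0)^T$. Since $\lambda_a$ lies in the peripheral torus with $\mu=a$, the matrix $\rho(\lambda_a)$ commutes with $\rho(a)$; assuming $M^2\neq 1$, the centralizer of $\rho(a)$ consists of upper-triangular matrices in the standard basis, so $\rho(\lambda_a)$ is itself upper-triangular with diagonal $(L,L^{-1})$, and in particular $\rho(\lambda_a)\,e_1 = L\,e_1$. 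Thus $L$ is determined by a single matrix-vector computation.

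Next, compute $\rho(w)$ and $\rho(\ol{w})$ directly from the given Riley forms for $\rho(a)$ and $\rho(b)$. A short calculation shows that both have the common trace
\[
z \;=\; \tr\rho(w) \;=\; \tr\rho(\ol{w}) \;=\; 2+4u+u^2-ux^2,
\]
where $x=M+M^{-1}$. Apply the Cayley--Hamilton identity $A^n = \mca{S}_n(\tr A)\,A - \mca{S}_{n-1}(\tr A)\,I$, valid for any $A\in\SL_2$, to expand $\rho(w)^n$ and $\rho(\ol{w})^n$ in terms of $\mca{S}_n(z)$, $\mca{S}_{n-1}(z)$, and the commutator matrices themselves. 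Apply these operators to $e_1$ in succession and read off the first coordinate; this expresses $L$ as a rational function of $M$, $u$, $\mca{S}_n(z)$, and $\mca{S}_{n-1}(z)$.

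Finally, the knot relation $aw^n=w^nb$ translates into $\rho(a)\rho(w)^n=\rho(w)^n\rho(b)$, whose $(2,1)$-entry is exactly the Riley polynomial identity $\Phi_n(x,u) = (y-1)\mca{S}_n(z)-\mca{S}_{n-1}(z) = 0$. Substituting $\mca{S}_{n-1}(z)=(y-1)\mca{S}_n(z)$ into the expression for $L$ eliminates $\mca{S}_{n-1}$, and after algebraic simplification the remaining factor of $\mca{S}_n(z)$ also cancels, leaving a pure relation among $M$, $u$, and $L$ which rearranges to the claimed formula $u=(1-M^2)(1-L)/(L+M^2)$.

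The main obstacle is precisely this last cancellation: one must verify that the apparent $n$-dependence drops out completely. A clean way to organize the calculation is to perform all reductions inside the quotient ring $\Z[M^{\pm1},u,\mca{S}_n,\mca{S}_{n-1}]/(\Phi_n)$, treating the Chebyshev quantities as formal symbols subject to a single relation, so that a potentially messy induction on $n$ is replaced by a single algebraic computation. Useful anchors are the abelian case $u=0$, where $\rho(\lambda_a)=I$ and $L=1$, consistent with the right-hand side vanishing as $(1-M^2)(1-1)/(1+M^2)=0$, and a direct check at small $|n|$ to confirm the normalization.
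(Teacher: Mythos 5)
The paper does not actually prove this proposition: it is imported verbatim from Hoste--Shanahan's equality (5.9), so any derivation you give is supplementary. Your overall strategy is the natural one and is essentially workable: since $\rho(\lambda_a)$ commutes with $\rho(a)$ and is upper triangular with $(1,1)$-entry $L$, one has $L=(\rho(\ol{w})^n\rho(w)^n)_{11}$, and expanding each $n$-th power by $A^n=\mca{S}_n(\tr A)A-\mca{S}_{n-1}(\tr A)I_2$ (both $\rho(w)$ and $\rho(\ol{w})$ do have the same trace $z=u^2+(4-x^2)u+2$, as you claim) gives $L=\mca{S}_n(z)^2\,(\ol{W}W)_{11}-\mca{S}_n(z)\mca{S}_{n-1}(z)\,(\ol{W}_{11}+W_{11})+\mca{S}_{n-1}(z)^2$, a \emph{homogeneous quadratic} form in $(\mca{S}_n,\mca{S}_{n-1})$.

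The one genuine gap is in your cancellation step. You propose to work in $\Z[M^{\pm1},u,\mca{S}_n,\mca{S}_{n-1}]/(\Phi_n)$ ``subject to a single relation'' and claim the residual factor of $\mca{S}_n(z)$ cancels after substituting $\mca{S}_{n-1}=(y-1)\mca{S}_n$. It cannot: after that substitution $L$ still carries an overall factor $\mca{S}_n(z)^2$, and no amount of algebra modulo the single linear relation $\Phi_n$ removes it, precisely because $L$ is homogeneous of degree $2$ in $(\mca{S}_n,\mca{S}_{n-1})$. You must also impose the norm identity $\mca{S}_n(z)^2+\mca{S}_{n-1}(z)^2-z\,\mca{S}_n(z)\mca{S}_{n-1}(z)=1$ (Lemma \ref{lem.Cheb2} of this paper), which under $\mca{S}_{n-1}=(y-1)\mca{S}_n$ gives $\mca{S}_n(z)^2=\bigl(1+(y-1)^2-z(y-1)\bigr)^{-1}$ and hence the manifestly $n$-independent expression $L=\dfrac{(\ol{W}W)_{11}-(y-1)(\ol{W}_{11}+W_{11})+(y-1)^2}{1+(y-1)^2-z(y-1)}$. (This is exactly the same mechanism the paper uses in the proof of Proposition \ref{prop.x=y}, where \textcircled{\scriptsize 1} is combined with Lemma \ref{lem.Cheb2} to get \textcircled{\scriptsize 4}.) With that relation added, what remains is a finite, $n$-free computation of $W_{11}$, $\ol{W}_{11}$, $(\ol{W}W)_{11}$ in $M$ and $u$, which you assert but do not carry out; you should also say a word about the excluded case $M^2=1$ (e.g.\ by Zariski density of the locus $M^2\neq1$ on each component of the representation variety, or by direct check).
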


\begin{lem} \label{lem.M+1} 
If $\rho$ factors through $\pi_1(M_{-3})$, then $M+1\neq 0$. 
\end{lem}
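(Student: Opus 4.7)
The plan is to argue by contradiction using the surgery relation at the matrix level. Assume $M+1=0$, so $M=-1$ and $x=-2$. In Riley's normal form this gives $\rho(a)=\spmx{-1 & 1\\ 0 & -1}$ and $\rho(b)=\spmx{-1 & 0\\ -u & -1}$, and one computes directly $\rho(a)^3=\spmx{-1 & 3\\ 0 & -1}$. Since $\rho$ factors through $\pi_1(M_{-3})$, the surgery relation $a^{-3}\lambda_a=1$ forces $\rho(\lambda_a)=\rho(a)^3$; in particular, the $(1,2)$-entry of $\rho(\lambda_a)$ must equal $3$.

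I would next compute $\rho(\lambda_a)=\rho(\ol w)^n\rho(w)^n$ at $M=-1$. After working out $\rho(w)$ and $\rho(\ol w)$ in Riley's form (both have trace $z=u^2+2$) and observing that their sum and product take the explicit forms
\[
\rho(w)+\rho(\ol w)=\pmx{z & 2u\\ -2u^2 & z},\qquad
\rho(\ol w)\rho(w)=\pmx{1-2u^3 & 2u(1-u)\\ -2u^2(1+u+u^2) & 1-2u^3},
\]
the Cayley--Hamilton expansion $\rho(w)^n=\mca{S}_n(z)\rho(w)-\mca{S}_{n-1}(z)I$ (Lemma \ref{lem.Cheb}(2)), and analogously for $\rho(\ol w)^n$, yields
\[
\rho(\lambda_a)=\mca{S}_n^2\,\rho(\ol w)\rho(w)-\mca{S}_n\mca{S}_{n-1}\bigl(\rho(\ol w)+\rho(w)\bigr)+\mca{S}_{n-1}^2\,I.
\]
Reading off the $(1,2)$-entry then gives the compact expression
\[
\rho(\lambda_a)_{12}=2u\,\mca{S}_n(z)\bigl[(1-u)\mca{S}_n(z)-\mca{S}_{n-1}(z)\bigr].
\]

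The punchline will be that the bracketed quantity is exactly $f_n(-2,\,2-u)=\Phi_n(-2,u)$, which vanishes on the character variety for any representation of $\pi_n$. Hence $\rho(\lambda_a)_{12}=0$, contradicting the surgery requirement $\rho(\lambda_a)_{12}=3$; therefore $M+1\neq 0$. The only real obstacle is the routine matrix bookkeeping for $\rho(w)$ and $\rho(\ol w)$ at $M=-1$; the conceptual heart of the argument is the pleasant coincidence that the $(1,2)$-entry of $\rho(\lambda_a)$ factors through the Riley polynomial itself, so no supplementary irreducibility or Chebyshev-root analysis is needed.
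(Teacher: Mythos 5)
Your matrix bookkeeping for $\rho(w)$, $\rho(\ol w)$, their sum and product, and the Cayley--Hamilton expansion is all correct, but the punchline fails: the quantity you compute is not the $(1,2)$-entry of the longitude. The tell-tale symptom is that your formula makes $\rho(\ol w)^n\rho(w)^n$ \emph{lower} triangular, with $(2,1)$-entry $-2u^2\mca{S}_n(z)\bigl[(1+u+u^2)\mca{S}_n(z)-\mca{S}_{n-1}(z)\bigr]=-2u^3(u+2)\mca{S}_n(z)^2$, which is generically nonzero --- whereas $\rho(\lambda_a)$ must commute with the parabolic matrix $\rho(a)=\spmx{-1&1\\0&-1}$ and hence be \emph{upper} triangular. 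A concrete check: for the trefoil ($n=1$, $u=1$, a genuine irreducible representation with $x=-2$) one gets $\rho(\ol w)\rho(w)=\spmx{-1&0\\-6&-1}$, which does not commute with $\rho(a)$, while $\rho(w)\rho(\ol w)=\spmx{-1&6\\0&-1}$ does. So the product in the order you took cannot be the image of the preferred longitude (whatever convention explains the displayed word $\ol w^nw^n$); the longitude image is the product in the other order, whose $(1,2)$-entry is $2u\mca{S}_n(z)\bigl[(1+u+u^2)\mca{S}_n(z)-\mca{S}_{n-1}(z)\bigr]=2u^2(u+2)\mca{S}_n(z)^2=(2u+4)/u$, the cusp shape of \cite[Section 3.4.3]{DuboisHuynhYamaguchi2009} --- and this is \emph{not} identically zero (it equals $6$ in the trefoil example). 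The ``pleasant coincidence'' you found, namely that $(1-u)\mca{S}_n(z)-\mca{S}_{n-1}(z)=f_n(-2,2-u)$ vanishes, is real but contentless here: it only says that the off-diagonal entry on the wrong side of $\rho(\lambda_a)$ vanishes, i.e.\ that the longitude commutes with the parabolic meridian, which is automatic given $f_n=0$. It yields no contradiction with $\rho(\lambda_a)_{12}=3$.

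Consequently the step you hoped to skip is unavoidable. Equating the true $(1,2)$-entry $(2u+4)/u$ with $3$ forces $u=4$, and one must then rule out an irreducible representation with $x=-2$ and $u=4$, i.e.\ show that $f_n$ does not vanish at the corresponding point --- which is exactly what the paper does via the Chebyshev recursion (checking that the relevant values $f_n(-2,\cdot)$ form an increasing, hence nonvanishing, sequence). So your proposal is not a shortcut around the paper's argument: its key identity reads off the wrong matrix entry, and once corrected it collapses back to the paper's proof, including the supplementary root analysis you wanted to avoid.
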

\begin{proof} Suppose $M=-1$. Then we have $\rho(\lambda_a)=\rho(a^3)=\spmx{-1&1\\0&-1}^{\!\!3}=\spmx{-1&3\\0&-1}$. 
On the other hand, \cite[Section 3.4.3]{DuboisHuynhYamaguchi2009} yields $\rho(\lambda_a)=\spmx{-1&(2u+4)/u\\0&-1}$, where $c=(2u+4)/u$ is called the cusp shape of $J(2,2n)$. Hence we have $(2u+4)/u=3$, $u=4$, $y=x^2-u+2=4-4+2=2$, and $z=x^2-x^2y+2y-2=4+8+8-2=18$. 
Now we may verify $f_0(-2,2)=f_1(-2,2)=1$ and that $f_n(-2,2)=f_{-n+1}(-2,2)$ for $n\geq1$ are increasing sequences, hence $f_n(-2,2)\neq 0$ for any $n\in \Z$. This implies that we have no such irreducible representation, hence contradiction. 
\end{proof} 

\begin{lem} \label{lem.M4neq1} 
If $\rho$ factors through $\pi_1(M_{-3})$ and satisfies $x=y$, then $M^4\neq 1$. 
\end{lem}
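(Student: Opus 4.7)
The plan is to argue by contradiction, assuming $M^4=1$, so that $M\in\{1,-1,i,-i\}$. Lemma \ref{lem.M+1} immediately rules out $M=-1$, so it remains to dispose of the three cases $M=1$ and $M=\pm i$. The strategy is to combine the cusp formula of Proposition \ref{prop.(5.9)} with the extra relation coming from factoring through the $-3$-Dehn surgery, and then to check that each remaining value of $M$ produces a direct contradiction with either irreducibility or the character-variety equation $f_n(x,y)=0$.

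The hypothesis that $\rho$ factors through
$\pi_1(M_{-3})=\langle a,b\mid aw^n=w^nb,\ a^{-3}\lambda_a=1\rangle$
forces $\rho(\lambda_a)=\rho(a)^3$. A direct computation of $\rho(a)^3$ shows that its $(1,1)$-entry equals $M^3$, so from the upper-triangular form of $\rho(\lambda_a)$ we read off $L=M^3$. Substituting this into Proposition \ref{prop.(5.9)} gives
\[ u\;=\;\frac{(1-M^2)(1-M^3)}{M^3+M^2}\;=\;\frac{(1-M^2)(1-M^3)}{M^2(M+1)}. \]

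For $M=1$ this yields $u=0$, and then $\rho(b)=\spmx{1&0\\0&1}$ is diagonal, so $\rho(a)$ and $\rho(b)$ share the fixed vector $\spmx{1\\0}$, contradicting the irreducibility of $\rho$. For $M=\pm i$ one has $x=M+M^{-1}=0$, and the hypothesis $x=y$ gives $y=0$, so $z=2x^2-x^2y+y^2-2=-2$. Using $\mca{S}_n(-2)=(-1)^{n-1}n$ from Lemma \ref{lem.Cheb}, a short calculation yields
\[ f_n(0,0)\;=\;-\mca{S}_n(-2)-\mca{S}_{n-1}(-2)\;=\;(-1)^n(2n-1), \]
which is nonzero for every $n\in\Z$, contradicting the requirement from Proposition \ref{prop.f} that an irreducible representation satisfies $f_n(x,y)=0$. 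The only delicate point in the plan is making sure the surgery convention (the sign of the coefficient and the choice of longitude $\lambda_a$) correctly produces the relation $\lambda_a=a^3$; once this is in place the case analysis is elementary.
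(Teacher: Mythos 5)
Your proof is correct and ultimately reaches the same kind of contradiction as the paper (an allegedly irreducible representation at which $f_n\neq 0$, or at which irreducibility visibly fails), but your route through the four cases differs in places. The paper treats all of $M\in\{1,-1,\pm\sqrt{-1}\}$ uniformly: it computes $x=M+M^{-1}$, sets $y=x$, and checks via the recurrence $f_{n+1}-zf_n+f_{n-1}=0$ that $f_n(x,x)$ never vanishes. You instead quote Lemma \ref{lem.M+1} to dispose of $M=-1$ (perfectly legitimate, and shorter), and for $M=1$ you run the cusp formula with $L=M^3$ to get $u=0$ and hence reducibility; note that here you do not actually need Proposition \ref{prop.(5.9)} at all, since $x=y=2$ already gives $u=x^2-y-2=0$, which violates the irreducibility criterion $x^2-y-2\neq 0$ of Proposition \ref{prop.f}. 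For $M=\pm\sqrt{-1}$ your value $x=M+M^{-1}=0$ is the right one (the paper's printed ``$x=\pm 2\sqrt{-1}$'' is a slip), but your evaluation of $f_n(0,0)$ has a sign error: since $-\mca{S}_{n-1}(-2)=(-1)^{n+1}(n-1)$, one gets $f_n(0,0)=(-1)^n n+(-1)^{n+1}(n-1)=(-1)^n$, not $(-1)^n(2n-1)$. Both expressions are nonzero for every $n\in\Z$, so the contradiction survives and the argument stands; the surgery-convention point you flag at the end is indeed the only convention-sensitive step, and it is consistent with the presentation $a^{-3}\lambda_a=1$ used in the paper.
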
 
\begin{proof} Recall $z(x,x)=-x^3+3x^2-2=-(x-1)(x^2-2x-2)$.

If $M=1$, then $x=2$, $z=2$, $f_0=f_1=1$, and $f_n=2$ for every $n\neq0,1$. 

If $M=-1$, then $x=-2$, $z=-18$, $f_0=1$, $f_1=-3$, and $f_{n+1}+18f_n+f_{n-1}=0$. 

If $M=\pm\sqrt{-1}$, then $x=\pm 2\sqrt{-1}$, $z=\mp8\sqrt{-1}+5$, 
$f_0=1$, $f_1=\pm\sqrt{-1}-1$, and $f_{n+1}+(\pm8\sqrt{-1}-5)f_n+f_{n-1}=0$. 

In each case it is easy to see $f_n\neq 0$, hence contradiction. 
\end{proof}

\begin{proof}[Proof of Theorem \ref{thm.Dehn}] 
If Riley's irreducible representation $\rho$ factors through $\pi_1(M_{-3})$, then we have $L=M^3$. 
We have $u(L+M^2)=uM^2(M+1)$ and $(1-M^2)(1-L)=(1+M)(1-M)(1-M^3)
=(1+M)M^2((M+M^{-1})^2-(M+M^{-1})-2)=(1+M)M^2(x^2-x-2)$.
Since $M+1\neq 0$ by Lemma \ref{lem.M+1}, Proposition \ref{prop.(5.9)} yields $u=x^2-x-2$. 
By $-u=y-x^2+2$, we obtain $y-x=0$. 

Conversely, suppose $x=y$. Proposition \ref{prop.(5.9)} yields $L=\dfrac{1-M^2-M^2u}{u+1-M^2} \cdots (\star)$. Note that $u+1-M^2\neq 0$, since otherwise we have $1-M^2-M^2u=u+1-M^2=0$, which yields $M^4=\pm1$, violating Lemma \ref{lem.M4neq1}. 
Since $u=x^2-x-2=M^2+M^{-2}-M-M^{-1}$, we have $1-M^2-M^2u=M^3(1-M-M^{-1}+M^{-2})$ and $u+1-M^2=(1-M-M^{-1}+M^{-2})$. Hence $(\star)$ yields $L=M^3$. Since $\rho(a)$ and $\rho(\lambda_a)$ commute, these two matrices are simultaneously diagonalizable. Therefore $L=M^3$ implies $\rho(a)^3=\rho(\lambda_a)$, 
hence $\rho$ factors through $\pi_1(M_{-3})$. 
\end{proof} 

\subsection{Doubly-indexed polynomials} \label{subsec.DIP}  
This subsection is optional. 
We introduce a doubly-indexed Chebyshev-like polynomials 
$\mf{d}_{m,n}\in \Z[x,y]$ and give an alternative proof for the only if part of Theorem \ref{thm.Dehn},
which is clearly workable over $\Z$, hence over a field with positive characteristic. 

Recall $\lambda_a=\ol{w}^n w^n$. 
Put $\mf{d}_{m,n}=\mf{d}_{m,n}(x,y)=\tr a^{-3}\ol{w}^mw^n \in \Z[x,y]$ for $m,n\in \Z$. 
Then we have 
$\mf{d}_{m+1,n}-z\mf{d}_{m,n}+\mf{d}_{m-1,n}=0$ and $\mf{d}_{m,n+1}-z\mf{d}_{m,n}+\mf{d}_{m,n-1}=0$ for every $m,n\in \Z$.

\begin{prop} \label{prop.b} 
We have 
$\mf{d}_{n,n}(x,x)-2=(x-2)(x+1)^2f_n(x,x)^2$ for every $n\in \Z$. 
\end{prop}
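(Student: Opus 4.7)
The plan is a Chebyshev-style reduction to three base-case identities in $\Z[x]$. Since $\tr\,w = \tr\,\ol w = z$, Cayley--Hamilton gives $\rho(w)^n = \mca S_n(z)\rho(w) - \mca S_{n-1}(z)I$ and likewise for $\ol w$. Multiplying these matrix identities and then applying $\tr\,\rho(a^{-3}\,\cdot\,)$ yields the master expansion
\[
\mf d_{n,n}(x,y) = \mca S_n(z)^2\,\mf d_{1,1} - \mca S_n(z)\mca S_{n-1}(z)(\mf d_{0,1}+\mf d_{1,0}) + \mca S_{n-1}(z)^2\,\mf d_{0,0}.
\]
At $y=x$, write $F_n := f_n(x,x) = (x-1)\mca S_n - \mca S_{n-1}$, so $F_n^2 = (x-1)^2\mca S_n^2 - 2(x-1)\mca S_n\mca S_{n-1} + \mca S_{n-1}^2$. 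Using Lemma~\ref{lem.Cheb2} to absorb the constant $-2$ via $\mca S_n^2 + \mca S_{n-1}^2 - z\mca S_n\mca S_{n-1} = 1$, matching coefficients of $\mca S_n^2$, $\mca S_n\mca S_{n-1}$, $\mca S_{n-1}^2$ reduces the proposition to three scalar identities: (i) $\mf d_{0,0}(x,x)-2 = (x-2)(x+1)^2$, (ii) $\mf d_{0,1}(x,x)+\mf d_{1,0}(x,x)-2z = 2(x-2)(x+1)^2(x-1)$, and (iii) $\mf d_{1,1}(x,x)-2 = (x-2)(x+1)^2(x-1)^2$.

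Identity (i) is immediate from $\mf d_{0,0} = \tr(a^{-3}) = x^3-3x$, since $x^3-3x-2 = (x-2)(x+1)^2$. For (ii), apply the Cayley--Hamilton trace identity $\tr(A^{-3}C) = -(x^2-1)\tr(AC) + (x^3-2x)\tr(C)$, which comes from Lemma~\ref{lem.Cheb}(2) applied to the sequence $p_k := \tr(A^kC)$ together with $\mca S_{-3}(x) = -(x^2-1)$ and $\mca S_{-4}(x) = -(x^3-2x)$. Combined with the elementary computations $\tr(aw) = \tr(a\ol w) = x(z-1)$ (via $a^2 = xa - I$ and cyclicity, using $\tr[a,b]=z$) and $\tr(w) = \tr(\ol w) = z$, this gives $\mf d_{0,1}(x,y) = \mf d_{1,0}(x,y) = x(x^2-1-z)$. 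Substituting $z = -x^3+3x^2-2$ at $y=x$ yields $x^2-1-z = (x-1)(x^2-x-1)$, and a short arithmetic check against $2(x-1)(x-2)(x+1)^2$ confirms (ii).

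The key step is (iii). Applying the Cayley--Hamilton identity once more with $C = \rho(\ol w w)$ gives $\mf d_{1,1}(x,y) = -(x^2-1)\tr(a\ol w w) + (x^3-2x)\tr(\ol w w)$, reducing (iii) to the explicit evaluation of $\tr(\ol w w)$ and $\tr(a\ol w w)$. A Riley matrix expansion using $\rho(a) = \spmx{M&1\\0&M^{-1}}$ and $\rho(b) = \spmx{M&0\\-u&M^{-1}}$ (with $u = x^2 - y - 2$) yields the compact form $\tr(\ol w w) = u^2 x^2(y-2) + 2$; a parallel but more involved computation produces $\tr(a\ol w w)$ as a polynomial in $x$ and $u$. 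Specializing $u = (x-2)(x+1)$ (the condition equivalent to $y=x$) and substituting into the Cayley--Hamilton identity then recovers the target factorization.

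The main obstacle is step (iii): the length-nine trace $\tr(a\ol w w)$ produces a bulky polynomial, and confirming that after substitution of $u = (x-2)(x+1)$ and simplification one obtains exactly $(x-2)(x+1)^2(x-1)^2$ requires careful symbolic bookkeeping of the Riley matrix entries. Once (iii) is verified, the master expansion and the reduction step produce $\mf d_{n,n}(x,x) - 2 = (x-2)(x+1)^2 F_n^2$ automatically for every $n \in \Z$.
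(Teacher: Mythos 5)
Your proof is correct, but it takes a genuinely different route from the paper's. The paper proves the identity by a double induction on $n$: it verifies the base cases $\mf{d}_{0,0}-2=(x-2)(x+1)^2$, $\mf{d}_{1,1}-2=(x-2)(x+1)^2(x-1)^2$, $\mf{d}_{1,0}=\mf{d}_{0,1}=x(x-1)(x^2-x-1)$, then runs the Chebyshev recurrence to get $\mf{d}_{n+1,n+1}=z^2\mf{d}_{n,n}-2z\mf{d}_{n,n-1}+\mf{d}_{n-1,n-1}$, and closes the induction by establishing the auxiliary identity $\mf{D}_0f_nf_{n-1}=\mf{d}_{n,n-1}-z$ (itself proved by a sub-induction on the off-diagonal terms $\mf{d}_{n,n-1}$). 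You instead use the closed form $A^n=\mca{S}_n(\tr A)A-\mca{S}_{n-1}(\tr A)I_2$ for both $\rho(w)^n$ and $\rho(\ol{w})^n$, obtaining the master expansion $\mf{d}_{n,n}=\mca{S}_n^2\,\mf{d}_{1,1}-\mca{S}_n\mca{S}_{n-1}(\mf{d}_{0,1}+\mf{d}_{1,0})+\mca{S}_{n-1}^2\,\mf{d}_{0,0}$ in one stroke, and then match coefficients against $(x-2)(x+1)^2\bigl((x-1)\mca{S}_n-\mca{S}_{n-1}\bigr)^2$ using Lemma~\ref{lem.Cheb2} to absorb the constant $-2$. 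What your route buys: it eliminates the induction entirely, never needs the off-diagonal quantity $\mf{d}_{n,n-1}$ or the auxiliary identity, and only requires the symmetric sum $\mf{d}_{0,1}+\mf{d}_{1,0}$ rather than the individual equality $\mf{d}_{1,0}=\mf{d}_{0,1}$; it also makes the validity for all $n\in\Z$ and over $\Z$ (hence in positive characteristic) immediate, which is exactly the point of this alternative proof in the paper. The cost is the same in both approaches: the base case $\mf{d}_{1,1}(x,x)-2=(x-2)(x+1)^2(x-1)^2$, which the paper likewise dispatches as ``a direct calculation of Riley's representation.'' Your intermediate formula $\tr(\ol{w}w)=u^2x^2(y-2)+2$ checks out, and your identities (i) and (ii) are verified correctly (note $x^2-1-z=(x-1)(x^2-x-1)$ at $y=x$, matching the paper's value of $\mf{d}_{1,0}$), so only the bookkeeping for $\tr(a\ol{w}w)$ remains, exactly as you flag.
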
 

\begin{proof} Assume $x=y$. A direct calculation of Riley's representation (or only of the trace relations) yields 
$\mf{d}_{0,0}-2=(x-2)(x+1)^2$ and $\mf{d}_{1,1}-2=(x-2)(x+1)^2(x-1)^2$. Put $\mf{D}_0=\mf{d}_{0,0}-2$. 
Then we in addition have $\mf{d}_{1,0}=\mf{d}_{0,1}=x(x-1)(x^2-x-1)$, hence an induction yields $\mf{d}_{n,n+1}=\mf{d}_{n+1,n}$ for every $n\in \Z$. 
Suppose that the equality $\mf{d}_{m,m}(x,x)-2=(x-2)(x+1)^2f_m(x,x)^2$ holds for $m=n-1, n$. 
By the Chebyshev recurrence formula, we have 
\begin{eqnarray*}
\mf{d}_{n+1,n+1}
&=&z\mf{d}_{n,n+1}-\mf{d}_{n-1,n+1}\\ 
&=&z(z\mf{d}_{n,n}-\mf{d}_{n,n-1})-(z\mf{d}_{n-1,n}-\mf{d}_{n-1,n-1})\\
&=&z^2\mf{d}_{n,n}-2z\mf{d}_{n,n-1}+\mf{d}_{n-1,n-1},
\end{eqnarray*} 
hence 
\begin{eqnarray*}
\mf{d}_{n+1,n+1}-2
&=&z^2(\mf{d}_{n,n}-2)-2z\mf{d}_{n,n-1}+(\mf{d}_{n-1,n-1}-2)+2z^2\\
&=&z^2\mf{D}_0f_n^2+\mf{D}_0f_{n-1}^2-2z(\mf{d}_{n,n-1}-z). \ \cdots \tc{1}
\end{eqnarray*} 
On the other hand, we have
\begin{eqnarray*}
\mf{D}_0f_{n+1}^2
&=&\mf{D}_0(zf_n-f_{n-1})^2\\
&=&z^2\mf{D}_0f_n^2+\mf{D}_0f_{n-1}^2-2z\mf{D}_0f_nf_{n-1}. \ \cdots \tc{2}
\end{eqnarray*} 
Now it suffices to show $\mf{D}_0f_nf_{n-1}=\mf{d}_{n,n-1}-z. \ \cdots \tc{3}$
We have $$\mf{d}_{1,0}-z=x(x-1)(x^2-x-1)-(-x^3+3x^2-2)=(x-1)(x+1)^2(x-2)=\mf{D}_0f_1f_0.$$ 
Again by induction, we have 
\begin{eqnarray*}
\mf{d}_{n,n-1}
&=&z\mf{d}_{n-1,n-1}-\mf{d}_{n-2,n-1}\\
&=&z(\mf{D}_0f_{n-1}^2+2)-(\mf{D}_0f_{n-1}f_{n-2}+z)\\
&=&\mf{D}_0(zf_{n-1}-f_{n-2})f_{n-1}+z\\
&=&\mf{D}_0f_nf_{n-1}+z,
\end{eqnarray*}
hence $\tc{3}$ holds. 
Thus by $\tc{1}$ and $\tc{2}$, we obtain $\mf{d}_{n+1,n+1}-2=\mf{D}_0f_{n+1}^2$. 

The similar argument also proves that the equalities for $m=n,n+1$ yields that for $m=n-1$. 
Thus we obtain the formula for every $n\in \Z$. 
\end{proof} 

\begin{proof}[Alternative partial proof of Theorem \ref{thm.Dehn}] 
If a representation $\rho$ is not on the line $x=2$, then both $\rho(a)$ and $\rho(\lambda_a)=\rho(\ol{w}^nw^n)$ have two distinct eigenvalues. 
Since $a$ and $\lambda_a=\ol{w}^nw^n$ commute, these two matrices are simultaneously diagonalizable. 
Hence $\rho(a^{-3}\lambda_a)=I_2$ if and only if $\tr \rho(a^{-3}\lambda_a)=2$, namely, $\mf{d}_n(x,y)=0$ at $(x,y)=(\tr\rho(a), \tr\rho(ab))$. 
Since each root of $f_n(x,x)=0$ corresponds to an irreducible $\SL_2(\C)$-representation on $x=y$ of $\pi_n$ by Proposition \ref{prop.f}, Proposition\ref{prop.b} yields the assertion. 
\end{proof} 

We remark that this assertion was initially proved by the first author by using Riley's explicit representation and the formula $A^n=\mca{S}_n(\tr A)A-\mca{S}_{n-1}(\tr A)I_2$ 
for any $A\in \SL_2(\C)$ and $n\in \Z$ \cite[Porposition 2.4]{Tran2016TJM}.

\subsection{A characterization} 
We further address the characterization of non-acyclic representations and prove Theorem \ref{thm.order3}, that is, 
\emph{irreducible $\SL_2(\C)$-representations of $J(2,2n)$ on $x=y$ are non-acyclic if and only if 
$\rho(a^{-1}w^n)$ is of order 3.}  
We suppose $x=y$ throughout this subsection. 

\begin{proof}[Proof of Theorem \ref{thm.order3}] 
Define a Chebyshev-like sequence by $c_n=\tr a^{-1}w^n \in \Z[x]$. 
Then we have $c_0=c_1=x$ and $c_n=x(\mca{S}_n(z)-\mca{S}_{n-1}(z))$. 
As in the proof of Proposition \ref{prop.roots}, 
by Lemmas \ref{lem.neq} and \ref{lem.1-x}, we have $z\neq \pm2$, $1-x=t+t^{-1}$ for some $t \in \C^*$, and $\mca{S}_n(z)=(t^{3n}-t^{-3n})/(t^3-t^{-3})$, so that we have 
$c_n=(1-t-t^{-1})((t^{3n}-t^{-3n})-(t^{3n-3}-t^{-3n+3}))/(t^3-t^{-3})
=-(t^{3n-1}+t^{-(3n-1)+1})(t+1)$. 
Recall that roots of $f_n(x,x)$ corresponds to $t$'s with $t^{3n-1}=\pm1$, while non-acyclic representations correspond to those with $t^{3n-1}=1$. 
If $t^{3n-1}=\pm1$, then $c_n=\mp1$. 

Suppose that $x$ is a root of $f_n(x,x)$. 
Then we have $\det(\rho(a^{-1}w^n)-I_2)\neq0$. Indeed, if $\det(\rho(a^{-1}w^n)-I_2)=0$, then by a general property of $\SL_2$, we have $\tr(\rho(a^{-1}w^n)-I_2)=0$, hence $c_n=2$, contradicting $c_n=\pm 1$. 
Note in addition that each $A
\in \SL_2$ satisfies $A^2-(\tr A)A+I_2=O$. 
Now we obtain the following equivalence: 

 $\rho(a^{-1}w^n)$ is of order 3 $\iffu$ $\rho(a^{-1}w^n)^3=I_2$ and $\rho(a^{-1}w^n)\neq I_2$
 
$\iffu$ $(\rho(a^{-1}w^n)-I_2)(\rho(a^{-1}w^n)^2+\rho(a^{-1}w^n)+I_2)=O$ and $\rho(a^{-1}w^n)\neq I_2$ 

$\iffu$ $\rho(a^{-1}w^n)^2+\rho(a^{-1}w^n)+I_2=O$ 
$\iffu$ $\tr \rho(a^{-1}w^n) +1=0$ $\iffu$  $c_n=-1$. 

Therefore, irreducible representations on $x=y$ with ${\rm ord}\rho(a^{-1}w^n)=3$ are exactly those which are non-acyclic.
\end{proof} 

\begin{rem} We indeed have $c_n+1=(x+1)k_nk_{-n+1}$. 
\end{rem} 

\begin{rem} \label{rem.Brieskorn} 
Sometimes the image of a non-acyclic representation of a twist knot is isomorphic to a von Dyck group, that is, the fundamental group of a Brieskorn manifold (cf.~\cite{Milnor1975Brieskorn, KitanoYamaguchi2016arXiv}). 
Since the image of representations are of interest also in the study of Galois deformation theory, 
Theorem D would give a new clue in arithmetic topology. 
\end{rem} 

\section{$L$-functions of universal deformations} \label{sec.L} 
In this section, we will give a complete answer to the Problem in the Introduction for twist knots. 

\subsection{Twisted invariants} \label{subsec.twisted} 
In this subsection, we overview the theory of Reidemeister torsions and twisted Alexander invariants, that will be used in our argument. A basic reference is \cite[Chapters 3, 6]{Hillman2}. 

\subsubsection{Reidemeister torsions} 
Let $R$ be any integral domain with the fraction field ${\rm fr}R$ and let $\dot{=}$ denote the equality up to multiplication by units in $R$. 
For a finitely generated free $R$-module $M$ with two bases $\mf{b}=(b_i)_i$ and $\mf{c}=(c_i)_i$, 
let $[\mf{b}/\mf{c}]=\det (p_{ij})_{ij}$ denote the determinant of the change of bases $b_i=\sum_j p_{ij}c_j$. 
Let $C_*=(0\to C_n \overset{\der_n}{\to}\cdots \overset{\der_2}{\to} C_1 \overset{\der_1}{\to} C_0\to 0)$ be a finitely generated free $R$-complex with a fixed basis $\mf{c}_*$ and suppose that $C_*$ is acyclic, that is, $H_*(C_*\otimes_R {\rm fr}R)=0$ holds. 
Let $\mf{b}_i$ be a basis of $\der_{i+1} C_{i+1}\subset C_i$ for each $i$ and let $\wt{\mf{b}}_{i-1}$ be a lift of $\mf{b}_{i-1}$ in $C_i$. Then the Reidemeister torsion of $(C_*,\mf{c}_*)$ is defined by 
$$\tau(C_*,\mf{c}_*)=\prod_i [\mf{b}_i\sqcup \wt{\mf{b}}_{i-1}/\mf{c}_i]^{(-1)^{i+1}}\in {\rm fr}R^*,$$
which is independent of the choice of $\mf{b}_*$. 
If instead $C_*$ is non-acyclic, then we put $\tau(C_*,\mf{c}_*)=0$. 
This torsion $\tau(C_*,\mf{c}_*)$ is multiplicative with respect to extensions \cite{Whitehead1950AJM, Milnor1966BAMS}: 
\begin{lem} \label{lem.tau-ext} 
Let $0\to A_* \to C_* \to B_* \to 0$ be an exact sequence of finitely generated acyclic free $R$-complexes with 
compatible bases $\mf{a}_*$, $\mf{c}_*$, $\mf{b}_*$, that is, we have $\mf{c}_*=\mf{a}_*\sqcup \wt{\mf{b}}_*$ for a lift $\wt{\mf{b}}_*$ of $\mf{b}_*$. 
Then we have $\tau(C_*,\mf{c}_*)=\tau(A_*,\mf{a}_*)\tau(B_*,\mf{b}_*)$. 
\end{lem}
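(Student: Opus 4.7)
My plan is to compute $\tau(C_*, \mf{c}_*)$ by building, in each degree $i$, a basis of $C_i$ compatible with the short exact sequence $0\to A_i\to C_i\to B_i\to 0$, so that the change-of-basis matrix from this basis to $\mf{c}_i=\mf{a}_i\sqcup \wt{\mf{b}}_i$ becomes block triangular with diagonal blocks coming from the torsions of $A_*$ and $B_*$. The essential structural input is that, under the joint acyclicity hypothesis, taking boundaries termwise preserves short exactness: the sequence $0\to \der A_{i+1}\to \der C_{i+1}\to \der B_{i+1}\to 0$ is exact for every $i$. Injectivity on the left and surjectivity on the right are formal; middle exactness uses the acyclicity of $B_*$, as follows. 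If $c\in C_{i+1}$ satisfies $\der c\in A_i\subset C_i$, then its image $\bar c\in B_{i+1}$ is a cycle, hence a boundary $\bar c=\der\bar b$, and lifting $\bar b$ to $b'\in C_{i+2}$ gives $c-\der b'\in A_{i+1}$ with $\der(c-\der b')=\der c$, so $\der c\in \der A_{i+1}$.

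With this in hand, for each $i$ I would choose bases $\mf{b}_i^A$ of $\der A_{i+1}\subset A_i$ and $\mf{b}_i^B$ of $\der B_{i+1}\subset B_i$, together with lifts $\wt{\mf{b}}_{i-1}^A\subset A_i$ of $\mf{b}_{i-1}^A$ and $\wt{\mf{b}}_{i-1}^B\subset B_i$ of $\mf{b}_{i-1}^B$, producing the standard data that computes $\tau(A_*, \mf{a}_*)$ and $\tau(B_*, \mf{b}_*)$. Using the boundary exactness just established, I would then lift each element of $\mf{b}_i^B$ to an element of $\der C_{i+1}\subset C_i$ to form a set $\tilde{\mf{b}}_i^B$, and each element of $\wt{\mf{b}}_{i-1}^B\subset B_i$ to an element $\check{\mf{b}}_{i-1}^B\subset C_i$. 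The union $\mf{b}_i^A\sqcup\tilde{\mf{b}}_i^B\sqcup \wt{\mf{b}}_{i-1}^A\sqcup \check{\mf{b}}_{i-1}^B$ is then a basis of $C_i$ adapted to the definition of $\tau(C_*,\mf{c}_*)$, since its first two blocks span $\der C_{i+1}$ and the remaining two blocks form a lift in $C_i$ of the boundary basis in degree $i-1$.

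Expressed in $\mf{c}_i=\mf{a}_i\sqcup \wt{\mf{b}}_i$, the $A$-part of this new basis sits inside $A_i$ and so has zero $\wt{\mf{b}}_i$-component, while the $C$-lifts of the $B$-part project onto $\mf{b}_i^B\sqcup \wt{\mf{b}}_{i-1}^B$ under $C_i\surj B_i$. The change-of-basis matrix is therefore block triangular, and since the $\wt{\mf{b}}_i$-coefficients of a $B$-lift coincide with the $\mf{b}_i$-coefficients of its image in $B_i$, its determinant factors as
\[[\mf{b}_i^A\sqcup \tilde{\mf{b}}_i^B\sqcup \wt{\mf{b}}_{i-1}^A\sqcup \check{\mf{b}}_{i-1}^B/\mf{c}_i]=[\mf{b}_i^A\sqcup \wt{\mf{b}}_{i-1}^A/\mf{a}_i]\cdot[\mf{b}_i^B\sqcup \wt{\mf{b}}_{i-1}^B/\mf{b}_i].\]
Taking the alternating product $\prod_i(\cdot)^{(-1)^{i+1}}$ yields $\tau(C_*,\mf{c}_*)=\tau(A_*,\mf{a}_*)\tau(B_*,\mf{b}_*)$, which is the asserted multiplicativity.

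The main obstacle I anticipate is twofold. First, middle exactness of the boundary sequence genuinely requires acyclicity: without it, $\der C_{i+1}\cap A_i$ can be strictly larger than $\der A_{i+1}$, and the block structure breaks down; one must therefore be scrupulous about invoking the hypothesis at the correct step. Second, one must carefully track the signs coming from reordering basis elements between the ``$A$-first, $B$-second'' arrangement used above and the natural arrangement implicit in the definition of $\tau$; these signs are units in $R$ and ultimately cancel in the alternating product, but the bookkeeping --- together with the verification that the well-definedness of the torsion justifies our freedom to use the adapted basis --- is the only delicate part of the argument.
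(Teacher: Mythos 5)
The paper does not actually prove this lemma: it is stated as a classical fact with citations to Whitehead and Milnor. Your argument is, in substance, the standard proof from those references (Milnor's multiplicativity of torsion, specialized to the case where the connecting homology sequence is trivial because all three complexes are acyclic), and its overall structure --- exactness of the termwise boundary sequence, an adapted basis of $C_i$ whose change-of-basis matrix to $\mf{c}_i=\mf{a}_i\sqcup\wt{\mf{b}}_i$ is block triangular, and the alternating product --- is correct.

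Three caveats, in decreasing order of importance. First, note that in this paper ``acyclic'' means $H_*(C_*\otimes_R{\rm fr}R)=0$, and the torsion is an element of ${\rm fr}R^*$; accordingly, your entire construction (in particular the step ``$\bar c$ is a cycle, hence a boundary $\bar c=\der\bar b$'' and the choice of bases of the boundary modules $\der C_{i+1}$, which need not be free over $R$) must be carried out after tensoring with the fraction field. Over $R$ itself the middle-exactness claim $\der C_{i+1}\cap A_i=\der A_{i+1}$ can fail even for rationally acyclic complexes; over ${\rm fr}R$ your argument is fine, and the exactness of localization transports the short exact sequence to the field. Second, you choose the lift $\tilde{\mf{b}}_{i-1}^B\subset\der C_i$ of $\mf{b}_{i-1}^B$ and the lift $\check{\mf{b}}_{i-1}^B\subset C_i$ of $\wt{\mf{b}}_{i-1}^B$ independently, but the definition of $\tau(C_*,\mf{c}_*)$ requires $\der\check{\mf{b}}_{i-1}^B$ to equal the boundary basis chosen in degree $i-1$; the clean fix is to \emph{define} $\tilde{\mf{b}}_{i-1}^B:=\der\check{\mf{b}}_{i-1}^B$ (or to invoke the independence of $\tau$ from the choice of lift, which you allude to). Third, the reordering sign does not in general cancel in the alternating product --- one gets $\tau(C_*)=(-1)^{\mu}\tau(A_*)\tau(B_*)$ with $\mu=\sum_i(-1)^{i+1}\beta_i\alpha_{i-1}$ in your notation, which can be odd --- but the paper states the lemma without this sign and only ever uses torsions up to $\pm\Im\det\rho$, so this imprecision is shared with the source and harmless here.
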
 

Suppose in addition that $R$ is a Noetherian UFD. 
The divisorial hull $\wt{\mf{a}}$ of an ideal $\mf{a}$ of $R$ is defined to be the intersection of all the principal ideals containing $\mf{a}$. 
For a finitely generated torsion $R$-module $M$, the order is defined to be a generator of the divisorial hull $\wt{{\rm Fitt}}_R(M)$ of the Fitting ideal. 
We have the following equality \cite{Turaev1986UMN}: 
\begin{lem} \label{lem.tau-ord} If $C_*$ is acyclic, then 
we have $\tau(C_*) $ $\dot{=}$ $ \prod_i {\rm ord}(H_i(C_*))^{(-1)^{i+1}}$. 
\end{lem}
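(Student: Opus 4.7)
The plan is to reduce the identity to a local computation at each height-one prime of $R$, decompose $C_*$ over the resulting discrete valuation ring into elementary pieces, and then reassemble globally.

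First, I would verify that the claimed equality $\dot{=}$ in ${\rm fr}R^*$ is detected prime-by-prime. Since $R$ is a Noetherian UFD, two nonzero elements of ${\rm fr}R$ agree modulo units of $R$ if and only if they have the same $\mf{p}$-adic valuation at every height-one prime $\mf{p}$, and the formation of both $\tau(C_*)$ and ${\rm ord}(H_i(C_*))$ commutes with the flat base change $R \to R_\mf{p}$. It therefore suffices to prove the identity after localization at each such $\mf{p}$, where $R_\mf{p}$ is a DVR.

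Second, over the DVR $R_\mf{p}$ I would use a chain-level Smith-normal-form argument to decompose $C_* \otimes_R R_\mf{p}$ as a direct sum of elementary free subcomplexes of two types: contractible two-term complexes $R_\mf{p} \xrightarrow{\cong} R_\mf{p}$ concentrated in two consecutive degrees, and complexes $R_\mf{p} \xrightarrow{\pi^k} R_\mf{p}$ carrying torsion homology $R_\mf{p}/\pi^k$ in a single degree. Since $\tau$ is multiplicative over direct sums of acyclic complexes (via Lemma~\ref{lem.tau-ext} applied to the trivially split extension) and ${\rm ord}$ is multiplicative over direct sums of torsion modules, it suffices to verify the identity on each elementary summand: the contractible type contributes $1$ on both sides, while a torsion summand sitting in degree $i$ contributes $\pi^k$ (up to units) to both sides, with matching sign $(-1)^{i+1}$ in the exponent on the right.

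The main obstacle will be establishing the elementary decomposition across all differentials simultaneously. The classical Smith normal form yields a diagonalizing basis for a single differential $\der_i$, but the base change needed on $C_i$ and $C_{i-1}$ to achieve this disturbs $\der_{i+1}$ and $\der_{i-1}$. I would resolve this by a downward induction starting at the top nonzero degree: put $\der_n$ in Smith normal form, split off the elementary subcomplexes it produces, restrict the remainder of $C_n$ and $C_{n-1}$ to their respective complements, and continue with $\der_{n-1}$. Careful rank bookkeeping, together with the hypothesis that $C_* \otimes_R {\rm fr}R$ is acyclic (so that ranks balance at each step), shows that this process terminates in a genuine direct-sum decomposition into elementary subcomplexes of the two types above, at which point the local identity follows from the elementary computation and globalizes by the first step.
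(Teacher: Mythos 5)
Your proposal is correct in outline, but note that the paper does not actually prove this lemma: it is quoted as a known result with a citation to Turaev, so there is no in-paper argument to compare against. Your two-step reduction is the standard route to this statement. The first step is sound: in a Noetherian UFD an element of ${\rm fr}R^*$ is a unit of $R$ iff its valuation vanishes at every height-one prime, $v_{\mf p}({\rm ord}\,H_i(C_*))$ equals the $R_{\mf p}$-length of $H_i(C_*)_{\mf p}$ because Fitting ideals commute with base change and the divisorial hull is computed valuation-by-valuation, and $\tau$ is unchanged under localization. (You should say explicitly that acyclicity over ${\rm fr}R$ forces each $H_i(C_*)$ to be torsion, so that the orders are nonzero and the right-hand side makes sense.) The second step also works, and your worry about the differentials interfering is resolved more cleanly than by "rank bookkeeping": after putting the top differential $\der_n$ in Smith normal form, the relation $\der_{n-1}\der_n=0$ together with torsion-freeness of $C_{n-2}$ forces the diagonalized image coordinates of $C_{n-1}$ to lie in $\Ker\der_{n-1}$, so the elementary two-term pieces split off as genuine subcomplexes and the quotient is again a based acyclic free complex of shorter length; downward induction then terminates. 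Two small points you should make explicit: the Smith-normal-form rebasing multiplies $\tau$ by determinants that are units of $R_{\mf p}$, which is harmless precisely because you only compare $\mf p$-adic valuations; and the elementary computation should be checked against the paper's sign convention, where a summand $R_{\mf p}\xrightarrow{\pi^k}R_{\mf p}$ with homology in degree $i$ contributes $(\pi^k)^{(-1)^{i+1}}$ to both sides, as you assert. With these details filled in, your argument is a complete and self-contained proof of the cited result.
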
 

\subsubsection{Twisted invariants} 
Let $K$ be a knot in $S^3$ with the exterior $X=S^3-K$ and the group $\pi=\pi_1(X)$. 
Let $\pi=\langle a_1,\cdots, a_g\mid r_1,\cdots, r_{g-1} \rangle$ be a Wirtinger presentation and 
let $W$ denote the associated 
2-dimensional CW-complex with a natural basis $\mf{w}_*$.  
Note that $X$ is an Eilenberg-MacLane space $K(\pi,1)$, namely, $\pi_1(X)\cong \pi$ and $\pi_i(X)=1$ for $i>1$ holds,   and indeed $X$ is simply homotopic to $W$ by Waldhausen \cite{WaldhausenAM12}. 
The universal cover $\wt{W}\to W$ admits a natural left $\pi$-action and may be identified with the following finitely generated free $\Z[\pi]$-complex 
\[C_*=(0\to \Z[\pi]^{g-1} \overset{\der_2}{\to} \Z[\pi]^g \overset{\der_1}{\to} \Z[\pi]
\to 0)\]
called \emph{the Lyndon exact sequence} (cf.~\cite[Sections 4,5]{Lyndon1950Coh}), 
where each element is viewed as a column vector, 
the presentation matrix of $\der_2$ is the Jacobian matrix $P=(\dfrac{\der r_j}{\der a_i})_{i,j} \in \M_{g,g-1}(\Z[\pi])$ 
given by the Fox free derivatives, and that of $\der_1$ is $(a_j-1)_j\in \M_{1,g}(\Z[\pi])$. 
Let $\rho:\pi\to \GL_N(R)$ be a representation with $N\in \Z_{>0}$ and 
let $V_{\rho}$ denote the right representation space, namely, 
the set $V_{\rho}=R^N$ of column vectors with a right $\pi$-action via the transpose of $\rho$. 
Then the complex with local coefficients is defined by $C_{\rho,*}:=V_{\rho}\otimes_{\Z[\pi]}C_*$. 

The Reidemeister torsion of CW-complex is known to be an invariant of simple homotopy by Whitehead \cite{Whitehead1950AJM}, so that we only need to care about the choice of a basis. 
We choose a basis $\wt{\mf{w}}_*$ of $C_*$ which is a lift of $\mf{w}_*$ and a basis $\wt{\mf{w}}_{\rho,*}$ of $C_{\rho,*}$ consisting of the tensor products of elements of $\wt{\mf{w}}_*$ and the standard basis $\{\bm{e}_1,\cdots,\bm{e}_N\}$ of $V_{\rho}$. 
If we change the order of the basis, then $\tau_\rho(C_{\rho,*},\wt{\mf{w}}_{\rho,*})$ is multiplied by $\pm1$. 
If we change the choice of a lift $\wt{\mf{w}}$, then $\tau_\rho(C_{\rho,*},\wt{\mf{w}}_{\rho,*})$ is multiplied by $\det\rho(g)$ for some $g\in \pi$. 
Thus the Reidemeister torsion $\tau_\rho(X)=\tau(C_{\rho,*})$ of $(X,\rho)$ is defined up to multiplication by elements of $\pm \Im \det \rho$. 

Note that $V_{\rho}\otimes_{\Z[\pi]}\Z[\pi]\cong R^N$. 
Let $\rho$ also denote the linearly extended map 
$\rho:\M_{k,l}(\Z[\pi])\to \M_{k,l}(\M_N(R))\cong \M_{kN,lN}(R)$. 
Then the presentation matrix of $\der_2$ and $\der_1$ of $C_{\rho,*}$ are given by $\rho((\der r_j/\der a_i)_{i,j})\in \M_{g,g-1}(\M_N(R))$ and $\rho((x_j-1)_j)\in \M_{1,g}(\M_N(R))$ respectively. 
For each $1\leq k\leq g$, let $\vec{p}_k$ denote the $k$-th row of $P$, and $P_k$ the square matrix obtained from $P$ by deleting $\vec{p}_k$. 
Put $A_*=(0\to 0\to \Z[\pi]\overset{x_k-1}{\to} \Z[\pi]\to 0)$ and $C'_*=(0\to \Z[\pi]^{g-1}\overset{P_k}{\to} \Z[\pi]^{g-1}\to 0\to 0)$. 
By taking $V_\rho \otimes_{\Z[\pi]}$, we obtain an exact sequence 
\[0\to V_\rho \otimes_{\Z[\pi]} A_* \to C_{\rho,*} \to V_\rho \otimes_{\Z[\pi]} C'_* \to 0.\]
If $\det\rho(x_k-1)\neq 0$, then $V_\rho \otimes_{\Z[\pi]} A_*$ is acyclic, and so is $V_\rho \otimes_{\Z[\pi]} C'_*$. 
We may verify that $\tau (V_\rho \otimes_{\Z[\pi]} A_*)=\det \rho(x_k-1)^{-1}$ and $\tau(V_\rho \otimes_{\Z[\pi]} C'_*)=\det \rho(P_k)$, so that Lemma \ref{lem.tau-ext} yields the following explicit formula (cf. \cite{Johnson-note}). 
\begin{lem} \label{lem.tau-det} For each $1\leq k\leq g$, the equalities 
$\tau(C_{\rho,*},\wt{\mf{w}}_{\rho,*})=\dfrac{\det \rho(P_k)}{\det \rho(x_k-1)}$ in ${\rm fr}R^*$ and 
$\tau_\rho(X)=\dfrac{\det \rho(P_k)}{\det \rho(x_k-1)}$ in ${\rm fr}R^*/\pm \Im \det \rho$ hold. 
\end{lem}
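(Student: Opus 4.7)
The plan is to directly compute both sides using the multiplicativity of Reidemeister torsion (Lemma \ref{lem.tau-ext}) applied to a carefully chosen short exact sequence of the Lyndon complex.

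First, I would set up the subcomplex and quotient. Let $A_*$ be the subcomplex of the Lyndon complex $C_*$ defined by $A_0=\Z[\pi]=C_0$, $A_1=\Z[\pi]\cdot e_k$ (the $k$-th summand of $C_1=\Z[\pi]^g$, corresponding to the generator $x_k$), $A_2=0$, where the restriction of $\partial_1$ sends $e_k\mapsto x_k-1$. The quotient $C'_*=C_*/A_*$ is then concentrated in degrees $1$ and $2$, with $C'_1=\Z[\pi]^{g-1}$ (the remaining summands), $C'_2=\Z[\pi]^{g-1}=C_2$, and the induced differential $\partial_2$ represented by $P_k$ (since quotienting removes the $k$-th row of $P$). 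This gives a short exact sequence of finitely generated free $\Z[\pi]$-complexes
\[0\to A_* \to C_* \to C'_* \to 0\]
whose natural bases satisfy $\mf{c}_*=\mf{a}_*\sqcup \wt{\mf{b}}_*$ up to a reordering of basis elements (which contributes only a sign).

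Next, I would tensor with $V_\rho$ and verify acyclicity. Assuming $\det \rho(x_k-1)\neq 0$, the two-term complex $V_\rho\otimes A_*$ becomes $V_\rho \xrightarrow{\rho(x_k-1)} V_\rho$, which is acyclic. Since $C_{\rho,*}$ is acyclic (from the definition of $\tau_\rho$), the long exact homology sequence forces $V_\rho\otimes C'_*$ to also be acyclic, so $\rho(P_k)$ is nonsingular over $\mathrm{fr}R$.

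Then I would compute the two torsions directly from the definition. For $V_\rho\otimes A_*$ concentrated in degrees $0,1$: taking $\mf{b}_0=\rho(x_k-1)\cdot(\text{basis of }A_1\otimes V_\rho)$ gives $[\mf{b}_0/\mf{c}_0]=\det\rho(x_k-1)$, and the formula $\tau=\prod_i[\mf{b}_i\sqcup \wt{\mf{b}}_{i-1}/\mf{c}_i]^{(-1)^{i+1}}$ produces the exponent $(-1)^{0+1}=-1$, yielding $\tau(V_\rho\otimes A_*)=\det\rho(x_k-1)^{-1}$. For $V_\rho\otimes C'_*$ concentrated in degrees $1,2$: the analogous computation in degree $1$ produces $\det\rho(P_k)$ with exponent $(-1)^{1+1}=+1$, giving $\tau(V_\rho\otimes C'_*)=\det\rho(P_k)$.

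Finally I would assemble everything. Lemma \ref{lem.tau-ext} yields
\[\tau(C_{\rho,*},\wt{\mf{w}}_{\rho,*})\,\dot{=}\,\tau(V_\rho\otimes A_*)\cdot \tau(V_\rho\otimes C'_*)=\frac{\det\rho(P_k)}{\det\rho(x_k-1)},\]
which is the first equality (sign from the basis reordering is absorbed into $\dot{=}$ at the level of $\mathrm{fr}R^*$ modulo $\pm 1$, and strictly it is a genuine equality after fixing the conventional ordering used in the paper). The second equality follows immediately: $\tau_\rho(X)$ is by definition $\tau(C_{\rho,*},\wt{\mf{w}}_{\rho,*})$ modulo $\pm \Im\det\rho$, so the formula descends to the quotient. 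The main place requiring care is tracking the degree shift between $A_*$ and $C'_*$ so that the exponents $\pm 1$ come out correctly, and verifying the compatibility of bases across the short exact sequence; but both are routine once the splitting of $C_1$ into the $k$-th summand and its complement is made explicit.
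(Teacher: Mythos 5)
Your proposal is correct and follows essentially the same route as the paper: the paper also splits the Lyndon complex into the two-term subcomplex $A_*$ given by $x_k-1$ (in degrees $0,1$) and the quotient $C'_*$ given by $P_k$ (in degrees $1,2$), tensors with $V_\rho$, deduces acyclicity of $V_\rho\otimes C'_*$ from that of $V_\rho\otimes A_*$ and $C_{\rho,*}$, computes $\tau(V_\rho\otimes A_*)=\det\rho(x_k-1)^{-1}$ and $\tau(V_\rho\otimes C'_*)=\det\rho(P_k)$, and concludes by the multiplicativity Lemma \ref{lem.tau-ext}. Your sign bookkeeping for the exponents $(-1)^{i+1}$ and your remark on the compatibility of bases match the paper's (unstated but implicit) verification.
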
 

\begin{rem}
In our study of $\SL_2$-representations of twist knots, we start with the basis $\mf{w}$ given by a fixed Wirtinger presentation. We then choose a lift $\wt{\mf{w}}_*=((\wt{w}_{*,i})_i)_*$, 
which makes no ambiguity since $\det \SL_2=1$, and put 
$\wt{\mf{w}}_{\rho,*}:=(\wt{w}_{*,1}\otimes \bm{e}_1,$ $\wt{w}_{*,1}\otimes \bm{e}_2,$ $\wt{w}_{*,2}\otimes \bm{e}_1,$ $\wt{w}_{*,2}\otimes \bm{e}_2,$ $\cdots)_*$.  
Thus we may write $\tau_\rho=\tau(C_{\rho,*},\wt{\mf{w}}_{\rho,*})$ and we have $\tau_\rho=\dfrac{\det \rho(P_k)}{\det \rho(x_k-1)}$ in ${\rm fr}R^*$. 
Our convention essentially coincides with that in \cite{Tran2016TJM}. 
The relationship among several conventions is clarified in \cite[Section 2.8]{DunfieldFriedlJackson2012}. 

The Tieze argument also shows without using the simple homotopy theory that the ambiguity of $\tau_\rho$ due to the choice of basis is given by $\pm \Im \det \rho$ (cf. \cite{Wada1994}). 
\end{rem} 

The twisted homology is defined as that with local coefficient, so that we have an isomorphism $H_*(X,\rho)\cong H_*(C_{\rho,*})$. 
Since $X$ is an Eilenberg-MacLane space, the \emph{augmented} complex of $\wt{W}$ may be regarded as the following standard free resolution 
\[F_*=(0 \to \Z[\pi]^{g-1} \overset{\der_2}{\to} \Z[\pi]^g \overset{\der_1}{\to} \Z[\pi]\overset{\rm aug}{\to} \Z \to 0)\]
of $\Z$ over $\Z[\pi]$ given by the Fox free derivative (cf.~\cite[Proposition I.4.2, Exercise II.5.2(b)]{Brown})
and  
that of $C_{\rho,*}$ coincides with $V_\rho \otimes_{\Z[\pi]} F_*$, 
yielding an isomorphism $H_*(X,\rho)\cong H_*(\pi,\rho)$. 

\subsubsection{Alexander invariants}
For a representation $\rho:\pi\to \GL_N(R)$ and an abelianization map $\alpha:\pi\surj t^\Z \inj \GL_1(\Z[t^\Z])=\pm t^\Z$, 
the tensor representation $\rho\otimes \alpha:\pi\to \GL(R^n\otimes_\Z \Z[t^\Z])=\GL_n(R[t^\Z])$ is defined, 
extending to $\rho\otimes \alpha: \Z[\pi]\to M_n(R[t^\Z])$. 
By Lemma \ref{lem.tau-det}, the twisted invariant introduced by Lin and Wada in \cite{Lin2001AMS, Wada1994} 
coincides with the Reidemeister torsion $\tau_{\rho}(t)=\tau_{\rho\otimes \alpha}(X)$ and is well-defined up to multiplication by elements of $\pm \Im \det \rho\otimes \alpha=\pm t^\Z\Im \det \rho$ (cf. \cite{Kitano1996PJM}). 

Let $X_\infty\to X$ denote the $\Z$-cover corresponding to $\Ker \alpha$. 
Then Shapiro's lemma yields the natural isomorphisms $$H_i(X_\infty, V_\rho)\cong H_i(X,V_{\rho\otimes \alpha})= H_i(\pi,\rho\otimes \alpha)\cong H_i(\Ker \alpha, \rho)$$  
of $R[t^\Z]$-modules for each $i$. The orders $\Delta_{\rho,i}(t)\in R[t^\Z]/R[t^\Z]^*$ of these modules are called \emph{the $i$-th twisted Alexander polynomials}. 
By Lemma \ref{lem.tau-ord}, we have 
$$\tau_{\rho}(t) = \Delta_{\rho,1}(t)/\Delta_{\rho,0}(t) \ \text{in}\ {\rm fr}R[t^\Z]/R[t^\Z]^*$$
(cf. \cite[Theorem 4.1]{KirkLivingston1999T1}, \cite[Theorem 11]{HillmanLivingstonNaik2006}, \cite[Proposition 3.6]{SilverWilliams2009TA}). 

Since $\rho\otimes \alpha(g)|_{t=1}=\rho (g)$ for any $g \in \pi$, 
Lemma \ref{lem.tau-det} for $\tau_\rho(X)$ and $\tau_{\rho\otimes \alpha}(X)=\tau_\rho(t)$ yield that $\tau_\rho(1)=\tau_\rho(X)$. 
We have $\tau_{\rho}(X)$ $\dot{=}$ $\Delta_{\rho,1}(1)/\Delta_{\rho,0}(1)$ in ${\rm fr}R$, 
where we see that $\Delta_{\rho,i}(1)$ is the order of $H_i(X,\rho)$ for each $i$.

\subsection{Residual representations} \label{subsec.resrep} 

In this subsection, we study ${\rm SL}_2$-representations of twist knots over a finite field $F$ with characteristic $p>2$
and prove a version of Theorem \ref{thm.na}. 
In what follows, a \emph{residual representation} means an $\SL_2(F)$-representation, since we will consider its deformations later.  
Recall $\pi_n=\left<a,b \,|\, aw^n=w^nb\right>,$ $w=[a,b^{-1}]=ab^{-1}a^{-1}b.$
Since Le's argument in \cite{Le1993} and the proof of Proposition \ref{prop.f} are clearly applicable to the case over a field with positive characteristic, the character variety over $F$ has the same defining polynomial. Namely, we have the following assertion. 

\begin{prop}
Let $\ol{F}$ be an algebraic closure of $F$. 
Then the $\SL_2(\ol{F})$-character variety of $\pi_n$ is given by $(x^2-y-2)f_n(x,y)=0$. 
Each conjugacy class of irreducible representations corresponds to each point on $f_n(x,y)=0$ with $x^2-y-2\neq 0$. 
\end{prop}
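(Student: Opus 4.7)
The plan is to re-run the proof of Proposition \ref{prop.f} word-for-word over $\ol{F}$, verifying that no step appeals to the characteristic-zero structure of $\C$. Three ingredients need to be checked.

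First, I would invoke the classical parametrization of absolutely irreducible $\SL_2(\ol{F})$-character classes by traces: over any algebraically closed field of characteristic $\neq 2$, an absolutely irreducible $\SL_2$-representation of a finitely generated group is determined up to conjugation by its character (by Procesi's invariant theory for matrices), and the character of a two-generator representation is encoded by the triple $(\tr\rho(a),\tr\rho(b),\tr\rho(ab))$. Since $a$ and $b$ are conjugate meridians in the two-bridge group $\pi_n$, this collapses to the pair $(x,y)=(\tr\rho(a),\tr\rho(ab))$.

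Second, I would appeal to Le's Theorem \cite[Theorem 3.3.1]{Le1993} identifying $\wt{f}_n=\tr(aw^nb^{-1})-\tr(w^n)$ as a defining polynomial of the character variety of $\pi_n$; the proof proceeds by induction using only the universal Fricke--Vogt trace relations, which are identities in $\Z[x,y]$ and thus valid over $F$. Third, I would reproduce the recursive argument of Proposition \ref{prop.f} verbatim: the recurrence $\wt{f}_{n+1}-z\wt{f}_n+\wt{f}_{n-1}=0$ together with the base cases $\wt{f}_0=x^2-y-2$ and $\wt{f}_1=(x^2-y-2)(y-1)$ force the factorization $\wt{f}_n=(x^2-y-2)f_n$ in $\Z[x,y]$, and both initial identities rely only on $\tr AB=\tr A\tr B-\tr AB^{-1}$ and $\tr A^{-1}=\tr A$ in $\SL_2$, which are identities in any commutative ring.

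For the identification of $\{x^2-y-2=0\}$ with the reducible locus, one notes that if $\rho(a)$ and $\rho(b)$ can be simultaneously put in upper-triangular form then, having the same trace $x$, they share an eigenvalue $\mu$ with $x=\mu+\mu^{-1}$, so $y=\tr\rho(ab)=\mu^2+\mu^{-2}=x^2-2$; conversely, a point on $\{x^2-y-2=0\}$ arises from such a (possibly degenerate) diagonal character, and together with the factor $f_n(x,y)$ this exhausts the components. The main potential obstacle is the verification in the first step that $\SL_2$-characters parametrize absolutely irreducible conjugacy classes as in characteristic zero; the hypothesis $p>2$ is precisely what keeps the invariant theory of $\SL_2$ well-behaved (in characteristic $2$ the Fricke--Vogt invariants degenerate), and modulo a citation to Procesi's work the remainder of the argument is a direct translation of the complex case.
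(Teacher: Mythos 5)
Your proposal is correct and takes essentially the same route as the paper, which offers no separate argument for this proposition beyond the remark that Le's theorem and the proof of Proposition \ref{prop.f} carry over verbatim to positive characteristic --- precisely the verification you carry out (trace parametrization for $p>2$, the universal trace identities, and the recursion with base cases $\wt{f}_0$, $\wt{f}_1$). The only loose point is in your treatment of the reducible locus: equality of traces alone does not force the shared eigenvalue $\mu$ to occupy the same diagonal slot in both triangularized matrices, but since the diagonal entries define characters of $\pi_n$ and $b$ is conjugate to $a$ they must agree there, so $y=\mu^2+\mu^{-2}=x^2-2$ as you claim.
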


A residual representation is said to be \emph{absolutely irreducible} if it is irreducible over an algebraic closure $\ol{F}$ of $F$. 
The following assertion is fundamental if we work over a finite field, and especially if we are interested in the rationality. 

\begin{prop} \label{prop.bijection} 
There is a bijection between the set ${\rm Rep}(\pi_n,F)_{\rm a.i.}/\!/\SL_2(F)$ of the conjugacy classes of absolutely irreducible representations $\ol{\rho}:\pi_n\to \SL_2(F)$ and the set ${\rm Ch}(\pi_n,F)_{\rm a.i.}$ of regular $F$-rational points $(\ol{\alpha},\ol{\beta})$ of $f_n(x,y)=0$ with $x^2-y-2\neq 0$. 
\end{prop}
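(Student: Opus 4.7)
The plan is to establish the bijection via the natural character map $[\ol{\rho}] \mapsto (\tr\ol{\rho}(a), \tr\ol{\rho}(ab))$ and to verify each direction by a Galois descent argument, using the triviality of the Brauer group of a finite field.

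That the map is well-defined with the claimed image follows from the characteristic-free nature of Le's theorem and of the trace identity $\tr(AB)+\tr(AB^{-1})=\tr(A)\tr(B)$ used in the proof of Proposition~\ref{prop.f}: base-changing to $\ol{F}$, the point $(x,y)$ lies on $\{f_n=0\}\setminus\{x^2-y-2=0\}$. For injectivity, suppose two absolutely irreducible representations $\ol{\rho}_1, \ol{\rho}_2$ share the pair $(x,y)$. Since both generators $a,b$ of $\pi_n$ are meridians with trace $x$, iterating the above trace identity shows that the full characters of $\ol{\rho}_1$ and $\ol{\rho}_2$ coincide on $\pi_n$. By Procesi's theorem they are $\GL_2(\ol{F})$-conjugate; the set of conjugators forms a torsor for the centralizer $\ol{F}^*\cdot I$ (Schur's lemma via absolute irreducibility), whose class in $H^1(\Gal(\ol{F}/F),\ol{F}^*)=0$ vanishes by Hilbert~90, producing a conjugator in $\GL_2(F)$. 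A scalar rescaling then descends this to $\SL_2(F)$-conjugacy.

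For surjectivity, given a regular $F$-rational point $(\ol{\alpha},\ol{\beta})$ on $\{f_n=0\}\setminus\{x^2-y-2=0\}$, invoke Riley's construction over $F(M)$ with $M+M^{-1}=\ol{\alpha}$:
\[\ol{\rho}(a)=\begin{pmatrix}M&1\\0&M^{-1}\end{pmatrix},\qquad \ol{\rho}(b)=\begin{pmatrix}M&0\\-u&M^{-1}\end{pmatrix},\qquad u=\ol{\alpha}^2-\ol{\beta}-2.\]
The defining relation $aw^n=w^nb$ holds precisely because $f_n(\ol{\alpha},\ol{\beta})=0$, and $u\neq 0$ forces absolute irreducibility (the off-diagonal entries obstruct any common eigenvector). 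The character of $\ol{\rho}$ takes values in $F$, and applying the injectivity argument to $\ol{\rho}$ and its $\Gal(F(M)/F)$-conjugate (which shares the same character) yields a Galois-equivariant conjugator descending $\ol{\rho}$ to a representation into $\SL_2(F)$.

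The main obstacle will be this descent step in surjectivity when $\ol{\alpha}^2-4$ is not a square in $F$, so that $F(M)$ is a nontrivial quadratic extension and Riley's form is not \emph{a priori} defined over $F$. The essential input overcoming this is the triviality $\mathrm{Br}(F)=0$ of the Brauer group of a finite field (Wedderburn's theorem), which guarantees that the cohomological obstructions to descending the absolutely irreducible $\ol{\rho}$ along $F(M)/F$ vanish.
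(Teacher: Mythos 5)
Your proposal follows essentially the same route as the paper's proof: both establish the correspondence over $\ol{F}$ (Riley's parametrization by the trace coordinates $(x,y)$) and then descend to $F$ using the vanishing of the relevant Galois cohomology of a finite field --- you invoke $H^1(\Gal(\ol{F}/F),\ol{F}^*)=0$ and $\mathrm{Br}(F)=0$, while the paper invokes Skolem--Noether, $H^1(F,\SL_2(\ol{F}))=1$, and (in its closing remark) the triviality of the Brauer group, delegating the details to \cite{Fukaya1998JA, Harada2011PAMS, Marche-RIMS2016}. Your surjectivity argument via the explicit Riley matrices over $F(M)$ is exactly the intended mechanism.

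One sentence of your injectivity step fails as written: ``a scalar rescaling then descends this to $\SL_2(F)$-conjugacy.'' If $P\in\GL_2(F)$ conjugates $\ol{\rho}_1$ to $\ol{\rho}_2$, then $\det(\lambda P)=\lambda^2\det P$, so $P$ can be rescaled into $\SL_2(F)$ only when $\det P\in (F^*)^2$; since $p>2$, half of $F^*$ is non-square, and because the full set of conjugators is the single coset $F^*P$ (Schur's lemma applied to the absolutely irreducible $\ol{\rho}_1$), no $\SL_2(F)$-conjugator exists at all when $\det P\notin(F^*)^2$. What your argument actually proves is injectivity on $\GL_2(F)$-equivalence classes. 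That is the notion of equivalence the paper uses everywhere else (cf.\ Lemmas \ref{Nyssen} and \ref{Carayol}), and the paper's own appeal to Skolem--Noether produces the same $\GL_2(F)$-conjugator, so this reflects an imprecision in how ``$/\!/\SL_2(F)$'' is to be read rather than a defect peculiar to your route; still, you should delete the rescaling claim and state the descent for $\GL_2(F)$-equivalence rather than assert something unprovable. A further minor point: $u\neq 0$ alone does not rule out a common eigenvector of Riley's matrices (one exists precisely when $u=0$ or $u=x^2-4$, i.e.\ $y=2$); you also need $y\neq 2$, which does hold on $f_n=0$ by the computation in Lemma \ref{lem.neq}.
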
 

\begin{proof} If $F$ is replaced by an algebraic closure $\ol{F}$ of $F$, then the assertion is obtained by Riley's representation \cite[Lemma 7]{Riley1985}, as well as it follows from a general result \cite[Theorem 6.18]{Nakamoto2000PRIMS}. 
The Skolem--Noether theorem yields a natural injection ${\rm Rep}(\pi_n,F)_{\rm a.i.}/\!/\SL_2(F)\inj {\rm Rep}(\pi_n,\ol{F})_{\rm i.}/\!/\SL_2(\ol{F})$ into the set of conjugacy classes of irreducible representations over $\ol{F}$. 
A straight forward argument with use of the Hilbert Satz 90, that is, $H^1(F, \SL_2(\ol{F}))=1$ (cf.~\cite[Chapter X]{SerreGTM67}) yields the surjectivity of the composite 
$${\rm Rep}(\pi_n,F)_{\rm a.i.}/\!/\SL_2(F)\to {\rm Rep}(\pi_n,\ol{F})_{\rm i.}/\!/\SL_2(\ol{F})^{\Gal(\ol{F}/F)}\congto {\rm Ch}(\pi_n,F)_{\rm a.i.}$$ (cf.~\cite[Lemma 2.3.1]{Fukaya1998JA}, \cite[Proof of Theorem 1.1]{Harada2011PAMS}). 
We remark that the assertion indeed holds for a more general setting over any field with trivial Brauer group (cf.~\cite[Proposition 3.4]{Marche-RIMS2016}).  
\end{proof}

The calculation of Reidemeister torsion in \cite{Tran2016TJM} and Kitano's result in \cite{Kitano1996PJM} are also applicable to this case, so that we have the following assertion. 
\begin{prop} 
If $\ol{\rho}:\pi_n\to \SL_2(F)$ satisfies $\tr\ol{\rho}(a)=x, \tr\ol{\rho}(ab)=y$, then the Reidemeister torsion is given by 
$\tau_{\ol{\rho}}(S^3-J(2,2n))=\tau_n(x,y)$, 
where we regard that the value is zero if $\ol{\rho}$ is non-acyclic. 
\end{prop}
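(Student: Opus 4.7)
The plan is to verify that the entire derivation of Proposition~\ref{prop.Tran}, together with Kitano's theorem \cite{Kitano1996PJM}, goes through verbatim over any field of characteristic $p>2$. The key observation is that the formula $\tau_n(x,y)\in\Z[x,y]$ was obtained as a polynomial identity with integer coefficients, so reduction modulo $p$ is legitimate provided the integers that become denominators in intermediate steps remain invertible in $F$.

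First, I would apply Lemma~\ref{lem.tau-det} to the two-generator one-relator Wirtinger-type presentation $\pi_n=\langle a,b\mid aw^n=w^nb\rangle$ used throughout. This expresses $\tau_{\ol{\rho}}$ as $\det\ol{\rho}(P_k)/\det\ol{\rho}(a_k-1)$ for a suitable $k$, provided $\det\ol{\rho}(a_k-1)\neq 0$ in $F$. The calculation of this ratio, carried out in \cite[Section~4]{Tran2016TJM} and organised via the Chebyshev recursion of Proposition~\ref{prop.Tran}, consists of polynomial manipulations among Fox derivatives and trace identities such as $\tr AB+\tr AB^{-1}=\tr A\tr B$ and $A^n=\mca{S}_n(\tr A)A-\mca{S}_{n-1}(\tr A)I_2$ for $A\in\SL_2$. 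Every one of these identities is valid over an arbitrary commutative ring, so after the cancellations that lead to the polynomial $\tau_n(x,y)\in\Z[x,y]$, the resulting equality $\tau_{\ol{\rho}}=\tau_n(x,y)$ holds in $F$.

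Second, to treat the remaining locus $x=2$ where $\det\ol{\rho}(a-1)=0$ and the explicit formula of Lemma~\ref{lem.tau-det} becomes indeterminate for the choice $k=a$, I would invoke Kitano \cite{Kitano1996PJM}, whose argument compares two choices of deleted generator via an integral matrix identity and concludes that the ratio is independent of $k$ up to units. That proof relies only on $2$ being invertible (through the Cayley--Hamilton relation for $\SL_2$), which is our standing hypothesis $p>2$; hence Proposition~\ref{prop.Tran}(1) remains meaningful at $x=2$ and is computed recursively by Proposition~\ref{prop.Tran}(2), whose initial data $\tau_0=0$, $\tau_1=2$ reduce faithfully mod $p$.

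Finally, for the convention that $\tau_{\ol{\rho}}=0$ when $\ol{\rho}$ is non-acyclic, one checks that the determinantal expression of Lemma~\ref{lem.tau-det} vanishes in $F$ exactly when $C_{\ol{\rho},*}$ fails to be acyclic over $F$, which matches the convention for the polynomial $\tau_n(x,y)$. The principal obstacle, essentially bookkeeping, is to confirm that no integer that vanishes in $F$ sneaks into a denominator during the Chebyshev manipulations of Lemma~\ref{lem.der} (where $z^2-4$ appears, but is immediately absorbed in the polynomial form by the recurrence $\mca{S}_{n+1}-z\mca{S}_n+\mca{S}_{n-1}=0$) or during the simplification from rational to polynomial form in Proposition~\ref{prop.Tran}(1); since the target $\tau_n\in\Z[x,y]$ is already integral and the only surviving numerical factor is the $2$ in $\tau_1=2$, the hypothesis $p>2$ is precisely what is needed for the reduction to be faithful.
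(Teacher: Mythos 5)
Your proposal is correct and follows exactly the route the paper takes: the paper's entire ``proof'' is the one-sentence remark that the calculation of \cite{Tran2016TJM} and Kitano's result \cite{Kitano1996PJM} remain applicable over $F$, and your argument is simply a careful justification of that claim (integrality of the trace and Fox-calculus identities, the factor $2$ from $\tau_1=2$ requiring $p>2$, and Kitano's removal of the $x=2$ restriction). Your write-up is in fact more detailed than the paper's.
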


Each common zero of $f_n(x,y)$ and $\tau_n(x,y)$ with $x^2-y-2\neq 0$ corresponds to each absolutely irreducible non-acyclic residual representation. Recall that $k_n(x)$ is the greatest common divisor of $f_n(x,x)$ and $\tau_n(x,x)$ over $\Z$. 

\begin{prop} Let $-1 \neq \ol{\alpha}\in F$ be a root of $k_n$. 
Then $(\ol{\alpha},\ol{\alpha})$ is a regular $F$-rational point of $f_n(x,y)=0$. 
\end{prop}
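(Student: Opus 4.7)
My plan is to show that at least one of the partial derivatives $\der f_n/\der x$ or $\der f_n/\der y$ is nonzero at $(\ol{\alpha},\ol{\alpha})\in F^2$. Before entering the main computation, I would use Corollary \ref{lem.k2}, whose explicit evaluations give $k_n(0),k_n(2)\in\{\pm 1\}\subset\Z$; since $p>2$, these remain units of $F$, hence $\ol{\alpha}\notin\{-1,0,2\}$. The argument then splits according to whether $\ol{\alpha}=3$ or not, since $3$ is the only remaining value at which the simplified denominator $(\ol{\alpha}+1)\ol{\alpha}(\ol{\alpha}-2)(\ol{\alpha}-3)$ appearing in Subsection \ref{subsec.tangent} vanishes and simultaneously $\ol{z}=-\ol{\alpha}^3+3\ol{\alpha}^2-2$ degenerates to $-2$.

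In the generic case $\ol{\alpha}\notin\{-1,0,2,3\}$, the calculation leading to Theorem \ref{thm.tangent} carries over verbatim over $F$. By Proposition \ref{prop.k2}, $\ol{\alpha}$ is also a zero of $\tau_n(x,x)$, so the identities $\mca{S}_n(\ol{z})=1/(\ol{\alpha}^2-2\ol{\alpha})$ and $\mca{S}_{n-1}(\ol{z})=(\ol{\alpha}-1)/(\ol{\alpha}^2-2\ol{\alpha})$ from the proof of Proposition \ref{prop.x=y} remain valid; Lemma \ref{lem.der} is a polynomial identity valid over any ring. The formulas of Subsection \ref{subsec.tangent} then give
\[
\frac{\der f_n}{\der x}(\ol{\alpha},\ol{\alpha})=\frac{2\bigl((2n-1)\ol{\alpha}^2-(4n-2)\ol{\alpha}+1\bigr)}{(\ol{\alpha}+1)\ol{\alpha}(\ol{\alpha}-2)(\ol{\alpha}-3)},\quad \frac{\der f_n}{\der y}(\ol{\alpha},\ol{\alpha})=\frac{2\bigl(n\ol{\alpha}^2-2n\ol{\alpha}-1\bigr)}{(\ol{\alpha}+1)\ol{\alpha}(\ol{\alpha}-2)(\ol{\alpha}-3)}.
\]
A short manipulation shows that simultaneous vanishing of the two numerators gives $n\ol{\alpha}(\ol{\alpha}-2)=1$ and $(2n-1)\ol{\alpha}(\ol{\alpha}-2)=-1$, forcing $3n\equiv 1$ in $F$ and then $\ol{\alpha}(\ol{\alpha}-2)=3$, i.e.\ $(\ol{\alpha}-3)(\ol{\alpha}+1)=0$, contradicting $\ol{\alpha}\notin\{-1,3\}$.

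The main obstacle is the remaining case $\ol{\alpha}=3$, for which $\ol{z}=-2$ and the Chebyshev-derivative identity in Lemma \ref{lem.der} takes the indeterminate form $0/0$. Here Corollary \ref{lem.k2} again intervenes: $k_{2m}(3)=\pm 1$ forces $n$ to be odd, and $k_n(3)=\pm(3m+1)$ with $n=2m+1$ forces $p\mid 3n-1$. To handle this case I would first derive $\mca{S}_n'(-2)=(-1)^n n(n-1)(n+1)/6\in\Z$ by differentiating the Chebyshev recurrence at $z=-2$ and solving the resulting integer-coefficient linear recurrence with initial data $\mca{S}_0'(-2)=\mca{S}_1'(-2)=0$, and then substitute into the chain rule for $\der f_n/\der x$ at $(3,3)$ to obtain
\[
\frac{\der f_n}{\der x}(3,3)\;=\;3(-1)^{n+1}\,n^{2}(n-1)\quad\text{in }\Z.
\]
Each of $p=3$, $p\mid n$, $p\mid n-1$ is then easily excluded by pairing with the constraints $p\mid 3n-1$ and $p>2$, so this integer is a unit in $F$. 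The pleasing point is that the same arithmetic coincidence $p\mid 3n-1$ that makes this degenerate case possible is precisely what prevents the explicit value $3(-1)^{n+1}n^2(n-1)$ from vanishing modulo $p$. Combining the two cases shows that $(\ol{\alpha},\ol{\alpha})$ is a regular $F$-rational point of $f_n(x,y)=0$.
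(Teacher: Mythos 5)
Your proposal is correct, and in the generic case $\ol{\alpha}\notin\{-1,0,2,3\}$ it is essentially the paper's own argument: both proofs plug the identities $\mca{S}_n(z)=1/(\ol{\alpha}^2-2\ol{\alpha})$, $\mca{S}_{n-1}(z)=(\ol{\alpha}-1)\mca{S}_n(z)$ into the partial-derivative formulas of Subsection 1.6 and observe that simultaneous vanishing of the two numerators forces $3n-1=0$ and $(\ol{\alpha}-3)(\ol{\alpha}+1)=0$. Where you genuinely diverge is at the degenerate point $\ol{\alpha}=3$ (equivalently $z=-2$), which is exactly where those rational formulas become $0/0$. The paper handles this by asserting that the two expressions ``indeed belong to $\Z[\ol{\alpha}]$'' and then cancelling the common factor after substituting $n=1/3$, obtaining $\pm 2/(3\ol{\alpha}(\ol{\alpha}-2))\neq 0$; this is a removable-singularity argument whose justification is left implicit. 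You instead compute $\mca{S}_n'(-2)=(-1)^n n(n-1)(n+1)/6$ directly from the differentiated recurrence and obtain the explicit integer $\dfrac{\der f_n}{\der x}(3,3)=3(-1)^{n+1}n^2(n-1)$ (which I checked, e.g.\ it equals $-108$ for $n=-3$, consistent with the table), and then use $p\mid 3n-1$, $p>2$ to rule out $p\mid 3$, $p\mid n$, $p\mid n-1$. Your route is longer but removes the one delicate step in the paper's proof, and the observation that the same congruence $p\mid 3n-1$ which creates the degenerate case also guarantees non-vanishing of $3n^2(n-1)$ is a nice structural point; the paper's route is shorter but leans on the unproved polynomiality claim at the point where the derivation of the formulas (which assumed $z\neq\pm 2$) actually breaks down.
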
 

\begin{proof} The condition for $\ol{\rho}$ to be abelian is $\ol{\alpha}^2-2\ol{\alpha}-2=0$, that is, $\ol{\alpha}=-1,2$. 
By Corollary \ref{lem.k2}, we have $k_n(2)=\pm 1\neq 0$, hence we just assume $\ol{\alpha}\neq -1$. 

As calculated in Subsection \ref{subsec.tangent}, the partial derivatives at $(\ol{\alpha},\ol{\alpha})$ are given by 
\[\dfrac{\der f_n}{\der x}(\ol{\alpha},\ol{\alpha})=\dfrac{2((2n-1)\ol{\alpha}^2+(-4n+2)\ol{\alpha}+1)}{(\ol{\alpha}+1)\ol{\alpha}(\ol{\alpha}-2)(\ol{\alpha}-3)}, \ \ 
\dfrac{\der f_n}{\der y}(\ol{\alpha},\ol{\alpha})=\dfrac{2(n\ol{\alpha}^2-2n\ol{\alpha}-1)}{(\ol{\alpha}+1)\ol{\alpha}(\ol{\alpha}-2)(\ol{\alpha}-3)}\] 
and they indeed belong to $\Z[\ol{\alpha}]$. 

Suppose that $(\ol{\alpha},\ol{\alpha})$ is a singular point, so that we have $\dfrac{\der f_n}{\der x}(\ol{\alpha},\ol{\alpha})=\dfrac{\der f_n}{\der y}(\ol{\alpha},\ol{\alpha})=0$. 
Then we have 
$\left\{ \begin{array}{l} (2n-1)\ol{\alpha}^2+(-4n+2)\ol{\alpha}+1=0\\
n\ol{\alpha}^2-2n\ol{\alpha}-1=0\end{array} \right.$, 
hence $\left\{ \begin{array}{l} 3n-1=0 \\
(\ol{\alpha}-3)(\ol{\alpha}+1)=0\end{array} \right.$. 
However, by substituting $n=\dfrac{1}{3}\in F$, we obtain 
$\dfrac{\der f_n}{\der x}(\ol{\alpha},\ol{\alpha})=\dfrac{\der f_n}{\der y}(\ol{\alpha},\ol{\alpha})
=\dfrac{2}{3\ol{\alpha}(\ol{\alpha}-2)}$, which does not vanish if $\ol{\alpha}=-1,3$. 
Hence contradiction, and we obtain the assertion. 
\end{proof}

The following assertion is a version of Theorem \ref{thm.na}. 
\begin{prop} \label{prop.nap} 
Every absolutely irreducible non-acyclic residual representation corresponds to a root of $k_n$ in $F$. 
\end{prop}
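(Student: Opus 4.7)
The plan is to transfer the characteristic-zero arguments of Propositions~\ref{prop.x=y}, \ref{prop.roots}, and \ref{prop.k2} to the finite-field setting. By Proposition~\ref{prop.bijection}, the absolutely irreducible $\ol{\rho}$ corresponds to a regular $F$-rational point $(\ol{\alpha}, \ol{\beta}) := (\tr\ol{\rho}(a), \tr\ol{\rho}(ab))$ on $f_n(x,y) = 0$ with $\ol{\alpha}^2 - \ol{\beta} - 2 \neq 0$, while non-acyclicity translates, via the Reidemeister torsion formula of the preceding proposition, into $\tau_n(\ol{\alpha}, \ol{\beta}) = 0$ in $F$. The two goals are to show first that $\ol{\alpha} = \ol{\beta}$ and second that $\ol{\alpha}$ is a root of $k_n$.

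For the first claim, I would reproduce the algebraic manipulations in the proof of Proposition~\ref{prop.x=y}: the identities there are polynomial and combine to give $(z-2)(\ol{\alpha} - \ol{\beta})^2 = 0$ over any field, with $z := 2\ol{\alpha}^2 - \ol{\alpha}^2 \ol{\beta} + \ol{\beta}^2 - 2$. The derivation uses the formula for $\tau_n$ valid only when $z \neq 2$, so this case must be excluded first: since $z - 2 = -(\ol{\beta} - 2)(\ol{\alpha}^2 - \ol{\beta} - 2)$ and $\ol{\alpha}^2 - \ol{\beta} - 2 \neq 0$, $z = 2$ forces $\ol{\beta} = 2$; but then $f_n(\ol{\alpha}, 2) = \mca{S}_n(2) - \mca{S}_{n-1}(2) = n - (n-1) = 1 \neq 0$ in $F$, contradicting $f_n(\ol{\alpha}, \ol{\beta}) = 0$. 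Hence $z \neq 2$ and $\ol{\alpha} = \ol{\beta}$.

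For the second claim, $\ol{\alpha}$ is a common root of $f_n(x,x)$ and $\tau_n(x,x)$ in $F$. In the generic case where $\ol{\alpha} \notin \{-1, 0, 2, 3\}$, so that $z \neq \pm 2$, I would transfer the proof of Proposition~\ref{prop.roots}: writing $1 - \ol{\alpha} = t + t^{-1}$ for some $t \in \ol{F}^*$ via Lemma~\ref{lem.1-x} and applying the Chebyshev formulas, the common root condition reduces to $t^{3n-1} = 1$, since the alternative $t^{3n-1} = -1$ forces $2(t^3 - t^{-3}) = 0$, hence $t^6 = 1$ (using $p > 2$), contradicting $z \neq \pm 2$. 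Proposition~\ref{prop.k2} then yields $k_n(\ol{\alpha}) = 0$. The boundary cases are handled by direct evaluation: $\ol{\alpha} = -1, 2$ are ruled out by absolute irreducibility; Lemma~\ref{lem.Cheb} gives $f_n(0,0) = (-1)^n \neq 0$, ruling out $\ol{\alpha} = 0$; and for $\ol{\alpha} = 3$, explicit computation of $f_n(3,3)$ and $\tau_n(3,3)$ combined with Corollary~\ref{lem.k2} shows that $(3,3)$ is a common root precisely when $n$ is odd and $p \mid 3n-1$, which is exactly when $k_n(3) = \pm (3n-1)/2$ vanishes in $F$.

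The main obstacle is the bookkeeping of positive-characteristic degeneracies absent in characteristic zero by Lemma~\ref{lem.neq}, namely the cases $z = \pm 2$ in $F$. These are resolved by absolute irreducibility (for $z = 2$) and by direct polynomial evaluation at $\ol{\alpha} \in \{0, 3\}$ combined with Corollary~\ref{lem.k2}.
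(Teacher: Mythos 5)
Your proposal is correct and follows essentially the same route as the paper: transfer the characteristic-zero arguments of Propositions~\ref{prop.x=y}, \ref{prop.roots}, \ref{prop.k2} to $F$ and dispose of the positive-characteristic degeneracies $z=\pm 2$ by hand, with the $z=-2$ case producing exactly the extra point $(3,3)$ when $n$ is odd and $p\mid 3n-1$, which is indeed a root of $k_n$ by Corollary~\ref{lem.k2}. The only difference is organizational — you establish $\ol{\alpha}=\ol{\beta}$ first (needing only the exclusion of $z=2$) and then check $z=-2$ on the diagonal by evaluating at $\ol{\alpha}\in\{0,3\}$, whereas the paper rules out $z=\pm2$ for general $(\ol{\alpha},\ol{\beta})$ up front via the parametrization $(x,y)=(2/(1-n),1/n)$ — and both computations agree.
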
 

\begin{proof} 
Let $(x,y)$ be a common zero of $f_n(x,y)=0$ and $\tau_n(x,y)=0$ in $F^2$. 
Recall the argument of Lemma \ref{lem.neq}.
By a similar argument, we have $n\neq 0,1$ in $F$. 
If $z=2$, then we have $x^2-y-2=0$ and the point $(x,y)$ corresponds to an abelian representation. 
If $z=-2$ and $n$ is even, then we have $x=0$ and $z+2=\dfrac{1}{n^2}$, which is contradiction. 
If $z=-2$ and $n$ is odd, then we have $(x,y)=(\dfrac{2}{1-n},\dfrac{1}{n})$ and $z+2=\dfrac{(3n-1)^2}{n^2(n-1)^2}$,
which do not contradict if and only if $3n-1=0$, so that we have $x=y=3$ in $F$. 
In this case, by Corollary \ref{lem.k2}, $k_n(3)=\pm\dfrac{3n-1}{2}=0$ holds. 
The rest of the assertion with $z\neq \pm 2$ is obtained in a similar manner to the proof of Lemmas \ref{lem.neq}, \ref{lem.1-x} and Propositions \ref{prop.x=y}, \ref{prop.roots}. 
\end{proof}

Proposition \ref{prop.nap} yields that all common zeros of $f_n(x,y)$ and $\tau_n(x,y)$ on $x=y$ are still given by Proposition \ref{prop.roots}, and are exactly given as long as the values belong to $F$. 
Note that each $f\in F[[X]]$ may be uniquely written in the form $f=p^rX^su$, where $r\in \Q_{>0}$, $s\in \Z_{\geq 0}$, and $u$ is a unit in $F[[X]]$. 
The following lemma is obtained similarly to Corollary \ref{cor.tauk}. 
\begin{lem} \label{lem.tauk} 
Suppose $\dfrac{\der f_n}{\der y}(\ol{\alpha},\ol{\alpha})\neq 0$, so that 
Hensel's lemma for $f_n(x,y) \in F[[x-\ol{\alpha}]][y]$ yields the implicit function $\ol{y}_f(x)\in F[[x-\ol{\alpha}]]$ around $(\ol{\alpha},\ol{\alpha})$, and put $\ol{\tau}=\tau_n(x,\ol{y}_f(x))\in F[[x-\ol{\alpha}]]$. 
Then $\ol{\tau}=0$ holds only at $x=\ol{\alpha}$. Especially, $\ol{\tau}$ is not a constant. 

Suppose instead $\dfrac{\der f_n}{\der x}(\ol{\alpha},\ol{\alpha})\neq 0$, so that $\ol{x}_f(y)\in F[[y-\ol{\alpha}]]$ is defined and put $\ol{\tau}=\tau(\ol{x}_f(y),y) \in O[[y-\ol{\alpha}]]$. Then a similar result holds. 
\end{lem}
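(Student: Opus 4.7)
The plan is to mimic, verbatim over the residue field $F$, the chain-rule computation used in the proof of Corollary \ref{cor.d2}: that derivation consists of purely algebraic manipulations of the polynomial expressions for $f_n$ and $\tau_n$ from Propositions \ref{prop.f} and \ref{prop.Tran}, hence is valid over any commutative ring in which the denominators appearing are units.

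First I would observe that $\overline{\tau}$ lies in the maximal ideal of $F[[x - \overline{\alpha}]]$. Indeed, evaluating at $x = \overline{\alpha}$ gives $\overline{\tau}|_{x = \overline{\alpha}} = \tau_n(\overline{\alpha}, \overline{\alpha}) = 0$, since $k_n(\overline{\alpha}) = 0$ and $k_n$ divides $\tau_n(x,x)$ over $\Z$ by Proposition \ref{prop.k1}. In particular $\overline{\tau}$ cannot be a nonzero constant. To establish that $\overline{\tau}$ is not the zero power series either---which, together with the previous remark, is the content of the claim that $\overline{\tau} = 0$ holds only at $x = \overline{\alpha}$ and that $\overline{\tau}$ is non-constant---I would exhibit a nonvanishing higher derivative at $\overline{\alpha}$.

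The second derivative is obtained by implicit differentiation of $f_n(x, \overline{y}_f(x)) = 0$ together with Leibniz's rule applied to $\overline{\tau}(x) = \tau_n(x, \overline{y}_f(x))$; this reproduces the computation in Corollary \ref{cor.d2} and yields, in $F$,
\[
\frac{d^2 \overline{\tau}}{dx^2}\bigg|_{x = \overline{\alpha}} \;=\; \frac{2(3n-1)^2}{(\overline{\alpha} - 2)\bigl(n\overline{\alpha}^2 - 2n\overline{\alpha} - 1\bigr)^2}.
\]
The denominator is a unit in $F$: we have $\overline{\alpha} \ne 2$ because $k_n(2) = (-1)^{n-1}$ is a unit by Corollary \ref{lem.k2}, while $n\overline{\alpha}^2 - 2n\overline{\alpha} - 1 \ne 0$ since this factor is exactly the numerator in the formula for $\partial f_n/\partial y(\overline{\alpha}, \overline{\alpha})$ from Subsection \ref{subsec.tangent}, which is nonzero by hypothesis. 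Together with $p > 2$, this shows that when $p \nmid 3n - 1$ the numerator is also nonzero, so $\overline{\tau}$ has a zero of order exactly $2$ at $\overline{\alpha}$ and is in particular a nonzero power series.

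The main obstacle is the degenerate case $p \mid 3n - 1$, where the right-hand side above vanishes. My plan is to push the chain-rule recursion to higher orders using the recurrences $f_{n+1} - z f_n + f_{n-1} = 0$ (from the definition of $f_n$) and $\tau_{n+1} - z \tau_n + \tau_{n-1} - 2(x-2) = 0$ (Proposition \ref{prop.Tran}(2)), and to identify the first nonvanishing coefficient of $\overline{\tau}$ by exploiting the multiplicity of $\overline{\alpha}$ as a root of $k_n$ in $F$, whose structure is controlled by Proposition \ref{prop.k2}; one expects the order of vanishing of $\overline{\tau}$ to be exactly $2m$ in line with Theorem \ref{thm.L}. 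The symmetric assertion for $\overline{\tau} = \tau_n(\overline{x}_f(y), y) \in F[[y - \overline{\alpha}]]$ under the alternative hypothesis $\partial f_n/\partial x(\overline{\alpha}, \overline{\alpha}) \ne 0$ follows by the parallel argument, using the formula for $\frac{d^2 \overline{\tau}}{dy^2}|_{y = \overline{\alpha}}$ from Corollary \ref{cor.d2}, whose denominator involves $(2n-1)\overline{\alpha}^2 + (-4n+2)\overline{\alpha} + 1$ and is likewise a unit by the hypothesis on $\partial f_n/\partial x$.
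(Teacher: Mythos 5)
Your argument is complete only when $p\nmid 3n-1$; for the remaining case you state a plan (``push the chain-rule recursion to higher orders'') but do not carry it out, and this is a genuine gap rather than a routine omission. The case $p\mid 3n-1$ actually occurs (the paper's examples include $n=-3$, $p=5$, $\ol{\alpha}=3$ and $n=5$, $p=7$, $\ol{\alpha}=3$), and there your analysis already breaks one step earlier than you acknowledge: when $p\mid 3n-1$ and $n$ is odd, $\ol{\alpha}=3$ is a root of $k_n$ (Corollary \ref{lem.k2} gives $k_n(3)=\pm\frac{3n-1}{2}$), and at $\ol{\alpha}=3$ one has $n\ol{\alpha}^2-2n\ol{\alpha}-1=3n-1=0$ in $F$ even though $\dfrac{\der f_n}{\der y}(\ol{\alpha},\ol{\alpha})\neq 0$ --- the closed formula for $\dfrac{\der f_n}{\der y}$ carries the vanishing factor $\ol{\alpha}-3$ in its denominator, so ``nonzero fraction $\Rightarrow$ nonzero numerator'' fails, and your second-derivative formula degenerates to $0/0$ rather than merely having zero numerator. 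Even setting that aside, detecting the first nonvanishing coefficient of $\ol{\tau}$ by iterated ordinary derivatives is problematic in characteristic $p$ (a zero of order $s\geq p$ is invisible to the $s$-th derivative), so the unexecuted plan is not obviously executable.

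The paper takes a different and uniform route: Lemma \ref{lem.tauk} is deduced ``similarly to Corollary \ref{cor.tauk}'', i.e.\ from the classification of the common zeros of $f_n$ and $\tau_n$ over $\ol{F}$ (Proposition \ref{prop.nap}, the finite-field analogue of Theorem \ref{thm.na}): if $\ol{\tau}$ were the zero power series, then $\tau_n$ would vanish along the whole formal branch $y=\ol{y}_f(x)$ of $f_n=0$ through $(\ol{\alpha},\ol{\alpha})$, producing a positive-dimensional family of common zeros, whereas the common zeros lie on $x=y$ and correspond to the finitely many roots of $k_n\neq 0$. This requires no derivative computation and covers $p\mid 3n-1$ without extra effort. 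Your computation of $\dfrac{d^2\ol{\tau}}{dx^2}\big|_{x=\ol{\alpha}}$ is correct and valuable when $p\nmid 3n-1$ (it is essentially the content of Lemma \ref{lem.order2}), but as a proof of the present lemma it is incomplete.
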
 

\subsection{Universal deformations} \label{subsec.univdefo} 
In this subsection, we study universal deformations of absolutely irreducible residual representations of twist knots, 
whose definitions were given in the introduction. 
We fix a finite field $F$ with characteristic $p>2$ and a CDVR $O$ with the residue field $F$. 
For each local ring $R$, let $\mf{m}_R$ denote the maximal ideal. 
Representations $\rho,\rho'$ over a local ring $R$ are said to be \emph{strictly equivalent} if they are conjugate to each other and their images via $\mod \mf{m}_R$ coincide. 
A map satisfying the $\SL_2$-trace relations is called a \emph{pseudo-representation}, which was initially introduced by Wiles and Taylor for $\GL_2$ and $\GL_n$ (cf.~\cite[Section 1]{MTTU}, \cite[Subsection 1.1]{KMTT2018}). 
The following assertions due to Nyssen and Carayol play key rolls in our argument. Let $\pi$ be any group. 

\begin{lem}[{\rm \cite[Theorem 1]{Nyssen1996}}] \label{Nyssen} 
Let $R$ be a Henselian separated local ring with the maximal ideal $\mf{m}_R$ and the residue field $F$.
If there are a pseudo-representation $T:\pi\to R$ of degree two and 
an absolutely irreducible representation $\ol{\rho}:\pi\to \GL_2(F)$ with $\tr \ol{\rho}=T\mod \mf{m}_R$, 
then there is a representation $\rho:\pi\to \GL_2(R)$ with $\tr\rho=T$, which is unique up to strict equivalence. 
\end{lem}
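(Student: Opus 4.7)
The plan is to construct a free $R$-algebra $A$ of rank $4$, equipped with a multiplicative map $\pi \to A^\times$ whose composition with a natural ``trace'' on $A$ recovers $T$, and then to identify $A$ with $\M_2(R)$ using the Henselian hypothesis. First I would invoke absolute irreducibility of $\ol{\rho}$ together with Burnside's density theorem to produce elements $g_1=1, g_2, g_3, g_4 \in \pi$ whose images $\ol{\rho}(g_i)$ form an $F$-basis of $\M_2(F)$. The Gram matrix $(T(g_i g_j))_{i,j}$ over $R$ reduces mod $\mf{m}_R$ to the Gram matrix of $(\ol{\rho}(g_i))_i$ with respect to the trace pairing on $\M_2(F)$, which is non-degenerate since $\mathrm{char}\,F\neq 2$. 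Nakayama then forces $\det(T(g_i g_j))\in R^\times$, so $B(x,y):=T(xy)$ restricts to a perfect pairing on the free $R$-submodule $A:=\bigoplus_i R g_i$ sitting inside $R[\pi]$.

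Next, I would use the pseudo-representation identities --- in particular the three-variable $\SL_2$-trace relation --- to show that the $R$-linear projection $\Pi\colon R[\pi]\to A$ characterized by $B(\Pi(x),g_i)=T(xg_i)$ is a homomorphism when $A$ is equipped with the multiplication $g_i\cdot g_j:=\Pi(g_ig_j)$. These identities are precisely what ensures associativity and forces $\Pi$ to intertwine left multiplication, so that the composite $\pi \hookrightarrow R[\pi] \twoheadrightarrow A$ becomes a group homomorphism landing in $A^\times$. Since $A\otimes_R F\cong \M_2(F)$ with the reduction realizing $\ol{\rho}$, the $R$-algebra $A$ is an Azumaya algebra of rank $4$.

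Because $R$ is Henselian with residue field $F$, the restriction $\mathrm{Br}(R)\to\mathrm{Br}(F)$ is injective, so $A$ is split: I can fix an isomorphism $A\cong\M_2(R)$ lifting the chosen identification at the special fibre, and composition yields the desired $\rho$ with $\tr\rho=T$. For uniqueness, given two lifts $\rho_1,\rho_2$ with the same trace, both factor through the same $A$ constructed above, and any two splittings of $A$ differ by an inner automorphism (Skolem--Noether for Azumaya algebras over a Henselian local base); after correcting by the scalar centralizer of $\ol{\rho}$, the resulting conjugator is congruent to the identity modulo $\mf{m}_R$, which is the definition of strict equivalence.

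The hard part will be the algebra-structure step: verifying that the multiplication on $A$ determined by $T$ is genuinely associative and that $\Pi$ is multiplicative. This reduces to a careful manipulation of the three-argument $\SL_2$-trace identity and is exactly where the hypothesis ``pseudo-representation of degree two'' is consumed; it is also the technical heart of Nyssen's original argument (and of Rouquier's companion framework via Cayley--Hamilton algebras), so I would lean on those treatments rather than re-derive the identities from scratch.
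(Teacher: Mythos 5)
The paper does not prove this lemma at all: it is imported verbatim as \cite[Theorem 1]{Nyssen1996}, so there is no in-paper argument to compare against. Judged on its own, your sketch is a faithful outline of the standard Nyssen--Rouquier proof: Burnside gives $g_1,\dots,g_4$ with $\ol{\rho}(g_i)$ a basis of $\M_2(F)$; the Gram matrix $(T(g_ig_j))$ is a unit because its reduction is the (always non-degenerate) trace form of $\M_2(F)$ --- note that $\mathrm{char}\,F\neq 2$ is irrelevant here, since $\tr(e_{ij}e_{kl})=\delta_{jk}\delta_{li}$ is perfect in any characteristic; where $2$ (more generally $d!$) being invertible actually enters is in the formalism of degree-$d$ pseudo-representations itself. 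The Azumaya step and the Skolem--Noether/Schur argument for uniqueness up to strict equivalence are both correct as you state them. The one place your write-up is not a proof is exactly the place you flag: showing that $x-\Pi(x)$ lies in $\ker T=\{u: T(uy)=0\ \forall y\}$ for \emph{all} $y$ (not just $y=g_j$), equivalently that $R[\pi]/\ker T$ is free of rank $4$ and the induced product is associative with $\Pi$ multiplicative. That step consumes the full three-variable degree-two identity together with residual absolute irreducibility, and it is the entire content of the theorem; deferring it to Nyssen and Rouquier is legitimate as a citation but means your argument is a reduction to the quoted sources rather than an independent proof. Two small further remarks: your uniqueness step implicitly needs that any $\rho_i$ with $\tr\rho_i=T$ kills $\ker T$ (which follows from surjectivity of $\rho_i(R[\pi])=\M_2(R)$ via Nakayama and perfectness of the trace form on $\M_2(R)$), and the hypothesis that $R$ is \emph{separated} plays no role in your route, whereas it does appear in Nyssen's statement; it would be worth either locating where it is needed or noting that the Azumaya route does not use it.
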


\begin{lem}[{\rm \cite[Theorem 1]{Carayol1994}}] \label{Carayol} 
Let $\rho, \rho':\pi\to \GL_n(R)$ be representations over a local ring $R$ with the residue field $F$. 
If the residual representation $\rho\mod \mf{m}_R$ is absolutely irreducible and $\tr\rho=\tr\rho'$ holds, 
then $\rho$ and $\rho'$ are equivalent. 
\end{lem}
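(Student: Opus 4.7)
The plan is the classical trace-lifting argument for deformations of absolutely irreducible representations. First I would exploit the absolute irreducibility of $\ol{\rho}:=\rho\bmod\mf{m}_R$: by Burnside's theorem, $\ol{\rho}(\pi)$ spans $\M_n(F)$ as an $F$-vector space, so one can pick $g_1,\ldots,g_{n^2}\in\pi$ with $\{\ol{\rho}(g_i)\}_i$ an $F$-basis of $\M_n(F)$. Nakayama's lemma applied to the $R$-submodule of $\M_n(R)$ generated by $\{\rho(g_i)\}_i$ then shows these elements form an $R$-basis of $\M_n(R)$. Since $\tr\rho=\tr\rho'$ forces the characters of $\ol{\rho}$ and $\ol{\rho}'$ to agree, the Brauer--Nesbitt theorem together with the irreducibility of $\ol{\rho}$ yields $\ol{\rho}'\cong\ol{\rho}$ over $F$; lifting the intertwining matrix via the natural surjection $\GL_n(R)\surj\GL_n(F)$ (available because $R$ is local) and conjugating $\rho'$ accordingly, I may assume $\ol{\rho}'=\ol{\rho}$, whence $\{\rho'(g_i)\}_i$ is likewise an $R$-basis of $\M_n(R)$.

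Next I would define an $R$-linear endomorphism $\varphi:\M_n(R)\to\M_n(R)$ by $\rho(g_i)\mapsto\rho'(g_i)$ and verify $\varphi(\rho(g))=\rho'(g)$ for every $g\in\pi$. Writing $\rho(g)=\sum_i a_i(g)\rho(g_i)$ uniquely with $a_i(g)\in R$, the trace hypothesis yields
\[\tr\bigl(\rho'(h)\rho'(g)\bigr)=\tr\rho'(hg)=\tr\rho(hg)=\sum_i a_i(g)\,\tr\bigl(\rho'(h)\rho'(g_i)\bigr)\]
for every $h\in\pi$, so the element $\rho'(g)-\sum_i a_i(g)\rho'(g_i)\in\M_n(R)$ pairs trivially against every $\rho'(h)$ under the bilinear form $(A,B)\mapsto\tr(AB)$ on $\M_n(R)$; since $\rho'(\pi)$ spans $\M_n(R)$ and the form is non-degenerate, this difference vanishes. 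It follows that $\varphi$ is multiplicative on $\rho(\pi)$, hence an $R$-algebra endomorphism of $\M_n(R)$, which is surjective (as $\rho'(\pi)$ already spans) and therefore an automorphism.

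Finally, the Skolem--Noether theorem for $\M_n(R)$ over the local ring $R$—where $\mathrm{Pic}(R)=0$, so every $R$-algebra automorphism is inner—produces $P\in\GL_n(R)$ with $\varphi(X)=PXP^{-1}$, yielding $\rho'(g)=P\rho(g)P^{-1}$ for all $g\in\pi$ as desired. The step I expect to be the main obstacle is the preliminary reduction to $\ol{\rho}'=\ol{\rho}$, which relies on three ingredients being set up compatibly: the absolute irreducibility of $\ol{\rho}'$ (which follows from Brauer--Nesbitt once one knows its semisimplification equals $\ol{\rho}$), the existence of an intertwiner in $\GL_n(F)$, and the lifting of this intertwiner to $\GL_n(R)$; once this reduction is in place, the non-degeneracy of the trace form on $\M_n(R)$ (clear from the matrix-unit computation $\tr(E_{ij}E_{kl})=\delta_{il}\delta_{jk}$) and the invocation of Skolem--Noether are essentially automatic.
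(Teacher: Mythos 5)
The paper does not prove this lemma at all; it is quoted directly from Carayol, so there is no in-paper argument to compare against and I can only judge your proof on its own terms. Your overall strategy --- show that $\rho(\pi)$ and $\rho'(\pi)$ both span $\M_n(R)$, use the equality of traces together with the unimodularity of the trace pairing on $\M_n(R)$ to produce an $R$-algebra automorphism $\varphi$ intertwining $\rho$ and $\rho'$, and conclude by Skolem--Noether over the local ring $R$ --- is exactly the standard Carayol-style argument, and everything from the definition of $\varphi$ onward is correct.

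The gap is in the preliminary reduction to $\ol{\rho}'=\ol{\rho}$, i.e.\ precisely at the step you flag as the main obstacle: you need $\rho'(\pi)$ to span $\M_n(R)$, and you get this from ``Brauer--Nesbitt applied to traces.'' But Brauer--Nesbitt identifies semisimple representations from their \emph{characteristic polynomials}; equality of traces recovers the characteristic polynomials only when ${\rm char}\,F=0$ or ${\rm char}\,F>n$ (Newton's identities), and in characteristic $p\le n$ two non-isomorphic semisimple representations of the same dimension can have identical traces (e.g.\ $\mathbf{1}^{\oplus 2p}$ and $(\chi\oplus\chi^{-1})^{\oplus p}$ for a nontrivial character $\chi$ both have identically zero trace). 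The lemma as stated carries no hypothesis relating $n$ and ${\rm char}\,F$, so as written your proof only covers ${\rm char}\,F>n$ --- which, to be fair, is all the paper needs, since it applies the lemma with $n=2$ and $p>2$. The clean fix stays entirely inside your own framework and makes the reduction to $\ol{\rho}'=\ol{\rho}$ unnecessary: the Gram matrix $\bigl(\tr(\rho'(g_i)\rho'(g_j))\bigr)_{i,j}=\bigl(\tr\rho'(g_ig_j)\bigr)_{i,j}=\bigl(\tr\rho(g_ig_j)\bigr)_{i,j}$ coincides with the Gram matrix of the $R$-basis $\{\rho(g_i)\}$ with respect to the unimodular trace pairing, hence has unit determinant; reducing mod $\mf{m}_R$ and applying Nakayama shows that $\{\rho'(g_i)\}$ is already an $R$-basis of $\M_n(R)$ in every characteristic, after which the rest of your argument runs unchanged.
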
 

The following lemma is a corollary of \cite[Theorem 2.2.4]{KMTT2018}. 

\begin{lem} \label{lem.ud} 
Let $\ol{\rho}:\pi_n\to \SL_2(F)$ be an absolutely irreducible residual representation. 

If $\alpha \in O$ is a lift of $\ol{\alpha}=\tr \ol{\rho}(a)$ 
and $\rho:\pi_n\to \SL_2(O[[x-\alpha]])$ is a deformation of $\ol{\rho}$ with $\tr \rho(a)=x$, 
then $\rho$ is a universal deformation of $\ol{\rho}$. 

Similarly, if $\beta\in O$ is a lift of $\ol{\beta}=\tr \ol{\rho}(ab)$ 
and $\rho:\pi_n\to \SL_2(O[[y-\beta]])$ is a deformation of $\ol{\rho}$ with $\tr \rho(ab)=y$, 
then $\rho$ is a universal deformation of $\ol{\rho}$. 
\end{lem}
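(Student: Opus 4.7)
The plan is to verify that the pair $(O[[x-\alpha]],\rho)$ satisfies the universal property of the deformation functor of $\ol{\rho}$, by appealing to \cite[Theorem 2.2.4]{KMTT2018} and constructing an explicit $O$-algebra isomorphism. Since $\ol{\rho}$ is absolutely irreducible, Mazur's theory furnishes a universal deformation $\bs{\rho}:\pi_n\to \SL_2(\mca{R})$, unique up to strict equivalence and $O$-isomorphism; it then suffices to produce mutually inverse continuous $O$-algebra homomorphisms between $\mca{R}$ and $O[[x-\alpha]]$ intertwining $\bs{\rho}$ and $\rho$ up to strict equivalence.

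First I would construct the two maps. Universality of $\bs{\rho}$ yields a unique continuous $O$-algebra morphism $\psi:\mca{R}\to O[[x-\alpha]]$ with $\psi_*\bs{\rho}$ strictly equivalent to $\rho$. In the opposite direction, since $\tr\bs{\rho}(a)-\alpha$ lies in $\mf{m}_{\mca{R}}$, the universal property of the power series ring over $O$ (as the free complete local $O$-algebra on one topological generator) provides a unique continuous $O$-algebra morphism $\varphi:O[[x-\alpha]]\to \mca{R}$ sending $x$ to $\tr\bs{\rho}(a)$.

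Next I would check that these are mutual inverses. The composition $\psi\circ\varphi$ sends $x$ to $\psi(\tr\bs{\rho}(a))=\tr(\psi_*\bs{\rho})(a)=\tr\rho(a)=x$, so $\psi\circ\varphi$ is the identity on $O[[x-\alpha]]$ by uniqueness of the continuous $O$-algebra map determined by the image of $x$. For the reverse composition $\varphi\circ\psi:\mca{R}\to\mca{R}$, universality of $\mca{R}$ reduces the claim that $\varphi\circ\psi$ is the identity to the statement that $(\varphi\circ\psi)_*\bs{\rho}$ is strictly equivalent to $\bs{\rho}$. By Lemma \ref{Carayol} applied to the absolutely irreducible residual $\ol{\rho}$, this in turn reduces to the equality $\tr((\varphi\circ\psi)_*\bs{\rho})(g)=\tr\bs{\rho}(g)$ for every $g\in\pi_n$.

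The main obstacle is this last trace equality, which amounts to showing that $\mca{R}$ is topologically generated as a complete $O$-algebra by the single element $\tr\bs{\rho}(a)$. Geometrically, the content of \cite[Theorem 2.2.4]{KMTT2018} identifies $\mca{R}$ with the completed local ring of the $\SL_2$-character scheme $\Spec O[x,y]/(f_n(x,y))$ at the point $(\ol{\alpha},\ol{\alpha})$, so the task is to recognize $x-\alpha$ as a regular parameter of this local ring. The hypothesis that a deformation $\rho$ with $\tr\rho(a)=x$ already exists forces $(\ol{\alpha},\ol{\alpha})$ to be a smooth point of $f_n=0$ at which $\partial f_n/\partial y$ is nonzero modulo the maximal ideal; Hensel's lemma applied to $f_n(x,y)\in O[[x-\alpha]][y]$ then resolves $\tr\bs{\rho}(ab)$ uniquely as a power series in $\tr\bs{\rho}(a)-\alpha$, and all remaining traces on $\pi_n$ are polynomials in $\tr\bs{\rho}(a)$ and $\tr\bs{\rho}(ab)$ via the $\SL_2$-trace identities applied to a Wirtinger presentation, so the single generator $\tr\bs{\rho}(a)$ suffices. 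The second assertion, in terms of $y-\beta$, follows by the symmetric argument with the roles of $x$ and $y$ interchanged and $\partial f_n/\partial x$ playing the nondegeneracy role.
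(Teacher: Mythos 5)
The paper offers essentially no argument for this lemma: it is stated as an immediate corollary of \cite[Theorem 2.2.4]{KMTT2018} (trace generation of the universal deformation ring of an absolutely irreducible residual representation), so there is no step-by-step proof to match yours against. Your overall strategy --- produce $\psi:\mca{R}\to O[[x-\alpha]]$ from universality, note it is surjective because $\psi(\tr\bs{\rho}(a))=x$, build the candidate inverse $\varphi$ sending $x\mapsto\tr\bs{\rho}(a)$, check $\psi\circ\varphi=\mathrm{id}$, and reduce $\varphi\circ\psi=\mathrm{id}$ via Lemma \ref{Carayol} and the Fricke--Vogt trace identities to the single claim that $\mca{R}$ is topologically generated over $O$ by $\tr\bs{\rho}(a)$ --- is a reasonable skeleton and correctly isolates where the cited theorem must enter.

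The gap is in how you discharge that last claim. You assert that ``the hypothesis that a deformation $\rho$ with $\tr\rho(a)=x$ already exists forces $(\ol{\alpha},\ol{\beta})$ to be a smooth point of $f_n=0$ at which $\der f_n/\der y$ is nonzero modulo the maximal ideal.'' This implication is not justified and does not follow formally. Differentiating the identity $f_n(x,y_\rho(x))=0$, where $y_\rho(x)=\tr\rho(ab)\in O[[x-\alpha]]$, only shows that $\dfrac{\der f_n}{\der y}(\ol{\alpha},\ol{\beta})=0$ would force $\dfrac{\der f_n}{\der x}(\ol{\alpha},\ol{\beta})=0$ as well; it does not rule out a singular point. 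At a node of $f_n=0$ both of whose branches are graphs over the $x$-line, a deformation over $O[[x-\alpha]]$ with $\tr\rho(a)=x$ can perfectly well exist (indeed two inequivalent ones), while the completed local ring of the character scheme --- hence $\mca{R}$ --- is strictly larger than $O[[x-\alpha]]$, so no such $\rho$ would be universal. Consequently the nondegeneracy $\dfrac{\der f_n}{\der y}(\ol{\alpha},\ol{\beta})\neq 0$ must either be imposed as a hypothesis (which is what the paper effectively does: Proposition \ref{prop.univdefo} and Theorem \ref{thm.L.precise} only invoke the lemma at regular points where the relevant partial derivative is nonzero, and regularity of the points of interest is checked separately) or be verified for the curves $f_n=0$ over $F$, which you do not do and which the paper only does along the diagonal at roots of $k_n$. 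A smaller slip: you write the point as $(\ol{\alpha},\ol{\alpha})$, but the lemma is not restricted to the diagonal; the residual character is $(\ol{\alpha},\ol{\beta})$ with $\ol{\beta}=\tr\ol{\rho}(ab)$ arbitrary, and the symmetric statement in $y-\beta$ should likewise be phrased at $(\ol{\alpha},\ol{\beta})$.
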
 

Universal deformations for twist knots are given by the following proposition. 

\begin{prop} \label{prop.univdefo} 
Let $(\alpha,\beta) \in O^2$ be a zero of $f_n(x,y)\in \Z[x,y]$ and suppose that the image $(\ol{\alpha},\ol{\beta}) \in F^2$ is a regular $F$-rational point of $f_n(x,y)=0$, that is, we have $\dfrac{\der f_n}{\der x}(\ol{\alpha},\ol{\beta})\neq 0$ or $\dfrac{\der f_n}{\der y}(\ol{\alpha},\ol{\beta})\neq 0$ in $F$. 
If $\dfrac{\der f_n}{\der y}(\ol{\alpha},\ol{\beta})\neq 0$, then the residual representation $\ol{\rho}$ at $(\ol{\alpha},\ol{\beta})$ admits a universal deformation $\bs{\rho}:\pi_n\to \SL_2(\mca{R})$ with $\mca{R}=O[[x-\alpha]]$. 
If instead $\dfrac{\der f_n}{\der x}(\ol{\alpha},\ol{\beta})\neq 0$, then so it does with $\mca{R}=O[[y-\beta]]$. 
\end{prop}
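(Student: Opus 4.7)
The plan is to build $\bs{\rho}$ in three steps: extend the residual character $(\ol{\alpha},\ol{\beta})$ to a formal family over $\mca{R}$ via Hensel's lemma; promote this family to an actual representation using Nyssen's theorem on pseudo-representations; and conclude universality via Lemma \ref{lem.ud}. We treat the case $\dfrac{\der f_n}{\der y}(\ol{\alpha},\ol{\beta}) \neq 0$ with $\mca{R}=O[[x-\alpha]]$; the other case is symmetric, with $x$ and $y$ exchanged and $\mca{R}=O[[y-\beta]]$.

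Since $\mca{R}$ is complete local (hence Henselian separated) and the value $\dfrac{\der f_n}{\der y}(\alpha,\beta)$ is a unit in $\mca{R}$ (its reduction being nonzero in $F$), Hensel's lemma applied to $f_n(x,y)\in\mca{R}[y]$ at $y=\beta$ produces a unique $y_f(x)\in\mca{R}$ with $y_f(\alpha)=\beta$ and $f_n(x,y_f(x))=0$, lifting the $\ol{y}_f$ of Lemma \ref{lem.tauk}. By the Culler--Shalen parameterization of $2$-bridge knot characters together with the Fricke $\SL_2$-trace identities, every trace $\tr\rho(g)$ along the component $\{f_n=0\}$ is given by a universal polynomial $P_g\in\Z[x,y]$; set $T(g):=P_g(x,y_f(x))\in\mca{R}$. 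The pseudo-representation axioms $T(1)=2$, $T(gh)=T(hg)$, $T(gh)+T(gh^{-1})=T(g)T(h)$, together with the $\SL_2$-determinant identity $(T(g)^2-T(g^2))/2\equiv 1$ (legal since $p>2$), hold as polynomial identities modulo $f_n$ --- verified over $\C$ through Riley's representation and transported to $\mca{R}$ by specialization. Since $\ol{\rho}$ is absolutely irreducible and $T\bmod\mf{m}_{\mca{R}}=\tr\ol{\rho}$, Lemma \ref{Nyssen} yields a representation $\bs{\rho}:\pi_n\to\GL_2(\mca{R})$, unique up to strict equivalence, lifting $\ol{\rho}$ with $\tr\bs{\rho}=T$; the determinant identity forces $\bs{\rho}$ into $\SL_2(\mca{R})$.

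Since $\tr\bs{\rho}(a)=T(a)=x$ by construction, Lemma \ref{lem.ud} immediately yields universality. The main obstacle will be the pseudo-representation step: one must handle the $\Z$-integral structure of the character variety carefully so that the trace polynomials $P_g$ assemble into a pseudo-representation modulo $f_n$, and deduce the determinant identity on the same quotient. Once this Culler--Shalen/Fricke input is in place, Nyssen's theorem and Lemma \ref{lem.ud} combine to give both the existence and universality of $\bs{\rho}$ with no further work.
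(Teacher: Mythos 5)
Your proposal follows essentially the same route as the paper: Hensel's lemma produces $y_f(x)\in O[[x-\alpha]]$, the trace relations define a pseudo-representation $\bs T$ with $\bs T(a)=x$ and $\bs T(ab)=y_f(x)$, Nyssen's theorem (Lemma \ref{Nyssen}) lifts it to a representation, and Lemma \ref{lem.ud} gives universality. The paper merely asserts in passing that $\bs T$ is a pseudo-representation where you spell out the verification via specialization from Riley's representation, but the argument is the same.
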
 

\begin{proof}
If $\dfrac{\der f_n}{\der y}(\ol{\alpha},\ol{\beta})\neq 0$ in $F$, then we have $\dfrac{\der f_n}{\der y}(\alpha,\beta)\neq 0$ in $O$, and Hensel's lemma for $f_n(x,y) \in O[[x-\alpha]][y]$ 
yields a unique element $y_f(x) \in O[[x-a]]$ satisfying $y_f(\alpha)=\beta$ and $f(x,y_f(x))=0$. 
Define a map ${\bs T}:\pi_n \to O[[x-\alpha]]$ by ${\bs T}(a)=x,$ ${\bs T}(ab)=y_f(x)$, and the $\SL_2$-trace relations. 
(This map ${\bs T}$ may be directly proved to be the universal pseudo-representation of $\tr \ol{\rho}$ over $O[[x-\alpha]]$.) 
By Lemma \ref{Nyssen}, there exists a deformation $\bs{\rho}:\pi_n\to \SL_2(O[[x-\alpha]])$ over $O$ of $\ol{\rho}$, 
which is unique up to strict equivalence. 
Now Lemma \ref{lem.ud} yields that this $\bs{\rho}$ is a universal deformation over $O$ of $\ol{\rho}$. 

If instead $\dfrac{\der f_n}{\der x}(\ol{\alpha},\ol{\beta})\neq 0$, then we have $x_f(y)\in O[[y-\beta]]$ satisfying $x_f(\beta)=\alpha$ and $f(x_f(y),y))=0$, yielding the existence of a universal deformation $\bs \rho:\pi_n\to \SL_2(O[[y-\beta]])$. 
\end{proof} 

The proposition above slightly strengthen \cite[Corollaries 4.3, 4.4]{KMTT2018} by removing the assumption of the existence of a deformation over $O[[x-\alpha]]$.

Now we recall the relationship between universal deformations and Riley's universal representation. 
For each $(x,y)\in O^2$ with $f_n(x,y)=0$, Riley's representation at $(x,y)$ over a quadratic extension of $O$ is given by 
$$\rho^R(a)=\spmx{\dfrac{x+\sqrt{x^2-4}}{2}& 1\\0&\dfrac{x-\sqrt{x^2-4}}{2}}, \ \ \rho^R(b)=\spmx{\dfrac{x+\sqrt{x^2-4}}{2}& 0\\ -(x^2-y-2)&\dfrac{x-\sqrt{x^2-4}}{2}}$$
(cf. \cite[Lemma 7]{Riley1985}). 
This $\rho^R$ may be 
regarded a function over the curve $f_n(x,y)=0$ and is called Riley's universal representation. 

Suppose again $(\alpha,\beta)\in O^2$ with $f_n(\alpha,\beta)=0$ such that a residual representation $\ol{\rho}$ at the image $(\ol{\alpha},\ol{\beta}) \in F^2$ is absolutely irreducible. 
If $\dfrac{\der f_n}{\der y}(\ol{\alpha},\ol{\beta})\neq 0$, then by putting $y=y_f(x)\in O[[x-\alpha]]$, this $\rho^R$ may be regarded as that over a quadratic extension of $O[[x-\alpha]]$. 
Since $\tr \rho^R$ is a deformation of $\tr\ol{\rho}$ over $O[[x-\alpha]]$, by Theorems of Nyssen and Carayol, 
this $\rho^R$ is equivalent to some deformation ${\bs \rho}$ of $\ol{\rho}$ over $O[[x-\alpha]]$, 
which is indeed a universal deformation of $\ol{\rho}$ over $O$ by Lemma \ref{lem.ud}. 

Now recall that the Reidemeister torsion is an invariant of conjugacy classes of representations. 
Since the explicit calculation of Reidemeister torsion with use of the Fox free derivative is compatible with putting $y=y_f(x)$, 
we see that $\tau_{\bs \rho}$ is the image of $\tau_n(x,y)$ via 
the natural map $O[x,y]\to O[[x-\alpha]]; y\mapsto y_f(x)$ given by Hensel's lemma. 

If instead $\dfrac{\der f_n}{\der x}(\alpha,\beta)\neq 0$, then we similarly obtain a universal deformation ${\bs \rho}$ over $O[[y-\beta]]$ so that 
$\tau_{\bs \rho}$ is the image of $\tau_n(x,y)$ via $O[x,y]\to O[[y-\beta]]; x\mapsto x_f(y)$. 

\begin{prop} \label{prop.tau} 
Let the notation be as in Proposition \ref{prop.univdefo}. 
If $y_f(x)$ is defined, then we have $\tau_{\bs \rho}=\tau_n(x,y_f(x)) \in O[[x-\alpha]]$. 
If $x_f(y)$ is defined, then we have $\tau_{\bs \rho}=\tau_n(x_f(y),y) \in O[[y-\beta]]$. 
\end{prop}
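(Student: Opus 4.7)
The plan is to realize $\bs\rho$ explicitly by substituting $y=y_f(x)$ into Riley's universal representation and then to invoke the functoriality of the Fox-calculus formula for $\tau_n(x,y)$ under this substitution, concluding via the conjugacy-invariance of Reidemeister torsion.

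First, I would consider the case $\dfrac{\der f_n}{\der y}(\ol\alpha,\ol\beta)\neq 0$, and work if necessary over a quadratic extension $O'$ of $O$ adjoining $\sqrt{x^2-4}$. Substituting $y=y_f(x)\in O[[x-\alpha]]$ into Riley's formulas yields a representation $\rho^R\colon\pi_n\to\SL_2(O'[[x-\alpha]])$ whose residual representation at $x=\alpha$ equals (a conjugate of) the absolutely irreducible $\ol\rho$. By Lemma \ref{Nyssen} (Nyssen), the pseudo-representation $\tr\rho^R$ is realized by a representation unique up to strict equivalence, and by Lemma \ref{Carayol} (Carayol) applied to the base change of $\bs\rho$ to $O'[[x-\alpha]]$, this representation is strictly equivalent to $\bs\rho$. (The descent from $O'$ back to $O$ is harmless because Reidemeister torsion is invariant under conjugation and under base change of scalars.)

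Second, I would recall the explicit formula of Lemma \ref{lem.tau-det}: for a suitable choice of $k$, we have $\tau_\rho=\det\rho(P_k)/\det\rho(a_k-1)$, computed from the Wirtinger presentation via Fox derivatives. Proposition \ref{prop.Tran} was obtained in precisely this way using $\rho^R$, and produces $\tau_n(x,y)\in\Z[x,y]$. Since Fox derivatives take values in $\Z[\pi_n]$ and all entries of $\rho^R(g)$ for $g\in\pi_n$ are Laurent polynomials in $(x,y,\sqrt{x^2-4})$ with square roots canceling in the $\SL_2$-determinant, the diagram
\[
\xymatrix{
\Z[x,y] \ar[r]^-{\tau_n(x,y)} \ar[d] & O[[x-\alpha]] \\
O[[x-\alpha]] \ar[ur]_-{y\mapsto y_f(x)} &
}
\]
commutes; that is, evaluating $\rho^R$ after the substitution $y\mapsto y_f(x)$ and then applying the Fox-calculus determinant gives the image of $\tau_n(x,y)$ under $y\mapsto y_f(x)$. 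Hence $\tau_{\rho^R|_{y=y_f(x)}}=\tau_n(x,y_f(x))$ in $O[[x-\alpha]]$.

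Third, since $\bs\rho$ is strictly equivalent to $\rho^R|_{y=y_f(x)}$ and Reidemeister torsion is an invariant of equivalence classes of $\SL_2$-representations (the ambiguity $\pm\Im\det$ from Lemma \ref{lem.tau-det} is trivial because $\det\SL_2=1$ and signs are fixed by the residual condition), we conclude $\tau_{\bs\rho}=\tau_n(x,y_f(x))$. The case $\dfrac{\der f_n}{\der x}(\ol\alpha,\ol\beta)\neq 0$ is entirely symmetric, exchanging the roles of $x$ and $y$ and using the implicit function $x_f(y)\in O[[y-\beta]]$. The main obstacle is the careful bookkeeping of the quadratic extension $O'/O$ needed to write $\rho^R$ in matrix form: one must verify that, although $\rho^R$ a priori lives only over $O'[[x-\alpha]]$, its $\SL_2$-trace $\tr\rho^R$ lies in $O[[x-\alpha]]$ and, via Nyssen--Carayol, identifies $\rho^R$ (up to conjugation in $\GL_2$) with a representation defined over $O[[x-\alpha]]$ strictly equivalent to $\bs\rho$, preserving the torsion.
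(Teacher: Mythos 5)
Your proposal is correct and follows essentially the same route as the paper: the authors likewise substitute $y=y_f(x)$ into Riley's universal representation over a quadratic extension of $O[[x-\alpha]]$, identify the result with the universal deformation $\bs\rho$ via the Nyssen--Carayol lemmas and Lemma \ref{lem.ud}, and conclude by the conjugacy-invariance of the Reidemeister torsion together with the compatibility of the Fox-calculus computation with the substitution $y\mapsto y_f(x)$. Your extra bookkeeping of the quadratic extension $O'$ and the descent is a harmless elaboration of what the paper leaves implicit.
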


\subsection{Non-trivial $L$-functions of twist knots} \label{subsec.L} 

In this subsection, we determine all residual representations with non-trivial $L$-functions, as well as determine the $L$-function themselves. Let $F$ and $O$ be as in the previous section and 
let $\ol{\rho}:\pi_n\to \SL_2(F)$ be an absolutely irreducible residual representation 
admitting a universal deformation ${\bs \rho}:\pi_n\to \SL_2(\mca{R})$ over $O$. 
\begin{prop} The 1st homology group $H_1(\pi_n, {\bs \rho})$ is a finitely generated torsion $\mca{R}$-module, hence the $L$-function $L_{\bs \rho}$ is defined. 
\end{prop}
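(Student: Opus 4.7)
The plan is to verify the two claims separately: finite generation via the Lyndon resolution, and torsionness via the non-vanishing of the Reidemeister torsion $\tau_{\bs\rho}$ in $\mathrm{fr}\,\mca{R}$.

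First I would exploit the Wirtinger presentation $\pi_n=\langle a,b\mid aw^n=w^nb\rangle$ to write down the Lyndon exact sequence, yielding after tensoring with the right $\mca{R}[\pi_n]$-module $V_{\bs\rho}=\mca{R}^2$ a chain complex
\[
C_{\bs\rho,*}\colon\ 0\to \mca{R}^2\xrightarrow{\partial_2}\mca{R}^4\xrightarrow{\partial_1}\mca{R}^2\to 0
\]
of finitely generated free $\mca{R}$-modules. Since $\mca{R}=O[[x-\alpha]]$ or $O[[y-\beta]]$ is Noetherian, each $H_i(\pi_n,\bs\rho)=H_i(C_{\bs\rho,*})$ is automatically a finitely generated $\mca{R}$-module. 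So finite generation is immediate.

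Next I would show the torsionness. Since $\mca{R}$ is an integral domain, it suffices to prove that $C_{\bs\rho,*}\otimes_{\mca{R}}\mathrm{fr}\,\mca{R}$ is acyclic, which by Lemma \ref{lem.tau-det} is equivalent to the non-vanishing of $\tau_{\bs\rho}$ in $\mathrm{fr}\,\mca{R}$. By Proposition \ref{prop.tau}, we have $\tau_{\bs\rho}=\tau_n(x,y_f(x))$ in $\mca{R}=O[[x-\alpha]]$ (or the analogous formula with $x_f(y)$). Reducing modulo the maximal ideal $\mf{m}_O\subset O$, and using the compatibility of Hensel's lemma with reduction, this element reduces to $\ol{\tau}=\tau_n(x,\ol{y}_f(x))\in F[[x-\ol\alpha]]$, which is precisely the object of Lemma \ref{lem.tauk}. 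That lemma ensures $\ol\tau\neq 0$, hence $\tau_{\bs\rho}\not\equiv 0\pmod{\mf{m}_O}$, hence $\tau_{\bs\rho}\neq 0$ in $\mca{R}$, and in particular in $\mathrm{fr}\,\mca{R}$. Therefore $H_i(\pi_n,\bs\rho)\otimes_{\mca{R}}\mathrm{fr}\,\mca{R}=0$ for all $i$, so each $H_i$ is a finitely generated torsion $\mca{R}$-module. Since $\mca{R}$ is a regular (in particular Noetherian and UFD) local ring, the order is defined, and we set $L_{\bs\rho}=\mathrm{ord}\,H_1(\pi_n,\bs\rho)\in \mca{R}/\dot{=}$.

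The only delicate point is that $\tau_{\bs\rho}$ does not a priori belong to $\mca{R}$ (it is a ratio of determinants lying in $\mathrm{fr}\,\mca{R}$), so one must justify that the reduction of the rational expression is well defined and matches $\bar\tau$. This is handled by the explicit formula $\tau_{\bs\rho}=\tau_n(x,y_f(x))$ of Proposition \ref{prop.tau}, which realises $\tau_{\bs\rho}$ as the image of the polynomial $\tau_n(x,y)\in\Z[x,y]$ under the ring homomorphism $y\mapsto y_f(x)$; this homomorphism is compatible with reduction modulo $\mf{m}_O$, so the reduction of $\tau_{\bs\rho}$ really is $\ol\tau$. With that cleared, the conclusion is immediate.
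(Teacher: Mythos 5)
Your proof is correct, but it takes a genuinely different route from the paper's. The paper does not argue directly on the Lyndon complex over $\mca{R}$: instead it produces an \emph{acyclic} specialization $\rho:\pi_n\to\SL_2(O)$ of the universal deformation (using Theorem \ref{thm.na} over an algebraic closure of ${\rm fr}O$ to see that all but finitely many deformations $\alpha$ of $\ol{\alpha}$ give $\tau_n(\alpha,\beta)\neq 0$, and arranging $2-\alpha\neq 0$), and then invokes the external specialization theorem \cite[Theorem 3.2.4]{KMTT2018} to conclude that $H_1(\pi_n,\bs\rho)$ is finitely generated torsion; the authors even flag afterwards that this step uses an acyclic deformation. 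Your argument is more self-contained: finite generation is indeed automatic from Noetherianity of $\mca{R}$ and the finite free Lyndon complex, and torsionness follows once $\tau_{\bs\rho}\neq 0$ in ${\rm fr}\mca{R}$, which you detect after reduction modulo $\mf{m}_O$. Two small points to tighten: (i) Lemma \ref{lem.tauk} is stated only for points $(\ol{\alpha},\ol{\alpha})$ on the diagonal, i.e.\ for non-acyclic $\ol{\rho}$; when $\ol{\rho}$ is acyclic you should instead just observe that $\ol{\tau}(\ol{\alpha})=\tau_{\ol{\rho}}\neq 0$, so $\ol{\tau}$ has nonzero constant term; (ii) before using Lemma \ref{lem.tau-det} to equate acyclicity over ${\rm fr}\mca{R}$ with $\tau_{\bs\rho}\neq 0$, one should note that the denominator $\det(\bs\rho(a)-I_2)=2-\tr\bs\rho(a)$ is a nonzero element of the integral domain $\mca{R}$ (clear in $O[[x-\alpha]]$ since it equals $(2-\alpha)-(x-\alpha)$, and checkable in $O[[y-\beta]]$ since $f_n(2,y)\not\equiv 0$). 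With these noted, your proof stands and has the advantage of not relying on the cited theorem from \cite{KMTT2018}, while the paper's specialization argument is shorter given that citation and is the one the authors deliberately contrast with their later use of non-acyclic deformations.
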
 
\begin{proof} Suppose that $\ol{\rho}$ corresponds to a point $(\ol{\alpha},\ol{\beta}) \in F^2$. 
If $\dfrac{\der f_n}{\der y}(\ol{\alpha},\ol{\beta})\neq 0$, then we have a universal deformation ${\bs \rho}:\pi_n\to \SL_2(O[[x-\alpha]])$,
where $O[[x-\alpha]]$ is a Noetherian integral domain. 
In addition, for any deformation $\alpha \in O$ of $\ol{\alpha}$, 
there exists a unique deformation $\beta \in O$ of $\ol{\beta}$ satisfying $f_n(\alpha,\beta)=0$. 
By Theorem \ref{thm.na} over an algebraic closure of ${\rm fr}O$, 
for all but finitely many such $\alpha$, we have $\tau_n(\alpha,\beta)\neq 0$, that is,
a deformation $\rho:\pi_n\to \SL_2(O)$ at $(\alpha,\beta)$ is acyclic and $\Delta_{\rho,1}(1)\neq 0$ holds. 
Hence we may take $\alpha$ such that $\rho$ is acyclic. 
We may further assume $\det(\rho(a)-I_2)=2-\tr \rho(a)=2-\alpha \neq 0$. 
Now \cite[Theorem 3.2.4]{KMTT2018} yields that $H_1({\bs \rho})$ is a finitely generated torsion $O[[x-\alpha]]$-module. 
If instead $\dfrac{\der f_n}{\der x}(\ol{\alpha},\ol{\beta})\neq 0$, then a similar argument holds. Hence we obtain the assertion. 
\end{proof}

Note that in the proof above we used the existence of an acyclic deformation of $\ol{\rho}$. 
To the contrary, we will later use a non-acyclic deformation in the calculation of the $L$-function. 

As we remarked in the introduction, the $L$-function $L_{\bs \rho}$ is defined to be the order $\Delta_{\bs \rho,1}(1)$ of $H_1(\pi_n, {\bs \rho})$. 
\begin{rem} \label{rem.L} 
These $L$-functions $L_{\bs \rho}=\Delta_{\bs \rho,1}(1)$ may be seen as infinitesimal ${\rm SL}_2$-analogues of the classical Alexander polynomials. 
Perhaps one may think that the $L$-function should be defined to be $\Delta_{\bs \rho,1}(t)$, instead of $\Delta_{\bs \rho,1}(1)$. However, since the Alexander parameter $t$ may be seen as that of universal deformation of 1-dimensional trivial representation, it is natural only to consider the $\SL_2$-deformation parameters. 
\end{rem} 
The description at the end of Subsection \ref{subsec.twisted} yields the following equality. 
\begin{prop} \label{prop.L} 
We have $L_{\bs \rho}$ $\dot{=}$ $ \tau_{\bs \rho} \Delta_{{\bs \rho},0}(1)$ in $\mca{R}$. 
\end{prop}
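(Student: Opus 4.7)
The plan is to reduce Proposition \ref{prop.L} to the general formula relating the Reidemeister torsion to the quotient of twisted Alexander polynomials, evaluated at $t=1$. This formula is essentially already assembled in Subsection \ref{subsec.twisted}; here we only need to verify that it can be transported from ${\rm fr}\mca{R}$ to $\mca{R}$ and that the hypotheses are in force for the universal deformation $\bs{\rho}$.

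First I would check that the general setup of Subsection \ref{subsec.twisted} is available for the pair $(\pi_n,\bs{\rho})$. By Proposition \ref{prop.univdefo}, $\mca{R}$ is either $O[[x-\alpha]]$ or $O[[y-\beta]]$, which are Noetherian UFDs of characteristic zero, so that divisorial hulls and hence orders are well-defined. By the proposition immediately preceding \ref{prop.L}, the homology $H_1(\pi_n,\bs{\rho})$ is a finitely generated torsion $\mca{R}$-module; the same reasoning (choosing a presentation of $\pi_n$ with deficiency one and using the Lyndon complex of Subsection \ref{subsec.twisted}) shows that $H_0(\pi_n,\bs{\rho})$ is also a finitely generated torsion $\mca{R}$-module, hence $\Delta_{\bs{\rho},0}(1)\in\mca{R}$ is defined and nonzero in ${\rm fr}\mca{R}$. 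In particular, the twisted complex $C_{\bs{\rho}\otimes\alpha,*}$ is acyclic after tensoring with ${\rm fr}\mca{R}[t^\Z]$.

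Next I would invoke Lemma \ref{lem.tau-ord}, applied to the complex $C_{\bs{\rho}\otimes\alpha,*}$, to obtain
\[
\tau_{\bs{\rho}}(t)\ \dot{=}\ \Delta_{\bs{\rho},1}(t)\,/\,\Delta_{\bs{\rho},0}(t)
\qquad\text{in}\ {\rm fr}\mca{R}[t^\Z]/\mca{R}[t^\Z]^*,
\]
exactly as in the last displayed equality of Subsection \ref{subsec.twisted}. Specializing the chain-level formula for $\tau_{\bs{\rho}\otimes\alpha}$ in Lemma \ref{lem.tau-det} at $t=1$ and comparing with the corresponding formula for $\tau_{\bs{\rho}}(X)$ gives $\tau_{\bs{\rho}}(1)=\tau_{\bs{\rho}}(X)$; the discussion after Lemma \ref{lem.tau-det} is stated exactly for this. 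Combining, we obtain
\[
\tau_{\bs{\rho}}\ \dot{=}\ \Delta_{\bs{\rho},1}(1)/\Delta_{\bs{\rho},0}(1)
\qquad\text{in}\ {\rm fr}\mca{R}.
\]
Since $L_{\bs{\rho}}=\Delta_{\bs{\rho},1}(1)$ by the definition recalled in the introduction, clearing denominators yields $L_{\bs{\rho}}\,\dot{=}\,\tau_{\bs{\rho}}\,\Delta_{\bs{\rho},0}(1)$.

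The only genuine issue is that the equality a priori lives in ${\rm fr}\mca{R}$, whereas the proposition asserts it in $\mca{R}$. This is automatic: both $L_{\bs{\rho}}$ and $\tau_{\bs{\rho}}\Delta_{\bs{\rho},0}(1)$ lie in $\mca{R}$ (the first by definition as an order, the second because $\tau_{\bs{\rho}}=\tau_n(x,y_f(x))\in\mca{R}$ by Proposition \ref{prop.tau} and $\Delta_{\bs{\rho},0}(1)\in\mca{R}$ as an order), and two elements of the UFD $\mca{R}$ that agree up to a unit of ${\rm fr}\mca{R}$ agree up to a unit of $\mca{R}$. This step—ensuring $\Delta_{\bs{\rho},0}(1)$ is defined and non-zero, so that the rearrangement is meaningful—is the main point to get right; everything else is bookkeeping from Subsection \ref{subsec.twisted}.
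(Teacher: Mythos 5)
Your proposal is correct and follows essentially the same route as the paper: the paper gives no separate argument for Proposition \ref{prop.L} beyond pointing to the identity $\tau_{\rho}(X)\,\dot{=}\,\Delta_{\rho,1}(1)/\Delta_{\rho,0}(1)$ assembled at the end of Subsection \ref{subsec.twisted}, which combined with $L_{\bs\rho}=\Delta_{\bs\rho,1}(1)$ is exactly your computation, and your verification of the hypotheses (Noetherian UFD, torsion homology, non-vanishing of $\Delta_{\bs\rho,0}(1)$) is sound. One caveat: your closing justification, that two elements of a UFD agreeing up to a unit of ${\rm fr}\mca{R}$ agree up to a unit of $\mca{R}$, is false as a general statement (e.g.\ $2$ and $4$ in $\Z$); it is harmless here only because the relation $\dot{=}$ is by definition equality up to units of $\mca{R}$ (resp.\ $\mca{R}[t^{\Z}]$, with the ambiguity $\pm t^{\Z}\Im\det\bs\rho$ collapsing to $\pm1$ at $t=1$ for $\SL_2$), so the displayed identity in ${\rm fr}\mca{R}$ already carries the unit only from $\mca{R}$ and no further descent argument is needed.
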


Noting in addition that we have $\tau_{\ol{\rho}} = \tau_{\bs \rho} \mod \mf{m}_\mca{R}$ in $F$, 
Lemma \ref{lem.tauk} yields the following assertion. 
\begin{prop} \label{prop.L1} 
The following conditions are equivalent: 
{\rm (i)} $\ol{\rho}$ is acyclic, {\rm (}that is, $\tau_{\rho}\neq 0$,{\rm )} 
{\rm (ii)} $\tau_{\bs \rho} $ $\dot{=}$ $ 1$, 
{\rm (iii)} $L_{\bs \rho} $ $\dot{=}$ $ \Delta_{\bs{\rho}, 0}(1)$. 
If $L_{\bs \rho} $ $\dot{=}$ $ 1$, then $\ol{\rho}$ is acyclic. 
\end{prop}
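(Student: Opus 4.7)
The plan is to derive all three equivalences and the concluding implication from the single identity $L_{\bs \rho} \dot{=} \tau_{\bs \rho}\Delta_{\bs \rho, 0}(1)$ of Proposition \ref{prop.L}, combined with the explicit reduction $\tau_{\bs \rho} \bmod \mf{m}_\mca{R} = \tau_{\ol \rho}$ recorded just above the statement (which follows from Proposition \ref{prop.tau} by sending $x \mapsto \ol\alpha$, $y_f(x) \mapsto \ol\beta$, or symmetrically). Throughout I use that $\mca{R}$ is a Noetherian complete local UFD, either $O[[x-\alpha]]$ or $O[[y-\beta]]$ by Proposition \ref{prop.univdefo}, with residue field $F$. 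The key local-algebraic fact is that an element $r \in \mca{R}$ satisfies $r \dot{=} 1$ if and only if its reduction modulo $\mf{m}_\mca{R}$ is a nonzero element of $F$.

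For (i) $\Leftrightarrow$ (ii), I apply the criterion above to $r = \tau_{\bs \rho}$: the condition $\tau_{\bs \rho} \dot{=} 1$ is equivalent to $\tau_{\ol \rho} = \tau_{\bs \rho} \bmod \mf{m}_\mca{R}$ being nonzero in $F$, which is by definition the acyclicity of $\ol \rho$. For the direction (ii) $\Rightarrow$ (iii), substituting $\tau_{\bs \rho} \dot{=} 1$ into the identity of Proposition \ref{prop.L} yields $L_{\bs \rho} \dot{=} \Delta_{\bs \rho, 0}(1)$ immediately.

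The main obstacle lies in the converse (iii) $\Rightarrow$ (ii), where one must cancel $\Delta_{\bs \rho, 0}(1)$ from $\tau_{\bs \rho} \Delta_{\bs \rho, 0}(1) \dot{=} \Delta_{\bs \rho, 0}(1)$ inside the domain $\mca{R}$. This cancellation is legitimate precisely when $\Delta_{\bs \rho, 0}(1) \neq 0$ in $\mca{R}$. I would verify this by reducing modulo $\mf{m}_\mca{R}$ and identifying $H_0(\pi_n, \ol\rho)$ as the coinvariant module $V_{\ol \rho}/\sum (g-1)V_{\ol \rho}$; since $\ol \rho$ is absolutely irreducible, a standard argument shows this module vanishes, so $\Delta_{\bs \rho, 0}(1) \bmod \mf{m}_\mca{R}$ is a unit in $F$, forcing $\Delta_{\bs \rho, 0}(1)$ to be a unit in $\mca{R}$ (and in particular nonzero). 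This is consistent with the author's remark in the introduction that $\Delta_{\bs \rho, 0} \dot{=} 1$ in most cases.

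Finally, for the concluding implication, assume $L_{\bs \rho} \dot{=} 1$, so that $L_{\bs \rho}$ is a unit in $\mca{R}$. Since $\mca{R}$ is a domain and $L_{\bs \rho} \dot{=} \tau_{\bs \rho} \Delta_{\bs \rho, 0}(1)$, both factors on the right must be units. In particular $\tau_{\bs \rho} \dot{=} 1$, so (ii) holds, and by the equivalence (i) $\Leftrightarrow$ (ii) already proved, $\ol \rho$ is acyclic.
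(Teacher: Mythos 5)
Your argument follows essentially the same route as the paper's, which disposes of this proposition in one line by combining Proposition \ref{prop.L} with the reduction $\tau_{\ol\rho}=\tau_{\bs\rho}\bmod \mf{m}_{\mca{R}}$ and the fact that an element of the local ring $\mca{R}$ is a unit iff its residue in $F$ is nonzero; your treatment of (i)$\Leftrightarrow$(ii), of (ii)$\Rightarrow$(iii), and of the final implication is exactly this. The one sub-step I would push back on is your justification of the cancellation in (iii)$\Rightarrow$(ii). You argue that $\Delta_{\bs\rho,0}(1)$ is a \emph{unit} because the coinvariant module $H_0(\pi_n,\ol\rho)$ vanishes for absolutely irreducible $\ol\rho$. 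But in the paper's usage $\Delta_{\bs\rho,0}(1)$ is the value at $t=1$ of the order of the $0$-th twisted Alexander module over $\mca{R}[t^{\pm1}]$, and taking orders (GCDs of Fitting-ideal generators) does not commute with the specialization $t\mapsto 1$ nor with reduction mod $\mf{m}_{\mca{R}}$; indeed the paper's own Proposition \ref{prop.L=x-2} computes $\Delta_{\bs\rho,0}(1)$ $\dot{=}$ $x-2$, which is a non-unit precisely when $\tr\ol\rho(a)=2$. So the conclusion ``unit'' is too strong under that convention. Fortunately your cancellation only needs $\Delta_{\bs\rho,0}(1)\neq 0$ in the integral domain $\mca{R}$, and this is immediate from the paper's observation (used in the proof of Proposition \ref{prop.L=tau}) that $\Delta_{\bs\rho,0}(1)$ divides the $2$-minor $2-\tr\bs\rho(a)=2-x\neq 0$. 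With that substitution the proof is complete; your omission of Lemma \ref{lem.tauk}, which the paper cites but which only guarantees $\tau_{\bs\rho}\neq 0$ (not needed for any of the four implications here), is harmless.
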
 

Here is a basic tool to investigate the $L$-functions in $O[[X]]=O[[x-\alpha]]$ or $O[[y-\alpha]]$. %
\begin{lem}[{$p$-adic Weierstrass preparation theorem, \cite[Theorem 5.3.4]{NSW}}] \label{lem.Weierstrass} 
Each $f(X) \in O[[X]]$ may be uniquely written in the form $f(X)=p^r g(X)u(X)$, 
where $r \in \Q_{>0}$, $u(X) \in O[[X]]$ is a unit in $O[[X]]$, and $g(X)=X^s+(\text{lower\ terms}) \in O[X]$ is a \emph{Weierstrass polynomial}, that is, a polynomial satisfying $g(X)-X^s\in \mf{m}_O[X]$. 
\end{lem}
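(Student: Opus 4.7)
The plan is to establish uniqueness and existence separately, with the bulk of the work concentrated in a Weierstrass division theorem.

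For uniqueness, suppose $f=p^rgu=p^{r'}g'u'$. The reduction of any unit $u\in O[[X]]^*$ has unit constant term, while $\ol{g}=X^s$ in $F[[X]]$ for every Weierstrass polynomial $g$ of degree $s$. Reading off the minimum $p$-adic valuation of the coefficients of $f$ and the smallest index at which this minimum is attained recovers both $r$ and $s:=\deg g$ from $f$ alone, so $r=r'$ and $\deg g=\deg g'$. Cancelling $p^r$ in the integral domain $O[[X]]$, the equation $gu=g'u'$ between monic polynomials of equal degree multiplied by units then forces $g=g'$ and $u=u'$.

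For existence, first factor out the largest power of the uniformizer common to all coefficients to write $f=p^rf_0$ with at least one coefficient of $f_0$ a unit of $O$. Let $s$ be the smallest index where this occurs, so that $\ol{f_0}(X)=X^s\ol{v}(X)$ in $F[[X]]$ with $\ol{v}(0)\neq 0$. The key technical step I would invoke is the Weierstrass division theorem: for every $H\in O[[X]]$ there exist unique $Q\in O[[X]]$ and $R\in O[X]$ with $\deg R<s$ such that $H=Qf_0+R$. Granted this, applying it to $H=X^s$ gives $X^s=Qf_0+R$. Reducing modulo $\mf{m}_O$ yields $X^s=\ol{Q}(X)\cdot X^s\ol{v}(X)+\ol{R}(X)$ with $\deg \ol{R}<s$, which upon comparing coefficients forces $\ol{R}=0$ and $\ol{Q}(0)=\ol{v}(0)^{-1}\neq 0$. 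Hence $R\in\mf{m}_O[X]$, so $g:=X^s-R$ is a Weierstrass polynomial, $Q$ is a unit in $O[[X]]$, and $f_0=Q^{-1}g$; setting $u:=Q^{-1}$ completes the factorization.

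The main obstacle is the Weierstrass division theorem itself. The cleanest route is to view $T\colon O[[X]]\oplus O[X]_{<s}\to O[[X]]$, $(Q,R)\mapsto Qf_0+R$, as an $O$-linear map and to show it is bijective. Modulo $\mf{m}_O$ this reduces to the tautological fact that every series in $F[[X]]$ has a unique expression as a multiple of $X^s\ol{v}(X)$ plus a polynomial of degree $<s$, obtained by solving for the coefficients of $\ol{Q}$ recursively from those of $\ol{H}$ using that $\ol{v}(0)$ is invertible. To lift to $O[[X]]$ one iterates: starting from lifts $Q_0,R_0$ of the $F$-solution, the error $H-Q_0f_0-R_0$ lies in $\mf{m}_OO[[X]]$, and successive corrections produce Cauchy sequences in the $\mf{m}_O$-adic topology whose limits in the complete $O$-modules $O[[X]]$ and $O[X]_{<s}$ give the desired $Q$ and $R$; uniqueness follows from injectivity of the reduction map together with an $\mf{m}_O$-adic induction.
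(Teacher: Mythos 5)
Your proof is correct, but note that the paper does not prove this lemma at all: it is quoted as a known theorem from the reference \cite[Theorem 5.3.4]{NSW}, so there is no in-paper argument to compare against. What you have written is the standard proof of $p$-adic Weierstrass preparation --- reduce to the Weierstrass division theorem, prove division by first solving the problem over the residue field $F[[X]]$ (where it is immediate because $\ol{v}(0)$ is invertible) and then lifting by successive $\mf{m}_O$-adic approximation using the $\mf{m}_O$-adic completeness of $O[[X]]$ and of $O[X]_{<s}\cong O^s$ --- and this is essentially the argument in the cited reference, so nothing is gained or lost relative to the source. Two small points of hygiene: the factorization only makes sense for $f\neq 0$, and the exponent should be read as $r\in\Q_{\geq 0}$ with $p^r$ interpreted as $\pi^{er}$ up to units when $O$ is ramified (your ``factor out the largest power of the uniformizer'' step silently handles this). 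Also, the last step of your uniqueness paragraph --- that $gu=g'u'$ with $g,g'$ monic Weierstrass polynomials of the same degree forces $g=g'$ --- is exactly the uniqueness clause of the division theorem applied to $X^s$ divided by $gu$ (via $X^s=u^{-1}(gu)+(X^s-g)$), so logically it belongs after the division theorem rather than before it; this is a matter of presentation, not of correctness.
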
 

\begin{prop} \label{prop.L=tau} 
If $\ol{\rho}$ is non-acyclic, then we have $\Delta_{{\bs \rho},0}(1) $ $\dot{=}$ $ 1$, hence $L_{\bs \rho}$ $\dot{=}$ $ \tau_\bs{\rho} $ $\dot{\neq}$ $ 1$ holds. 
\end{prop}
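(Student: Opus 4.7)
The plan is to use Proposition \ref{prop.L}, which gives $L_{\bs\rho} \,\dot{=}\, \tau_{\bs\rho}\Delta_{\bs\rho, 0}(1)$, and reduce the assertion to showing $\Delta_{\bs\rho, 0}(1) \,\dot{=}\, 1$ in $\mca{R}$. Once that is established, $L_{\bs\rho} \,\dot{=}\, \tau_{\bs\rho}$ follows, and $\tau_{\bs\rho} \,\dot{\neq}\, 1$ is a consequence of the equivalence of conditions (i) and (ii) of Proposition \ref{prop.L1} applied to the non-acyclic $\ol{\rho}$.

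To bound $\Delta_{\bs\rho, 0}(1)$, I would recall that by construction it is the order of $H_0(X, \bs\rho) \cong H_0(\pi_n, \bs\rho)$, the module of coinvariants of $V_{\bs\rho}$ under the $\pi_n$-action. Since $\pi_n$ is normally generated by the meridian $a$ (every Wirtinger generator is conjugate to $a$), a standard telescoping argument shows that the augmentation ideal acts on $V_{\bs\rho}$ by the $\mca{R}[\pi_n]$-submodule generated by $(\bs\rho(a) - I_2)V_{\bs\rho}$. Hence $H_0(X, \bs\rho)$ is a quotient of $V_{\bs\rho}/(\bs\rho(a) - I_2)V_{\bs\rho}$, whose Fitting ideal is generated by $\det(\bs\rho(a) - I_2) = 2 - x$, where $x = \tr\bs\rho(a) \in \mca{R}$. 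Passing to divisorial hulls yields that $\Delta_{\bs\rho, 0}(1)$ divides $2 - x$ in $\mca{R}$.

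It then suffices to verify that $2 - x$ is a unit in $\mca{R}$. By Proposition \ref{prop.nap}, the residual representation $\ol{\rho}$ corresponds to a root $\ol{\alpha} \in F$ of $k_n$ with $(\ol{\alpha}, \ol{\alpha})$ lying on $f_n = 0$, and Corollary \ref{lem.k2} gives $k_n(2) = (-1)^{n-1}$, nonzero in $F$ since $p > 2$; hence $\ol{\alpha} \neq 2$. In the case $\mca{R} = O[[x - \alpha]]$ for a lift $\alpha \in O$ of $\ol{\alpha}$, the decomposition $2 - x = (2 - \alpha) - (x - \alpha)$ exhibits $2 - x$ with unit constant term $2 - \alpha \in O^\times$, hence a unit in $\mca{R}$. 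In the alternative case $\mca{R} = O[[y - \beta]]$, the universal trace $\tr\bs\rho(a) = x_f(y)$ provided by Proposition \ref{prop.tau} has constant term $\alpha$, so $2 - x_f(y) = (2 - \alpha) - (x_f(y) - \alpha)$ is likewise a unit.

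The subtlest step I foresee is making the divisibility $\Delta_{\bs\rho, 0}(1) \mid 2 - x$ rigorous in this local setting: the surjection $V_{\bs\rho}/(\bs\rho(a) - I_2)V_{\bs\rho} \surj H_0(X, \bs\rho)$ reverses the inclusion of Fitting ideals, and passage to divisorial hulls preserves this containment because $\mca{R}$ is a Noetherian UFD. This yields the desired divisibility and, combined with the unit property of $2 - x$ above, completes the proof.
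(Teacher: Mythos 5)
Your proposal is correct and follows essentially the same route as the paper: reduce via Proposition \ref{prop.L} to showing $\Delta_{\bs\rho,0}(1)$ is a unit, observe that it divides $\det(\bs\rho(a)-I_2)=2-\tr\bs\rho(a)$ (the paper phrases this as the GCD of the $2$-minors of $\der_1=(\bs\rho(a)-I_2,\bs\rho(b)-I_2)$ dividing one of them, which is the same containment of Fitting ideals you use), and conclude that $2-x$ is a unit because $k_n(2)=\pm1$ forces $\ol{\alpha}\neq 2$ by Corollary \ref{lem.k2}. The final step via Proposition \ref{prop.L1} is also exactly the paper's.
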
 

\begin{proof} 
We have $\pi_n=\langle a,b\mid r\rangle$ with $r=aw^nb^{-1}w^{-n}$. 
Put $V_\rho=\mca{R}^2$ and consider the right $\pi_n$-action via the transpose of ${\bs \rho}$. 
The Lyndon exact sequence gives the twisted complex 
$0\to V\overset{\der_2}{\to} V^2 \overset{\der_1}{\to} V\to 0$ with 
$\der_2=(\bs{\rho}(\dfrac{\der r}{\der a}), \bs{\rho}(\dfrac{\der r}{\der b}))$ 
and $\der_1=(\bs{\rho}(a)-I_2, \bs{\rho}(b)-I_2)$ defining $H_i(\pi_n, {\bs \rho})$. 
Note that $\Delta_{{\bs \rho},0}(1)$ is the GCD of the 2-minors of $\der_1$, so that it divides $2-\tr \bs{\rho}(a)=(2-\alpha)-(\tr \bs{\rho}(a)-\alpha)$. 
In the residual field $F$, we have $k_n(\ol{\alpha})=0$, while Corollary\ref{lem.k2} yields $k_n(2)=\pm 1$. 
Hence $\ol{\alpha}\neq 2$, $|2-\alpha|_p\geq 1$ (indeed $=1$), and $2-\tr \bs{\rho}(a)$ is a unit in $\mca{R}$, 
hence so is $\Delta_{\bs{\rho},0}(1)$. 
Now Propositions \ref{prop.L}, \ref{prop.L1} yield $L_{\bs \rho} $ $\dot{=}$ $ \tau_\bs{\rho} $ $\dot{\neq}$ $ 1$. 
\end{proof} 

\begin{prop} \label{prop.L=x-2} 
Suppose that $\ol{\rho}$ is acyclic. Then $L_{\bs \rho}$ is non-trivial if and only if $\tr\ol{\rho}(a)=2$ in $F$ holds.  
In this case, if $\mca{R}=O[[x-\alpha]]$, then we have $L_{\bs \rho}$ $\dot{=}$ $x-2$. 
If instead $\mca{R}=O[[y-\beta]]$, then we have $L_{\bs \rho} $ $\dot{=}$ $ x_f(y)-2$ or $L_{\bs \rho} $ $\dot{=}$ $ \sqrt{x_f(y)-2}$, where the latter holds if and only if we have $\sqrt{x_f(y)-2} \in O[[y-\beta]]$. 
\end{prop}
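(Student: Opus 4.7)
The plan is to invoke Proposition~\ref{prop.L} to write $L_{\bs\rho}\,\dot{=}\,\tau_{\bs\rho}\,\Delta_{\bs\rho,0}(1)$ and then exploit the acyclicity hypothesis. Since $\ol\rho$ is acyclic, Proposition~\ref{prop.Tran} gives $\tau_{\bs\rho}=\tau_n(x,y_f(x))$ (or $\tau_n(x_f(y),y)$) as an element of $\mca{R}$, and its reduction modulo $\mf{m}_\mca{R}$ is $\tau_n(\ol\alpha,\ol\beta)=\tau_{\ol\rho}\neq 0$ in $F$; hence $\tau_{\bs\rho}\in\mca{R}^\times$. This reduces the problem to identifying $\Delta_{\bs\rho,0}(1)$ as an element of $\mca{R}$ up to units.

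Next, mimicking the argument in the proof of Proposition~\ref{prop.L=tau}, $\Delta_{\bs\rho,0}(1)$ is the generator of the divisorial hull of the ideal of $2\times 2$ minors of the Lyndon presentation matrix $(\bs\rho(a)-I\mid\bs\rho(b)-I)$, and one of these minors is $\det(\bs\rho(a)-I)=2-\tr\bs\rho(a)=2-x$ (respectively $2-x_f(y)$ in the $y$-parameter case). Thus $\Delta_{\bs\rho,0}(1)$ divides $2-x$. If $\tr\ol\rho(a)\neq 2$ in $F$, then $2-\ol\alpha\neq 0$, so $2-x$ is a unit in $\mca{R}$ and $\Delta_{\bs\rho,0}(1)\in\mca{R}^\times$, yielding $L_{\bs\rho}\,\dot{=}\,1$; this establishes the ``only if'' direction of the iff.

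For the converse, assume $\tr\ol\rho(a)=2$, so $\ol\alpha=2$ and $(2-x)\in\mf{m}_\mca{R}$. The task becomes to show that the divisorial hull coincides with (and is not strictly finer than) the appropriate principal ideal, i.e., that the remaining 2-minors admit no further common factor with $2-x$. For $\mca{R}=O[[x-\alpha]]$ with $\alpha=2$, one realizes $\bs\rho\otimes_\mca{R}\mca{R}'$ as a conjugate of Riley's explicit representation from Subsection~\ref{subsec.univdefo} over the quadratic ramified extension $\mca{R}'=\mca{R}[T]/(T^2-(x-2))$ (this extension is needed because the eigenvalue $M=(x+\sqrt{x^2-4})/2$ is not defined on $\mca{R}$ when $\ol\alpha=2$), computes all six 2-minors in closed form in terms of $M,u,x$, and descends the resulting ideal back to $\mca{R}$ by the faithful flatness of $\mca{R}'/\mca{R}$. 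The parallel case $\mca{R}=O[[y-\beta]]$ runs the same computation with $x$ replaced by $x_f(y)$; here $x_f(\beta)=\ol\alpha=2$ forces $x_f(y)-2\in\mf{m}_\mca{R}$, and the dichotomy between $L_{\bs\rho}\,\dot{=}\,x_f(y)-2$ and $L_{\bs\rho}\,\dot{=}\,\sqrt{x_f(y)-2}$ mirrors whether the analogous quadratic extension $\mca{R}[\sqrt{x_f(y)-2}]/\mca{R}$ is nontrivial: when $\sqrt{x_f(y)-2}$ already lies in $\mca{R}$ (which happens precisely when $x_f(y)-2$ has even order of vanishing at $\beta$ and its leading coefficient is a square in $O$, as ensured by Hensel's lemma in characteristic $\neq 2$), Riley's representation is already defined over $\mca{R}$ and the minor computation yields the ``half order'' $\sqrt{x_f(y)-2}$; otherwise the ramified extension is needed and one recovers the full $x_f(y)-2$.

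The hard part will be the detailed minor computation in this last step: controlling the divisorial hull precisely, rather than merely bounding it, requires the explicit Riley form together with careful bookkeeping of the Fitting ideal under the ramified extension $\mca{R}'/\mca{R}$, and in the $y$-parameter case the ramification profile of this extension is exactly what discriminates between the two formulas for $L_{\bs\rho}$.
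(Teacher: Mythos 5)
Your proposal is correct and follows essentially the same route as the paper: reduce to $\Delta_{\bs\rho,0}(1)$ using acyclicity of $\ol{\rho}$, read it off as the GCD of the $2$-minors of $(\bs\rho(a)-I_2,\ \bs\rho(b)-I_2)$ computed from Riley's explicit form over the quadratic extension $\mca{R}[\sqrt{x-2}]$, and let the (non-)membership of $\sqrt{x-2}$ (resp.\ $\sqrt{x_f(y)-2}$) in $\mca{R}$ decide between triviality, the full order $x-2$, and the half order. The paper's proof is a terser version of exactly this computation, exhibiting the minor $\bigl(\tfrac{x+\sqrt{x^2-4}}{2}-1\bigr)(-u-1)$ whose $\sqrt{x-2}$-factor forces the GCD in $\mca{R}$ to be $x-2$.
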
 

\begin{proof} 
Suppose $\mca{R}=O[[x-\alpha]]$. 
Recall that $L_{\bs \rho} $ $\dot{=}$ $ \Delta_{{\bs \rho},0}(1)$ and 
that $\Delta_{{\bs \rho},0}(1)$ is the GCD of the 2-minors of $({\bs \rho}(a)-I_2, {\bs \rho}(b)-I_2)$, 
which is calculated to be that of $2-x$ and $(\dfrac{x+\sqrt{x^2-4}}{2}-1)(-u-1)=-\sqrt{x-2}\dfrac{1+\sqrt{x+2}}{2}(u+1)$ in $O[[x-\alpha]]$. Since $\sqrt{x-2}\not\in F[[x-2]]$, we have $\sqrt{x-2}\not\in O[[x-\alpha]]$, hence $L_{\bs \rho}$ $\dot{=}$ $x-2$ in $O[[x-\alpha]]$. 
We have $x-2 $ $\dot{\neq}$ $ 1$ if and only if $\tr \ol{\rho}(a)=2$ in $F$ holds. 
If instead $\mca{R}=O[[y-\beta]]$, then by regarding $x=x_f(y)$, a similar argument yields the assertion. 
\end{proof} 

\begin{rem} 
A lift $\rho_{\rm hol}^{\pm}:\pi_n\to \SL_2(\C)$ of the holonomy representation may be regarded as that over the ring of $S$-integers of a number field. As proved by W.~P.~Thurston \cite{ThurstonGT}, such $\rho_{\rm hol}^{\pm}$ is parabolic, that is, we have $\tr\rho(a)=2$. 
We wonder if we always have $L_{\bs \rho}=x-2$ or $y-\beta$ for the residual representations of $\rho_{\rm hol}^{\pm}$ (cf.~Example \ref{eg.L=x-2}). 
\end{rem} 

By Propositions \ref{prop.L1}, \ref{prop.L=tau}, \ref{prop.L=x-2}, all residual representations with non-trivial $L$-functions are determined. 
The following lemma connects Theorem \ref{thm.tangent} (the common tangent property) to the property of the $L$-functions. 

\begin{lem} \label{lem.order2} 
Let $\alpha\in O$ be a root of $k_n$ with the image $\ol{\alpha}$ in $F$ and 
let $\ol{\rho}$ be a residual representation at $(\ol{\alpha},\ol{\alpha})$. 
Suppose $\dfrac{\der f_n}{\der y}(\ol{\alpha},\ol{\alpha})\neq 0$, so that 
Hensel's lemma yields the implicit function $y_f(x)\in O[[x-\alpha]]$ and 
there is a universal deformation ${\rm \rho}:\pi_n\to \SL_2(O[[x-\alpha]])$. 
Then we have $L_{\bs \rho}=(x-\alpha)^2\times (\text{other\ divisors})$.
In addition, $L_{\bs \rho}$ is monic, that is, $L_{\bs \rho} $ $\dot{=}$ $ g$ holds for some Weierstrass polynomial $g$. 

Suppose instead $\dfrac{\der f_n}{\der y}(\ol{\alpha},\ol{\alpha})\neq 0$. Then the similar holds for $L_{\bs \rho} \in O[[y-\alpha]]$. 
\end{lem}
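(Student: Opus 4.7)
The plan is to combine the identifications $L_{\bs \rho} \dot{=} \tau_{\bs \rho}$ (Proposition \ref{prop.L=tau}) and $\tau_{\bs \rho} = \tau_n(x, y_f(x))$ (Proposition \ref{prop.tau}) with the common-tangent vanishing of Corollary \ref{cor.d2} to extract the factor $(x-\alpha)^2$, and then to invoke the Weierstrass preparation theorem to obtain monic-ness.

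First, since $\alpha$ is a root of $k_n$ and $k_n \mid f_n(x,x)$ and $k_n \mid \tau_n(x,x)$ by Proposition \ref{prop.k1}, we have $f_n(\alpha,\alpha) = \tau_n(\alpha,\alpha) = 0$, so by uniqueness of the Hensel lift the implicit function $y_f(x) \in O[[x-\alpha]]$ satisfies $y_f(\alpha) = \alpha$. The residual representation $\ol{\rho}$ at $(\ol{\alpha},\ol{\alpha})$ is non-acyclic since $\tau_{\ol{\rho}} = \tau_n(\ol{\alpha},\ol{\alpha}) = 0$, so Propositions \ref{prop.L=tau} and \ref{prop.tau} combine to give $L_{\bs \rho} \dot{=} \tau_{\bs \rho} = \tau_n(x,y_f(x))$ in $O[[x-\alpha]]$.

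Set $\tau(x) := \tau_n(x,y_f(x))$. Then $\tau(\alpha) = 0$, and by the chain rule
\[\tau'(\alpha) = \frac{\partial \tau_n}{\partial y}(\alpha,\alpha)\cdot\bigl(y_f'(\alpha) - y_\tau'(\alpha)\bigr) = 0\]
exactly as in the proof of Corollary \ref{cor.d2}; this is a rational identity in $\alpha$ under the hypotheses $k_n(\alpha) = 0$ and $\frac{\partial f_n}{\partial y}(\ol{\alpha},\ol{\alpha}) \neq 0$, whose denominator $n\alpha^2 - 2n\alpha - 1$ is a unit of $O$ since its residue is nonzero in $F$, so the identity holds in the characteristic-zero ring $O$ just as it did over $\C$. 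Consequently both the constant and linear coefficients of the power series $\tau(x) \in O[[x-\alpha]]$ vanish, yielding $\tau(x) = (x-\alpha)^2 h(x)$ for some $h \in O[[x-\alpha]]$; in particular $(x-\alpha)^2 \mid L_{\bs \rho}$.

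For the monic-ness, I will verify that $\tau(x)$ is not divisible by the uniformizer of $O$. Reducing modulo $\mf{m}_O$ gives $\ol{\tau}(x) = \tau_n(x,\ol{y}_f(x)) \in F[[x-\ol{\alpha}]]$, which by Lemma \ref{lem.tauk} is not the zero series. The Weierstrass preparation theorem (Lemma \ref{lem.Weierstrass}) then yields $\tau(x) \dot{=} g(x)$ for some Weierstrass polynomial $g$, whence $L_{\bs \rho} \dot{=} g(x)$ is monic. The symmetric case $\frac{\partial f_n}{\partial x}(\ol{\alpha},\ol{\alpha}) \neq 0$ proceeds identically with $x_f(y) \in O[[y-\alpha]]$ in place of $y_f(x)$ and the $y$-variable analogue of Corollary \ref{cor.d2}. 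The only mildly delicate step is lifting the vanishing $\tau'(\alpha) = 0$ from $\C$ to $O$; this is purely formal once one observes that the denominators appearing in the $\C$-computation are units of $O$ under our non-degeneracy hypothesis.
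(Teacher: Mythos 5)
Your proposal is correct and follows essentially the same route as the paper: identify $L_{\bs \rho}$ with $\tau_n(x,y_f(x))$ via Propositions \ref{prop.L=tau} and \ref{prop.tau}, transfer the common-tangent vanishing of Corollary \ref{cor.d2} from $\C$ to $O[[x-\alpha]]$ (justified because the relevant denominators are units), and deduce monic-ness from $\ol{\tau}\neq 0$ (Lemma \ref{lem.tauk}) together with Weierstrass preparation. The only cosmetic difference is that the paper also records the explicit nonvanishing value of the second derivative at $x=\alpha$, which is not needed for this lemma but is reused in Theorem \ref{thm.L.precise}.
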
  

\begin{proof} Suppose $\dfrac{\der f_n}{\der y}(\ol{\alpha},\ol{\alpha})\neq 0$. 
By Propositions \ref{prop.nap}, \ref{prop.tau}, \ref{prop.L=tau}, 
$\ol{\rho}$ is non-acyclic and we have $L_{\bs \rho}=\tau_n(x,y_f(x))$. 
Since the partial derivatives of $f_n(x,y)$ and $\tau_n(x,y)$ are polynomials in $\Z[x,y]$, 
the calculations in Subsection \ref{subsec.tangent} and especially Corollary \ref{cor.d2} still hold 
for $y_f(x) \in O[[x-\alpha]]$. 
Namely, if $\dfrac{\der f_n}{\der y}(\ol{\alpha},\ol{\alpha})=\dfrac{2(n\ol{\alpha}^2-2n\ol{\alpha}-1)}{(\ol{\alpha}+1)\ol{\alpha}(\ol{\alpha}-2)(\ol{\alpha}-3)}\neq 0$, then $L_{\bs \rho}\in O[[x-\alpha]]$ satisfies 
$$L_{\bs \rho}|_{x=\alpha}=\dfrac{d L_{\bs \rho}}{dx}|_{x=\alpha}=0, 
\dfrac{d^2L_{\bs \rho}}{dx^2}|_{x=\alpha}=\dfrac{2(3n-1)^2}{(\alpha-2)(n\alpha^2-2n\alpha-1)^2}\neq 0.$$ 

Now recall Lemma \ref{lem.tauk}. Since $\ol{y}_f(x) \in F[[x-\alpha]]$ is the image of $y_f(x)\in O[[x-\alpha]]$, 
$\ol{\tau}=\tau_n(x,\ol{y}_f(x)) \in F[[x-\alpha]]$ is the image of $L_{\bs \rho}=\tau_n(x,y_f(x))$. 
Since $\ol{\tau}\neq 0$, $L_{\bs \rho}$ is not divisible by $p$, that is, $L_{\bs \rho}$ is monic in $O[[x-\alpha]]$.

If instead $\dfrac{\der f_n}{\der x}(\ol{\alpha},\ol{\alpha})=\dfrac{2((2n-1)\ol{\alpha}^2+(-4n+2)\ol{\alpha}+1)}{(\ol{\alpha}+1)\ol{\alpha}(\ol{\alpha}-2)(\ol{\alpha}-3)}\neq 0$, then $L_{\bs \rho}\in O[[y-\alpha]]$ satisfies 
$$L_{\bs \rho}|_{y=\alpha}=\dfrac{d L_{\bs \rho}}{dy}|_{y=\alpha}=0, 
\dfrac{d^2L_{\bs \rho}}{dy^2}|_{y=\alpha}=\dfrac{2(3n-1)^2}{(\alpha-2)((2n-1)\alpha^2+(-4n+2)\alpha+1)^2}\neq 0.$$
A similar argument yields that $L_{\bs \rho}$ is monic in $O[[y-\alpha]]$. 
\end{proof} 

The following theorem is a full version of Theorem E, determining the $L$-function of all non-acyclic $\ol{\rho}$'s. 

\begin{thm} \label{thm.L.precise}
Let $F$ be a finite field with characteristic $p>2$ and let $O$ be a CDVR with the residue field $F$. 
Let $\ol{\rho}:\pi_n\to\SL_2(F)$ be an absolutely irreducible non-acyclic representation 
corresponding to a root $\ol{\alpha}$ of $k_n$ in $F$ 
and suppose that $O$ contains a root $\alpha$ of $k_n$ such that $\alpha$ is a lift of $\ol{\alpha}$. 

If $\dfrac{\der f_n}{\der y}(\ol{\alpha},\ol{\alpha})\neq 0$, so that $\ol{\rho}$ admits a universal deformation $\bs \rho: \pi_n \to \SL_2(O[[x-\alpha]])$ over $O$, 
then the $L$-function satisfies $L_{\bs \rho} $ $\dot{=}$ $ k_n(x)^2$ in $O[[x-\alpha]]$. 
If $p\nmid 3n-1$, then $\ol{\alpha}$ is always a single root of $k_n$ and $L_{\bs \rho} $ $\dot{=}$ $ (x-\alpha)^2$ holds. 
If $p\mid 3n-1$, then the multiplicity of $\ol{\alpha}$ may be $m>1$, so that there are exactly $m$ distinct lifts $\alpha_1=\alpha,\cdots,\alpha_m$ of $\ol{\alpha}$ that are roots of $k_n$ in an extension of $O$, 
and $L_{\bs \rho} $ $\dot{=}$ $ \prod_{1\leq i\leq m} (x-\alpha_i)^2$ holds. 

If instead $\dfrac{\der f_n}{\der x}(\ol{\alpha},\ol{\alpha})\neq 0$ so that 
$\ol{\rho}$ admits a universal deformation $\bs \rho: \pi_n \to \SL_2(O[[y-\alpha]])$, 
then the $L$-function satisfies $L_{\bs \rho} $ $\dot{=}$ $ k_n(y)^2$ in $O[[y-\alpha]]$, and the similar formulas hold. 
If both of $\dfrac{\der f_n}{\der x}(\ol{\alpha},\ol{\alpha})\neq 0$ and $\dfrac{\der f_n}{\der y}(\ol{\alpha},\ol{\alpha})\neq 0$ hold, 
then the $L$-functions in $O[[x-\alpha]]$ and $O[[y-\beta]]$ coincide via $x=y$. 
\end{thm}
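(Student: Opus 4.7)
The plan is to reduce the computation to identifying $L_{\bs \rho}$ with the power series $\tau_n(x,y_f(x))$ and then to extract its factorisation by $p$-adic Weierstrass preparation together with the common tangent property of Theorem \ref{thm.tangent}. I would first treat the case $\dfrac{\der f_n}{\der y}(\ol{\alpha},\ol{\alpha})\neq 0$; the case $\dfrac{\der f_n}{\der x}(\ol{\alpha},\ol{\alpha})\neq 0$ is symmetric, and when both partials are nonzero the inverse function $x_f$ induces an $O$-algebra isomorphism $O[[x-\alpha]]\cong O[[y-\alpha]]$ that identifies the two expressions for $L_{\bs \rho}$. By Proposition \ref{prop.univdefo} a universal deformation exists over $\mca{R}=O[[x-\alpha]]$, and Propositions \ref{prop.L=tau} and \ref{prop.tau} give $L_{\bs \rho}\,\dot{=}\,\tau_{\bs \rho}=\tau_n(x,y_f(x))$ in $\mca{R}$.

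Next I would invoke Lemma \ref{lem.order2}, whose proof runs through Corollary \ref{cor.d2}, to conclude that $L_{\bs \rho}$ is not divisible by $p$. Applying Lemma \ref{lem.Weierstrass} then yields $L_{\bs \rho}\,\dot{=}\,g(x)$ for a Weierstrass polynomial $g(x)\in O[x]$ whose degree equals the number of roots of $L_{\bs \rho}$ with multiplicity in an algebraic closure of $\mathrm{fr}\,O$, restricted to those elements reducing to $\ol{\alpha}$ modulo the maximal ideal. Let $\alpha_1=\alpha,\alpha_2,\ldots,\alpha_m$ be the distinct roots of $k_n$ lifting $\ol{\alpha}$. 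For any root $\xi$ of $L_{\bs \rho}$ reducing to $\ol{\alpha}$, the pair $(\xi,y_f(\xi))$ is a common zero of $f_n$ and $\tau_n$, so Theorem \ref{thm.na} applied over the characteristic-zero algebraic closure forces $y_f(\xi)=\xi$ and $k_n(\xi)=0$, hence $\xi\in\{\alpha_i\}$. Conversely each $\alpha_i$ is a root of $L_{\bs \rho}$ by Hensel uniqueness of $y_f$ after enlarging $O$ to contain $\alpha_i$.

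To pin down multiplicities I would transport Corollary \ref{cor.d2} to each $\alpha_i$: the rational formulas there depend polynomially on $\alpha$ and their denominators remain nonzero under $\alpha_i\neq -1,0,2,3$, a condition secured by the residue-field analogue of Lemma \ref{lem.neq} used in Proposition \ref{prop.nap}. Hence $L_{\bs \rho}$ vanishes to order exactly two at each $\alpha_i$, giving $\deg g=2m$ and $g(x)\,\dot{=}\,\prod_i(x-\alpha_i)^2$. Since the remaining roots of $k_n\in O[x]$ do not reduce to $\ol{\alpha}$, their corresponding linear factors become units in $O[[x-\alpha]]$, so $k_n(x)\,\dot{=}\,\prod_i(x-\alpha_i)$ in $\mca{R}$, whence $L_{\bs \rho}\,\dot{=}\,k_n(x)^2$. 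If $p\nmid 3n-1$ the roots listed in Proposition \ref{prop.roots} remain distinct modulo $p$, so $m=1$ and $L_{\bs \rho}\,\dot{=}\,(x-\alpha)^2$.

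The main obstacle I foresee is ensuring that Corollary \ref{cor.d2} and Lemma \ref{lem.order2} transport faithfully to each $\alpha_i\neq\alpha$ lying in a possibly ramified extension of $O$. One must enlarge $O$ to a CDVR $O'$ containing all $\alpha_i$, check that the Hensel implicit function obtained over $O'[[x-\alpha_i]]$ agrees with the restriction of the original $y_f$ after reindexing the base point, and verify that each denominator appearing in the derivative formulas remains a unit at $\alpha_i$. Once this bookkeeping is handled, the root-and-multiplicity identification above combined with Weierstrass preparation completes the proof.
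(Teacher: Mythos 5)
Your proposal is correct and follows essentially the same route as the paper's own proof: identify $L_{\bs \rho}$ with $\tau_n(x,y_f(x))$, use $p$-adic Weierstrass preparation together with Corollary \ref{cor.tauk} to locate the roots among the lifts $\alpha_i$ of $\ol{\alpha}$, and use the common tangent property (Lemma \ref{lem.order2}, Corollary \ref{cor.d2}) to get multiplicity exactly two at each $\alpha_i$. The ``bookkeeping'' you flag about transporting the implicit function to each $\alpha_i$ is handled in the paper by the observation that $O[[x-\alpha]]=O[[x-\alpha_i]]$ and a Newton's-method uniqueness argument, exactly as you anticipate.
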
 

\begin{proof} 
If $\ol{\alpha}$ is a root of $k_n$ in $F$ with multiplicity $m$, then by taking a larger $O$ if necessary, 
we may assume that it lifts to distinct $m$ roots $\alpha_1=\alpha,\cdots, \alpha_m$ of $k_n$ in $O$.  
Suppose $\dfrac{\der f_n}{\der y}(\ol{\alpha},\ol{\alpha})\neq 0$, so that Hensel's lemma yields the implicit function $y_{f,i}(x)\in O[[x-\alpha_i]]$ around each lift $(\alpha_i,\alpha_i)$. 
We have $|(x-\alpha_i)-(x-\alpha)|_p<1$, hence $O[[x-\alpha]]=O[[x-\alpha_i]]$ for each $i$. 
A standard argument of Newton's method (cf.~\cite[Theorem 7.3]{EisenbudGTM}) 
yields that $y_{f,i}(x)$'s are the same elements in $O[[x-\alpha]]$. 
By Lemma \ref{lem.Weierstrass}, $L_{\bs \rho}=\tau_n(x,y_f(x)) \in[[x-\alpha]]$ vanishes only at roots $\beta$ of $k_n$ with $|\beta-\alpha|_p<1$, that is, $\beta=\alpha_1,\cdots, \alpha_m$. 
By Lemma \ref{lem.order2}, we obtain $L_{\bs \rho} $ $\dot{=}$ $ \prod_i (x-\alpha_i)^2 $ $\dot{=}$ $ k_n(x)^2$. 
The case with $\dfrac{\der f_n}{\der x}(\ol{\alpha},\ol{\alpha})\neq 0$ may be treated similarly. 
\end{proof} 

\begin{rem} \label{rem.final} (1) 
As mentioned in the introduction, there is a remarkable analogy between Thurston's hyperbolic deformation theory and Hida--Mazur's Galois deformation theory. 
Although non-acyclic representations factor through exceptional Dehn surgeries (Remark \ref{rem.Dehn} (2)), our representations are still in an analogy with $p$-ordinary representations in the sense of \cite{MorishitaTerashima2007}. 
Indeed, since the peripheral subgroup $D$ of a knot group is abelian, the restriction $\rho|_D$ of a knot group representation is always equivalent to $\spmx{\chi_\rho &\ast\\ 0&\chi_\rho^{-1}}$ for some character $\chi_\rho$. 
Thus our work lives in the complement of the previous scope in an extended whole picture, that would be described in future. 

(2) 
For a lift of the holonomy representation, we have a geometric analogue of Iwasawa main conjecture $R_X(z,\rho)=A_\rho(z)^2$ in $\C[[z]]$ due to Sugiyama \cite{Sugiyama2007, Sugiyama2009ASPM}, where $R_X(z,\rho)$ is the Ruelle--Selberg zeta function of geodesics in the analytic side and $A_\rho(z)$ is the Alexander invariant in the algebraic side. 
His argument is based on Fried's work \cite{Fried1986Invent}.  
We wonder how it would be like if we establish its analogue in our situation of non-acyclic representations with use of our algebraic $L$-function. 
\end{rem}

\subsection{Examples} \label{subsec.eg} 
We exhibit examples for Propositions \ref{prop.bijection}, \ref{prop.L=x-2}, and Theorem \ref{thm.L.precise}. 

\begin{eg} 
Let $x,y \in \F_p$ with $f_n(x,y)=0$ and put $u=x^2-y-2$, $v=\sqrt{1-\dfrac{x^2-4}{u}}$. 
Then Riley's representation of $\pi_n$ over a quadratic extension $\F_{p^2}$ of $\F_p$ is given by 
$$\rho^R(a)=\spmx{\dfrac{x+\sqrt{x^2-4}}{2}& 1\\0&\dfrac{x-\sqrt{x^2-4}}{2}}, \ \ \rho^R(b)=\spmx{\dfrac{x+\sqrt{x^2-4}}{2}& 0\\ -u&\dfrac{x-\sqrt{x^2-4}}{2}}.$$ 
We have $\sqrt{x^2-4} \in \F_p$ if and only if ${\rm Im}\rho^R\subset \SL_2(\F_{p})$. 

The third author's representation ${\bs \rho}^U$ over $\F_{p^2}$ is given by 
$$\rho^U(a)=\spmx{\dfrac{\,x\,}{2}& 1\\ \dfrac{x^2-4}{4}&\dfrac{\,x\,}{2}}, \ \ 
\rho^U(b)=\spmx{\dfrac{\,x\,}{2}& \dfrac{-(1-v)^2u}{x^2-4}\\ \dfrac{-(1+v)^2u}{x^2-4}&\dfrac{\,x\,}{2}}.$$ 
Suppose in addition $x=y$. Then we have $\sqrt{x-2} \in \F_p$ if and only if ${\rm Im}\rho^U\subset \SL_2(\F_p)$. 

The True/False of $\sqrt{x^2-4} \in \F_p$ and $\sqrt{x-2} \in \F_p$ coincide if and only if $\sqrt{x+2}\in \F_p$. 
\end{eg}

\begin{eg} \label{eg.L=x-2}
When $n=-1$, we have $f_{-1}(x,y)=y^2-(x^2+1)y+2x^2-1$, $\dfrac{\der f_{-1}}{{\der y}}(x,y)=2y-(x^2+1)$, 
and $\dfrac{\der f_{-1}}{{\der x}}(x,y)=-2xy+4x=2x(2-y)$. 
Put $\beta=\dfrac{5+\sqrt{14}}{2}$, so that $\beta$ is a root of $f_{-1}(2,y)=y^2-5y+7$. 
Then $(2,\beta)$ corresponds to a lift $\rho_{\rm hol}^+$ of holonomy representation of the figure-eight knot $J(2,-2)$. 
Let $\ol{\rho}$ denote the residual representation of $\rho_{\rm hol}^+$ over $F$ with ${\rm char}=p>2$. 
Note that we have $\dfrac{\der f_{-1}}{\der y}(2,\beta) =\dfrac{\sqrt{-14}}{2}$ and $\dfrac{\der f_{-1}}{\der x}(2,\beta)=-2-\sqrt{-14}=\dfrac{18}{2-\sqrt{-14}}$. 
If $p\neq 7$, then we have $L_{\bs \rho}$ $\dot{=}$ $\Delta_{{\bs \rho},0}(1)$ $\dot{=}$ $x-2$ in $O[[x-2]]$. 
If $p\neq 3$, then we instead have 
$L_{\bs \rho}$ $\dot{=}$ $x_f(y)-2=\sqrt{\dfrac{y^2-y-1}{y-2}}-2$ $\dot{=}$ $y-\beta$ in $O[[y-\beta]]$.
\end{eg}

\begin{eg} 
Here we list for $n=-3,-2,-1,2,3$ all roots of $k_n$ in $\F_p$ for every $p\leq 41$ admitting a root with $|x|\leq 5$. 
(See also the table in Subsection \ref{subsec.tables}.)
\begin{multicols}{4}
\noindent 
\begin{tabular}{|l||l|l|} \hline 
$n$&$p$&$x$\\ \hline \hline
$-3$&5&$-1,-2$\\
&11&$-2,-3,4,5$\\
&19&$-3,4,5,6$\\ 
&29&$-4,-5,6,7$\\ 
&41&$-5,-6,7,8$\\ \hline 
\end{tabular}

\begin{tabular}{|l||l|l|} \hline 
$-2$&7&$-1$\\ 
&13&4\\
&29&$-2,-6,12$ \\
&41&$5,28,29$\\ \hline 
$-1$&$p$&1\\ \hline 
\end{tabular} 

\ \ \ \ \begin{tabular}{|l||l|l|} \hline
2&5&$-1$\\
&11&$-2,5$\\
&19&$-3,6$\\
&29&$-4,7$\\
&41&$-5,8$\\ \hline
\end{tabular}

\ \hspace{-7mm}
\begin{tabular}{|l||l|l|} \hline
3&3&1\\
&5&1\\
&7&$1,-2,4$\\
&17&$1,-5,7$\\
&23&$-1,4,6$\\ \hline 
\end{tabular}
\end{multicols}  

Note that $x=-1$ corresponds to an abelian residual representation. 
When $n=-3$ and $p=5$, then we have $k_{-3}=(x+1)^2(x-3)^2$ in $\F_5[x]$. 
Other cases in above are all single roots, hence we have $L_{\bs \rho}$ $\dot{=}$ $(x-\alpha)^2$ in $\Zp[[x-\alpha]]$ or $L_{\bs \rho}$ $\dot{=}$ $(y-\alpha)^2$ in $\Zp[[y-\alpha]]$. 
The list includes \cite[Example 4.5 (3)]{KMTT2018} (i) $n=2$, $p=11$, $x=5$ and (ii) $n=2$, $p=19$, $x=6$. 
\end{eg}

\begin{eg}
(i) 
When $n=-1$, then we have $k_{-1}=x-1$ for any $p$. 
Since $\dfrac{\der f_{-1}}{\der x}(1,1) =2 \neq 0$ and $\dfrac{\der f_{-1}}{\der y}(1,1)=0$ in $\F_p$, 
the implicit function $x_f(y) \in \Zp[[y-1]]$ is defined while $y_f(x) \in \Zp[[x-1]]$ does not, 
yielding a universal deformation $\bs{\rho}:\pi_{-1}\to \SL_2(\Zp[[y-1]])$. 
We have $L_{\bs \rho}$ $\dot{=}$ $(y-1)^2$ in $\Zp[[y-1]]$.  

(ii) When $n=-3,$ $p=11$, then 
we have $k_{-3}=-(x^2-3x+1)(x^2-x-1)=-(x+2)(x-5)(x+3)(x-4)$ in $\F_{11}[x]$. 
If $\ol{\alpha}=-2$. then 
$\dfrac{\der f_n}{\der x}(\ol{\alpha},\ol{\alpha})=2^5 7^2 11=0$,  
$\dfrac{\der f_n}{\der y}(\ol{\alpha},\ol{\alpha})=-23^1 389^1\neq 0$. 
In this case only $x_f(x)$ is defined, and we have $L_{\bs \rho}$ $\dot{=}$ $(x+2)^2$ in $\Z_{11}[[x+2]]$. 
\end{eg} 

\begin{eg} 
If $p\mid 3n-1$ and $n$ is odd, then we have $k_n(3)=\pm \dfrac{3n-1}{2}=0$ in $F$. 

(i) When $n=-3$ and $p=5$, we have $k_{-3}=-(x^2-3x+1)(x^2-x-1)=-(x+1)^2(x-3)^2$ in $\F_5[x]$. 
Put $O=\Z_5[\sqrt{5}]$. Then we have $x^2-x-1=(x+\dfrac{1-\sqrt{5}}{2})(x+\dfrac{1+\sqrt{5}}{2})$ in $O[x]$. 
If we take $\alpha=\dfrac{1-\sqrt{5}}{2}$, then we have $\dfrac{1+\sqrt{5}}{2}-\alpha=\sqrt{5} \in \mf{m}_O=(\sqrt{5})$. 
Since $f_y(3,3)\neq 0$, we have $y_f(x)$ and ${\bs \rho}:\pi_n\to \SL_2(O[[x-\alpha]])$ 
with $L_{\bs \rho}$ $\dot{=}$ $ (x^2-x-1)^2=(x-\alpha)^2(x-\alpha+\sqrt{5})^2=
(x-\alpha)^4+2\sqrt{5}(x-\alpha)^3+5(x-\alpha)^2$. 
We may also verify the equality $\dfrac{d^2L_{\bs \rho}}{dx^2}|_{x=\alpha}=5$ by 
substituting the value into the formula $\dfrac{d^2 \tau_n(x,y_f(x))}{dx^2}|_{x=\alpha}=\dfrac{2(3n-1)^2}{(\alpha-2)(n\alpha^2-2n\alpha-1)^2}$ 
in the proof of Lemma \ref{lem.order2}.

(ii) Similarly, when $n=5$ and $p=7$, we have $p\mid 3n-1$ and 
$k_5=(x^3 - 4 x^2 + 3 x + 1) (x^3 - 2 x^2 - x + 1)=(x+1)^3(x-3)^3 \in \F_7[x]$. 
By putting $O=\Z_7[\sqrt[3]{7}]$ and letting $\alpha \in O$ be a root of $k_5$ with the residue $\ol{\alpha}=3$, 
we have $L_{\bs \rho}$ $\dot{=}$ $(x^3-2x^2-x+1)^2$ in $O[[x-\alpha]]$. 
\end{eg} 

\section*{Acknowledgments} 
We would like to express our gratitude to L\'eo B\'enard, Shinya Harada, Teruhisa Kadokami, Tomoki Mihara, Masakazu Teragaito, Yoshikazu Yamaguchi, and anonymous referees for useful comments. 
The second author has been partially supported by a grant from the Simons Foundation (\#354595). 
The third author has been partially supported by JSPS KAKENHI Grant Number JP19K14538.

\bibliographystyle{amsalpha}
\bibliography{TangeTranUeki.arXiv.bbl} 



\end{document}

\section*{Old Introduction} 

In this article, we study certain $\SL_2(\C)$-representations of twist knots, 
based on works of Nagasato and Tran \cite{Tran2016TJM, NagasatoTran2016OJM}.
Let $J(2,2n)$ denote the twist knot with $\pi_n:=\pi_1(S^3-J(2,2n))=\left<a,b \,|\, aw^n=w^nb\right>,$ $w=[a,b^{-1}]=ab^{-1}a^{-1}b$ for a fixed $n\in \Z$. 
The set of conjugacy classes of $\Hom(\pi_n,{\rm SL}_2(\C))$ is parametrized by $x=\tr\rho(a)$, $y=\tr\rho(ab)$, hence we may write $\tr \rho=(x,y)$. We prove the following two assertions.

\begin{itemize}
\item[(I)] If $n\neq 0,1$, then there exists a non-acyclic irreducible representation $\rho:\pi_n\to {\rm SL}_2(\C)$ satisfying $\tr \rho(a)=\tr \rho(ab)$. 

\item[(II)] Every irreducible representation $\rho:\pi_n\to {\rm SL}_2(\C)$ with $\tr \rho(a)=\tr \rho(ab)$ factors through the 3-Dehn surgery. 
\end{itemize}

In the course of proof, we introduce several Chebyshev-like polynomials $f_n(x,y),$ $\tau_n(x,y),$ $g_n,$ $h_n,$ $k_n$, $\mf{d}_{m,n}(x,y)$ and investigate their relationships. 
The polynomial $f_n(x,y)$ defines a component of the character variety. 
A representation $\rho$ is said to be non-acyclic if $H_i(\pi,\rho)=0$ holds for every $i$.
The polynomial $\tau_n(x,y)$ is the Reidemeister torsion in the sense of 
Tran so that $\tau_n(x,y)=0$ iff a representation $\rho$ with $(\tr \rho(a),\tr\rho(ab))=(x,y)$ is non-acyclic. 
We indeed prove $f_n(x,x)=g_nk_n$, $\tau_n(x,x)=h_nk_n$, and $\mf{d}_{n,n}(x,x)-2=(x-2)(x+1)^2f_n(x,x)^2$ to obtain the assersions above.
We further observe in examples that non-acyclic representations $\rho$ correspond to real roots of $k_n$, and that $f_n(x,y)=0$ and $\tau_n(x,y)=0$ have common tangent lines at $\tr\rho$ in $\R^2$. 

We remark that our method is mostly applicable for representations over any domain, 
and indeed (I) is useful in constructing examples of residual representations that admit the universal deformations over a CDVR with non-trivial $L$-functions (cf.~\cite{KMTT2018}). In addition, (II) would suggest the relationship between the Dehn surgeries and the $L$-functions in a view of arithmetic topology. We briefly discuss $L$-functions in the end of this paper.

This article is organized as follows. In Section 1, we prove the assertion (I) on non-acyclic representations. We state the definitions and Theorem 1.1 together with the outline of our proof in Subsection 1.1. We prove (I) in Subsections 1.2--1.4, and attach the table of examples and graphs with some observational remarks in Subsection 1.5.  
In Section 2, we prove the assertion (II) on 3-Dehn surgeries.  We state Theorem 2.1 in Subsection 2.1 and prove it in Subsections 2.2 and 2.3. 
In Section 3, we discuss the $L$-functions of the universal deformations. 
In Subsection 3.1, we recall several twisted invariants over a Noetherian  UFD. 
In Subsection 3.2, we recall the universal deformations and its $L$-functions, refine the method of calculation, and give a remark.